\theoremstyle{definition}
\newtheorem{definition}{Definition}
\newtheorem{theorem}{Theorem}
\newtheorem{lemma}{Lemma}
\newtheorem{corollary}{Corollary}
\newtheorem{remark}{Remark}
\newtheorem{proposition}{Proposition}
\newcommand{\be}{\begin{eqnarray}}
\newcommand{\ee}{\end{eqnarray}}
\newcommand{\bea}{\begin{eqnarray*}}
\newcommand{\eea}{\end{eqnarray*}}
\DeclareMathOperator{\dist}{dist}
\DeclareMathOperator{\GL}{GL}
\newcommand{\tp}{\texttt{T}}
\newcommand{\pX}{\pmb{X}}
\newcommand{\pY}{\pmb{Y}}
\newcommand{\pU}{\pmb{U}}
\newcommand{\pV}{\pmb{V}}
\newcommand{\pZ}{\pmb{Z}}
\newcommand{\pM}{\pmb{M}}
\newcommand{\pQ}{\pmb{Q}}
\newcommand{\pR}{\pmb{R}}
\newcommand{\pH}{\pmb{H}}
\newcommand{\la}{\langle}
\newcommand{\ra}{\rangle}
\newcommand{\mbR}{\mathbb{R}}
\newcommand{\mP}{\mathcal{P}}
\newcommand{\mI}{\mathcal{I}}
\newcommand{\mL}{\mathcal{L}}
\newcommand{\oX}{\overline{\pmb{X}}}
\newcommand{\oY}{\overline{\pmb{Y}}}
\newcommand{\oZ}{\overline{\pmb{Z}}}
\newcommand{\oU}{\overline{\pmb{U}}}
\newcommand{\oV}{\overline{\pmb{V}}}
\newcommand{\oQ}{\overline{\pmb{Q}}}
\newcommand{\pSig}{\pmb{\Sigma}}
\newcommand{\pDe}{\pmb{\Delta}}
\newcommand{\pDeX}{\pmb{\Delta}_{\pmb{X}}}
\newcommand{\pDeY}{\pmb{\Delta}_{\pmb{Y}}}
\newcommand{\oSig}{\overline{\pmb{\Sigma}}}
\begin{document}

\title{Nonconvex Deterministic Matrix Completion by Projected Gradient Descent Methods \footnote{This work was supported in part by the NSAF of China under grant number U21A20426,  National Natural Science Foundation of China under grant numbers 12071426, 11971427, 11901518, and the Project funded by China Postdoctoral Science Foundation 2023M733114.}}
\author{Hang Xu\footnote{School of Physics, Zhejiang University, Hangzhou 310027, P. R. China, E-mail address: hangxu@zju.edu.cn}}
\author{Song Li\footnote{School of Mathematical Science, Zhejiang University, Hangzhou 310027, P. R. China, E-mail address: songli@zju.edu.cn}}
\author{Junhong Lin\footnote{Corresponding author. Center for Data Science, Zhejiang University, Hangzhou 310027, P. R. China, E-mail address: junhong@zju.edu.cn}}
\affil{}		
\renewcommand*{\Affilfont}{\small\it}

\date{}

\maketitle
\begin{abstract}
We study deterministic matrix completion problem, i.e., recovering a low-rank matrix from a few observed entries where the sampling set is chosen as the edge set of a Ramanujan graph.
We first investigate projected gradient descent (PGD) applied to a Burer-Monteiro  least-squares problem and show that it  converges linearly  to the incoherent ground-truth with respect to the condition number $\kappa$ of ground-truth under a benign initialization and large samples.
We next apply the scaled variant of PGD to deal with the ill-conditioned case when $\kappa$ is large, and we show the algorithm converges at a linear rate independent of the condition number $\kappa$ under similar conditions. Finally, we provide numerical experiments  to corroborate our  results.

%
\end{abstract}

{\bf Keywords: }
Deterministic matrix completion, Projected gradient descent,  Matrix decomposition, Ramanujan graph, Local curvature,  Local smoothness.

\section{Introduction}

 Low-rank matrix completion involves filling in missing entries of a low-rank matrix using  its few observed entries. 
 It has gained significant attention in various fields due to its wide range of applications, including
computer vision \cite{CS2004}, recommender systems \cite{KBV2009,zhang2021community}, quantum tomography \cite{GLF2010} and system identification \cite{LV2009}. The field of low-rank matrix completion has seen significant research efforts over the past decade, leading to the development of various algorithms and techniques \cite{CR2009,candes2010matrix,keshavan2010matrix,davenport2016overview,zhu2018global,chi2019nonconvex}.

\subsection{Matrix Completion}

The fundamental goal in matrix completion  is to recover a low-rank matrix  from its few observed entries. Specially,
let $\pmb{M}^* \in \mathbb{R}^{n_1\times n_2}$ be a rank-$r$ ($r<n_1 \wedge n_2$) matrix of interest.  Let $\Omega \subset [n_1] \times [n_2]$ be a (sampling) set of $m$ indices of the observed entries. Here, $[n]$ denotes the set $\{ 1, \dots, n\}$.  The corresponding observation is $\mathcal{P}_{\Omega}(\pmb{M}^*)$, where the sampling operator $\mathcal{P}_{\Omega}: \mathbb{R}^{n_1\times n_2} \to \mathbb{R}^{n_1\times n_2}$ is defined by
\begin{eqnarray}
[\mathcal{P}_{\Omega}(\pmb{X})]_{ij}=\begin{cases} \pmb{X}_{ij}, &(i, j)\in \Omega, \\ 0, &\text{otherwise}. \end{cases} \label{proj}
\end{eqnarray}
With these notations,
the aim is to recover the full matrix $\pmb{M}^*$ from its observation $\mathcal{P}_{\Omega}(\pmb{M}^*)$. 

A natural approach to address the above problem involves finding the matrix with the lowest rank that matches the given matrix $\pM^*$ on $\Omega$. This approach is known as the constrained rank minimization, which  
is NP-hard. 
An alternative method is to consider its convex relaxation,
nuclear norm minimization (NNM) \cite{oymak2011simplified,CR2009, CT2010, R2011}:
\bea
\min_{\pM\in \mbR^{n_1\times n_2}} \| \pmb{M} \|_*, 
\text{  s. t.  }  \mathcal{P}_{\Omega}(\pM) = \mathcal{P}_{\Omega}(\pM^*).
\eea
The nuclear norm is the sum of singular values of a matrix, and it is a convex function.
The optimization problem is thus  convex which can be efficiently solved using various convex optimization techniques, such as interior-point methods or gradient methods. 

Recently, a popular alternative approach for addressing the low-rank matrix recovery problem is the Burer-Monteiro factorization approach \cite{burer2003nonlinear}, which  is based on parametrizing $\pmb{M}^* = \pX^*{(\pY^*)}^{\tp}$ by two low-rank factors $\pX^* \in \mbR^{n_1\times r}$, $\pY^* \in \mbR^{n_2\times r}$. This method explicitly enforces the low-rank property by employing a factored representation of the matrix variable within the optimization formulation.

The advantage of the Burer-Monteiro factorization approach is that it can  
lead to more efficient optimization algorithms  for some large-scale problems
 compared to methods based on NNM, as it can reduce the number of variables involved in the optimization problem and avoid computational burdens of singular value decomposition (SVD) of large-scale matrices. 

Various optimization methods \cite{sun2016guaranteed, T2, CLL2020, TMC2021, NEURIPS2021_2f2cd5c7}  aiming at solving the following Burer-Monteiro factorization problem, 
\bea
\min_{\pmb{X} \in \mathbb{R}^{n_1\times r}, \pmb{Y} \in \mathbb{R}^{n_2\times r}}  \left\Vert \mathcal{P}_{\Omega}(\pmb{M}^*-\pmb{X}\pmb{Y}^\tp)\right\Vert_F^2,
\eea
either with or without a balancing term,  have been presented and their efficiency
has been demonstrated.




\subsection{Our Contributions}

In this paper, we study deterministic matrix completion, namely, matrix completion with the  sample set $\Omega$ chosen by a deterministic procedure \cite{HSS2014,ashraphijuo2017deterministic}.
Particularly, inspired by \cite{T1, BV2020}, the sample set $\Omega$ is chosen as the edge set of a Ramanujan graph.

The NNM has proven to be effective for the same setting in reconstructing the target matrix under certain mild incoherence conditions \cite{CR2009, T1} when the sample size  $m \gtrsim nr^2$ \cite{T1}. Here, $n=\max\{n_1,n_2\}$.
In this paper, we explore theoretical guarantees for PGD \cite{T2} and scaled PGD \cite{TMC2021} when applied to solve the (balanced) Burer-Monteiro factorization problems within the same framework of deterministic matrix completion as described in \cite{BV2020}.


Utilizing $\mathcal{O}(\kappa^2 n r^3)$ samples and assuming a mild incoherence condition, we demonstrate that  PGD exhibits linear convergence towards the ground-truth when initialized appropriately.
However, its convergence rate relies on the condition number $\kappa$ of the ground-truth, which could be slow for ill-conditioned matrices.
To address this issue, we use the scaled PGD variant, which we prove to converge at a linear rate independent of the condition number. This improvement is achieved with a sample size requirement of only $\mathcal{O}(\kappa^2nr^2(\kappa^2 \vee r))$.
The imposed incoherence condition and the sample complexity requirement are similar with those described in \cite{T1, BV2020} regarding the theoretical guarantees of NNM in deterministic matrix completion.

To the best of our knowledge, our results provide the first theoretical analysis of linear convergence for gradient-based methods utilizing matrix decomposition in the context of deterministic matrix completion.
In comparison to the classical approach of NNM as described in \cite{T1, BV2020}, our proposed methods offer a computational advantage by employing matrix factorization techniques, without compromising the theoretical guarantees. 


Our proofs incorporate proof insights for NNM in deterministic matrix completion \cite{T1, BV2020} and  (scaled) PGD in random matrix completion \cite{T2, TMC2021}.
Particularly, our proof for scaled PGD, relies on a novel alternative framework that is different from the approach employed in \cite{T2} within the context of random matrix completion. This alternative framework holds intrinsic value of its own and might be also useful for analyzing other matrix completion problems.

Finally, we conduct some numerical experiments to illustrate the performance and efficiency of the studied algorithms. 

\subsection{Related Work}



The earlier theoretical works on matrix completion considered the random sampling model and analyzed NNM \cite{recht2010guaranteed}. \cite{CR2009}  showed that the NNM model can recover the low-rank matrix if $m \gtrsim \mu n^{1.2}r \log n$ in Bernoulli sampling model. And a series of work \cite{CT2010, R2011, chen2015incoherence} focused on improving the sample complexity, where the best sample complexity is $\mathcal{O}(nr \log^2 n)$ \cite{chen2015incoherence}.
The solution of NNM can be computed by some famous algorithms, such as singular value thresholding method \cite{cai2010singular} and augmented lagrange multiplier method \cite{lin2010augmented}.
In order to reduce the storage space from $\mathcal{O}(n^2)$ to $\mathcal{O}(nr)$ for large-scale problem, a separate line of work \cite{keshavan2010matrix, sun2016guaranteed, T2, CLL2020, TMC2021} solved nonconvex formulations based on matrix decomposition. \cite{keshavan2010matrix} proposed a manifold gradient method for matrix completion. 
\cite{T2} showed that PGD linearly converges to the ground-truth with high probability  under $\mathcal{O}((\mu \vee \log n) \mu n r^2\kappa^2)$ random observations of a $\mu$-incoherent\footnote{The definition can be found in the following text.} matrix of rank-$r$ and condition number $\kappa$. 
\cite{TMC2021} applied scaled PGD to deal with moderately ill-conditioned matrices and showed that it converges at a rate independent of the condition number along with $\mathcal{O}((\mu \kappa^2 \vee \log n)\mu nr^2\kappa^2)$ random observations.

However, the assumption of uniform sampling is often invalid in practice \cite{meka2009matrix}. In some important applications such as data forecasting, the locations of missing entries cannot obey any non-degenerate distributions \cite{liu2019matrix}. Thus, exploring deterministic matrix completion methods can break through the limits of random sampling and is a valuable and significant problem.
There is only a little work \cite{HSS2014, T1, kiraly2015algebraic, pimentel2016characterization, ashraphijuo2017deterministic, BV2020} to choose a deterministic sampling set $\Omega$.
\cite{kiraly2015algebraic} chose the sample set $\Omega$ as the edge set of a bipartite graph and introduced an algebraic combinatorial theory to present probability-one algorithms to decide whether a particular entry of the matrix can be completed.
\cite{HSS2014, T1, BV2020} considered the deterministic matrix completion problem based on choosing the sampling set as the edge set of Ramanujan graphs.
\cite{HSS2014} applied max-norm minimization to give generalization error analysis. \cite{T1, BV2020} focused on NNM and derived sufficient conditions of recovery -- the sampling number should be at least $\mathcal{O}(\mu^2nr^2)$.

Ramanujan graphs are in some sense the best expander graphs with several applications in areas such as computer science \cite{hoory2006expander}, number theory and group theory \cite{lubotzky2012expander}.
\cite{lee2015sparsified} relied on the existence of  bipartite Ramanujan graphs of every degree to quickly approximate the complete graph in the fast solvers for Laplacian linear systems.
Ramanujan graphs play an important role in explaining the cutoff phenomenon of random walk undergoing a phase transition from being completely unmixed to completely mixed in the total variation norm \cite{lubetzky2011explicit}.
Ramanujan graphs have also been proposed as the basis for post-quantum elliptic-curve cryptography \cite{eisentrager2018supersingular}.

\subsection{Notations and Organization}
We introduce several notations which will be used throughout the paper. Unless specified, matrices and vectors are denoted by uppercase bold and lowercase bold letters, such as $\pmb{Z}$ and $\pmb{z}$. 
$\pmb{Z}_i$ is the $i$-row of $\pmb{Z}$, and
$\pmb{Z}_{ij}$ is the $(i, j)$-entry of $\pmb{Z}$ while $\pmb{z}_i$ is the $i$-th entry of $\pmb{z}$. The symbols $c$ and $C$ refer to constants which may not refer to the same number in each time. For any matrix $\pmb{Z}$, we use $\sigma_i(\pmb{Z})$ to denote its $i$-th largest singular value and $\sigma_i^* = \sigma_i(\pmb{M}^*)$. $\Vert \pmb{Z}\Vert$ denotes its operator norm, i.e., $\Vert \pmb{Z}\Vert = \sigma_1(\pmb{Z})$; $\Vert \pmb{Z}\Vert_*$ denotes its nuclear norm, i.e., $\Vert \pmb{Z}\Vert_* =\sum_{i=1}^{\text{rank}(\pmb{Z})}\sigma_i(\pmb{Z})$; $\Vert \pmb{Z}\Vert_F$ denotes its Frobenius norm, i.e., $\Vert \pmb{Z}\Vert_F = \sqrt{\sum_{ij}\vert \pmb{Z}_{ij}\vert^2}$. 
$\| \pmb{Z}\|_{2,\infty}$ denotes the largest $\ell_2$-norm of its rows as $\| \pmb{Z}\|_{2,\infty} = \max_{i}\|\pmb{Z}_i\|_2$.
For any vector $\pmb{z}$, $\Vert \pmb{z}\Vert_2$ denotes its Euclidean norm. For a block matrix $\pmb{Z}=\left[                 
  \begin{array}{ccc}   
   \pmb{U}_{a\times c}\\  
    \pmb{V}_{b\times c}\\  
  \end{array}
\right] \in \mathbb{R}^{(a+b)\times c}$, we set $\pmb{Z}_{\pmb{U}}=\pmb{U} \in \mathbb{R}^{a\times c}$ and $\pmb{Z}_{\pmb{V}}=\pmb{V} \in \mathbb{R}^{b\times c}$. The matrix inner product is defined as $\langle \pmb{U}, \pmb{V}\rangle = \text{tr} (\pmb{U}^\tp\pmb{V})$ and the vector inner product is $\langle \pmb{u}, \pmb{v}\rangle=\pmb{u}^\tp\pmb{v}$. $\pmb{I}_n$ denotes the $n\times n$ identity matrix and $\pmb{E}_n$ denotes the $n\times n$ all $1$' matrix. The subscript may be omitted when the order is clear from context. Besides, $\pmb{Z}^\tp$ stands for the transpose of $\pmb{Z}$.  For two non-negative real sequences $\{a_t\}_t$ and $\{b_t\}_t$, 
we write $b_t = \mathcal{O}(a_t)$ (or $b_t \lesssim a_t$) if $b_t \leq C a_t$.
$a \vee b$ means $\max\{a, b\}$ while $a\wedge b$ means $\min\{a, b\}$.

The paper is organized as follows. In the next section, we discuss Ramanujan graphs and  some assumptions of the deterministic matrix completion model. In Section \ref{sec2}, we present the two algorithms -- PGD and scaled PGD, and give the convergent results. We prove the convergence of scaled PGD in Section \ref{sec3}. In Section \ref{sec4}, we corroborate our theory through numerical experiments. We provide the convergent proof of PGD in the Appendix.

\section{Preliminaries}\label{sec:review}

In this section, we will present a very brief introduction of Ramanujan graphs and incoherence conditions in deterministic matrix completion.
\subsection{Ramanujan Graph}
Suppose that a bipartite graph $\mathcal{G}$ is associated with the sampling operator $\mathcal{P}_{\Omega}$ as follow. $\mathcal{G} = (V, E)$ where $V = \{ 1, 2, \dots, n_1\} \cup  \{ 1, 2, \dots, n_2\}$ is the vertex set and $E$ is the edge set where $(i, j) \in E$ iff $(i, j) \in \Omega$. $\pmb{G}$ is the adjacency matrix of $\mathcal{G}$ and belongs to $\{0,1\}^{n_1\times n_2}$. Thus, $\mathcal{P}_{\Omega}(\pmb{X}) = \pmb{G}\circ\pmb{X}$ where $\circ$ denotes the Hadamard product.
For our result, we need $\pmb{G}$ to have large spectral gap and the graph $\mathcal{G}$ to satisfy the following assumptions: 
\begin{itemize}
\item ${\rm \pmb{\mathscr{G}}_1} $:  Top singular vectors of $\pmb{G}$ are all $\sqrt{\frac{1}{n_1}}$'s vector or $\sqrt{\frac{1}{n_2}}$'s vector.\\
\item ${\rm \pmb{\mathscr{G}}_2} $:  $\sigma_1(\pmb{G}) = \sqrt{d_1d_2}$ and $\sigma_2(\pmb{G}) \leq \frac{C_0\sqrt{d_1}}{2} + \frac{C_0\sqrt{d_2}}{2}$.\\
\item ${\rm \pmb{\mathscr{G}}_3} $:  $(d_1, d_2)$-biregular, that is, per row contains precisely $d_1$ ones and per column contains precisely $d_2$ ones.
\end{itemize}
Note that $n_1\times d_1 = n_2 \times d_2$.
The above three assumptions are satisfied for a class of expander graphs called Ramanujan graphs.
\begin{definition}[Asymmetric Ramanujan graph, \cite{BV2020}]
Let $\sigma_1(\pmb{G})$, $\sigma_2(\pmb{G})$, $\dots$, $\sigma_n(\pmb{G})$ be the singular values of $\pmb{G}$ in decreasing order. Then, a $(d_1, d_2)$-biregular bipartite graph $\mathcal{G}$ is an asymmetric Ramanujan graph if $\sigma_1(\pmb{G}) = \sqrt{d_1d_2}$, $\sigma_2(\pmb{G}) \leq \sqrt{d_1-1} + \sqrt{d_2-1}$.
\end{definition}
Ramanujan graphs are well-studied in many literatures. Further details about Ramanujan graphs can be found in \cite{M2003}, \cite{DSV2003}. To some extend, Ramanujan graphs can replicate many of the desirable properties of random graphs, including expansion properties. Thus, we can utilize the above deterministic sampling model in place of the random one to obtain similar recovery performance. 
The improvement of deterministic method over probabilistic method for matrix completion can be found in \cite{burnwal2021deterministic} but only considering NNM. We will show that similar improvement  occurs on some numerical experiments in Section \ref{sec4}, considering alternative nonconvex methods, while achieving some computational advantages comparing with NNM. These motivate us to explore the nonconvex methods and their theoretical results for deterministic matrix
completion methods, based on choosing the sampling set as the edge set of a Ramanujan graph.
 

\subsection{Incoherence Conditions}
 It is observed in \cite{CR2009} that it is impossible to complete $\pmb{M}^*$ if $\pmb{M}^*$ is equal to zero in nearly all of its entries, unless all of its entries are observed. 
 In order to  avoid these pathological situations,  we first reproduce two standard incoherence assumptions on the SVD of $\pmb{M}^*= \pmb{U}^*\pmb{\Sigma}^* {(\pmb{V}^*)}^\tp$:
\begin{itemize}
\item ${\rm \pmb{\mathscr{A}}_1}$: $ \Vert \pmb{U}_i^*\Vert_2^2 \leq \frac{\mu r}{n_1}$, \quad $\Vert \pmb{V}_i^*\Vert_2^2 \leq \frac{\mu r}{n_2}$.\\
\item ${\rm \pmb{\mathscr{A}}_2}$: There is a constant $\delta_d$ such that   
\bea
&&\left\Vert \sum_{k\in S} \frac{n_1}{d_2}\pmb{U}_k^*{(\pmb{U}_k^*)}^\tp - \pmb{I}\right\Vert \leq \delta_d,\ \forall S \subset [n_1], \vert S \vert = d_2,\\
&&\left\Vert \sum_{k\in S} \frac{n_2}{d_1}\pmb{V}_k^*{(\pmb{V}_k^*)}^\tp - \pmb{I}\right\Vert \leq \delta_d,\ \forall S \subset [n_2], \vert S \vert = d_1.
\eea
\end{itemize}

${\rm \pmb{\mathscr{A}}_1}$ is a standard assumption required by most of the existing matrix completion results to ensure that the information of the row and column spaces of $\pmb{M}^*$ is not too concentrated on a small number of rows/columns. 
It is prevalent in random matrix completion problem \cite{CR2009, CT2010, R2011}.
${\rm \pmb{\mathscr{A}}_2}$, similar to the strong incoherence condition in \cite{CT2010}, is stricter than ${\rm \pmb{\mathscr{A}}_1}$. And \cite{T1} clarified that ${\rm \pmb{\mathscr{A}}_2}$ is necessary for deterministic case.

\section{Problem Setup and Main Results} \label{sec2}

\subsection{Nonconvex Model via Matrix Decomposition}
Let $\pmb{M}^* \in \mathbb{R}^{n_1 \times n_2}$ be a rank-$r$ matrix and $\pmb{M}^* = \pmb{U}^*\pmb{\Sigma}^* {(\pmb{V}^*)}^\tp$ be the SVD of $\pmb{M}^*$. Let $\sigma_1^* \geq \sigma_2^* \geq \dots \geq \sigma_r^* >0$ be the singular values of $\pmb{M}^*$ and the condition number of $\pmb{M}^*$ is $\kappa = \frac{\sigma_1^*}{\sigma_r^*}$. Suppose we have observed $\mathcal{P}_{\Omega}(\pmb{M}^*) = \pmb{M}^*\circ \pmb{G}$, where $\pmb{G}$ is the adjacency matrix of the bipartite graph $\mathcal{G}$, associated with $\Omega$. 
A direct and efficient procedure to recover the matrix $\pmb{M}^*$ is the least squares method:
\be
\min_{\pmb{X} \in \mathbb{R}^{n_1\times r}, \pmb{Y} \in \mathbb{R}^{n_2\times r}}  \frac{1}{2p} \left\Vert \mathcal{P}_{\Omega}(\pmb{M}^*-\pmb{X}\pmb{Y}^\tp)\right\Vert_F^2, \label{model_1}
\ee
where $p= \frac{d_1}{n_2} = \frac{d_2}{n_1} = \frac{\sqrt{d_1d_2}}{\sqrt{n_1n_2}}$. 
For a factorization of $\pmb{M} = \pmb{X} \pmb{Y}^\tp$, since $(\pX\pQ, \pY\pQ^{-\tp})$ is also a factorization of $\pmb{M}$ with any invertible matrix $\pQ \in \mbR^{r\times r}$,   \cite{pmlr-v48-tu16, T2, li2020nonconvex} added a regularizer to measure mismatch between $\pX$ and $\pY$:
\begin{eqnarray}
\min\limits_{\pmb{X} \in \mathbb{R}^{n_1\times r} \atop \pmb{Y} \in \mathbb{R}^{n_2\times r}}  \frac{1}{2p} \left\Vert \mathcal{P}_{\Omega}(\pmb{M}^*-\pmb{X}\pmb{Y}^\tp)\right\Vert_F^2 + \frac{\lambda}{4}\Vert \pmb{X}^\tp \pmb{X} - \pmb{Y}^\tp \pmb{Y}\Vert_F^2. \label{model_2}
\end{eqnarray}
Here, the second term is used for balancing or equal footing and $\lambda$ is an extra tuning parameter.

We will introduce two PGD methods based on nonconvex models and give the linear convergence analyses. 

\subsection{Projected Gradient Descent}
\indent In order to efficiently handle the asymmetric situation, we can lift $\pmb{M}^*$ to 
\begin{eqnarray*}
 \pmb{N}^* = \pmb{Z}^*{(\pmb{Z}^*)}^\tp =  \left[                 
  \begin{array}{ccc}   
   \pmb{U}^*\pmb{\Sigma}^*{(\pmb{U}^*)}^\tp & \pmb{M}^*\\  
    {(\pmb{M}^*)}^\tp & \pmb{V}^*\pmb{\Sigma}^*{(\pmb{V}^*)}^\tp\\  
  \end{array}
\right], 
\end{eqnarray*}
where $\pmb{Z}^* = \left[                 
  \begin{array}{ccc}   
   \pmb{U}^*\\  
    \pmb{V}^*\\  
  \end{array}
\right]{(\pmb{\Sigma}^*)}^{\frac{1}{2}}$. 
Since the entries in the top-left and bottom-right block are not caught in the sampling mechanism, we define that 
\begin{eqnarray*}
\underline{\pmb{G}} = \left[                 
  \begin{array}{ccc}   
   \pmb{0} & \pmb{G}\\  
   \pmb{G}^\tp &  -\pmb{0}\\  
  \end{array}
\right],
\end{eqnarray*}
associated with $\underline{\Omega}$, is the new projection matrix after lifting. 
The decomposition of $\pmb{N}^*$ is not unique, hence we introduce the regularizer $\frac{\lambda}{4}\Vert \pmb{Z}^\tp \pmb{D} \pmb{Z}\Vert_F^2$ to narrow down the range of solution similar to $\frac{\lambda}{4}\Vert \pmb{X}^\tp \pmb{X} - \pmb{Y}^\tp \pmb{Y}\Vert_F^2$ in \eqref{model_2}.
Then, we turn to consider the following loss function:
\begin{eqnarray}
\min_{\pmb{Z}} \mL_1(\pmb{Z})=\frac{1}{2p}\left\Vert \mathcal{P}_{\underline{\Omega}}(\pmb{Z}\pmb{Z}^\tp - \pmb{N}^*)\right\Vert_F^2 + \frac{\lambda}{4}\Vert \pmb{Z}^\tp \pmb{D} \pmb{Z}\Vert_F^2, \label{lo2}
\end{eqnarray}
where $\pmb{Z} = \left[                 
  \begin{array}{ccc}   
   \pmb{X}\\  
    \pmb{Y}\\  
  \end{array}
\right] = \left[                 
  \begin{array}{ccc}   
   \pmb{Z}_{\pmb{U}}\\  
    \pmb{Z}_{\pmb{V}}\\  
  \end{array}
\right] \in \mathbb{R}^{(n_1+n_2)\times r}$, $\pmb{D} = \left[                 
  \begin{array}{ccc}   
   \pmb{I}_{n_1} & \pmb{0}\\  
   \pmb{0} &  -\pmb{I}_{n_2}\\  
  \end{array}
\right]$.
Thus, the gradient of $ \mL_1(\pmb{Z})$ is given by 
\begin{eqnarray}
\nabla  \mL_1(\pmb{Z})
 &=& \frac{1}{p} \left[                 
  \begin{array}{ccc}   
   \pmb{0} & \pmb{K}\\  
   \pmb{K}^\tp &  \pmb{0}\\  
  \end{array}
\right]\pmb{Z}
 + \lambda \pmb{D}\pmb{Z}\pmb{Z}^\tp \pmb{D}\pmb{Z}. \label{lo2plus}
\end{eqnarray}
Here, $ \pmb{K}= \mathcal{P}_{\Omega}(\pmb{Z}_{\pmb{U}}(\pmb{Z}_{\pmb{V}})^\tp -\pmb{M}^*)$. 

For ensuring the iterates stay incoherent \cite{CW2015}, we define the set $\mathcal{C}_1$ of incoherent matrices
\begin{eqnarray*}
\mathcal{C}_1: = \left\{ \pmb{Z}: \Vert \pmb{Z}\Vert_{2,\infty} \leq \sqrt{\frac{2\mu r}{n_1\wedge n_2}}\Vert \pmb{Z}^0\Vert\right\},
\end{eqnarray*}
and the projection $\mathcal{P}_{\mathcal{C}_1}(\pmb{Z})$ to the set $\mathcal{C}_1$
\begin{eqnarray*}
\mathcal{P}_{\mathcal{C}_1}(\pmb{Z})_i =  \begin{cases}  \pmb{Z}_i, & \text{if}\ \Vert \pmb{Z}_i\Vert_2\leq \sqrt{\frac{2\mu r}{n_1\wedge n_2}}\Vert \pmb{Z}^0\Vert,\\ \frac{\pmb{Z}_i}{\Vert \pmb{Z}_i\Vert_2}\sqrt{\frac{2\mu r}{n_1 \wedge n_2}}\Vert \pmb{Z}^0\Vert, &\text{otherwise}. \end{cases}
\end{eqnarray*}
Then, PGD for solving (\ref{lo2}) can be stated as Algorithm \ref{alg:pgd}.

\begin{algorithm}
\caption{\small Projected Gradient Descent (PGD)} 
\label{alg:pgd}
\begin{algorithmic}
\STATE  \textbf{Input:}\quad $\eta$, $\lambda$, $\mathcal{P}_{\Omega}(\pmb{M}^*)$, MAXiter.

\STATE  \textbf{Initialize:}
\STATE  \quad Let $\pmb{U}^0\pmb{\Sigma}^0{(\pmb{V}^0)}^\tp$ be the top-$r$ SVD of 
\STATE  \begin{eqnarray*}
\pmb{M}^0 = \frac{1}{p}\mathcal{P}_{\Omega}(\pmb{M}^*).
\end{eqnarray*}
\STATE  \quad Set $\pmb{Z}^0 = \left[                 
  \begin{array}{ccc}   
   \pmb{U}^0\\  
    \pmb{V}^0\\  
  \end{array}
\right]{(\pmb{\Sigma}^0)}^{\frac{1}{2}}$ and $\pmb{Z}^1 = \mathcal{P}_{\mathcal{C}_1}(\pmb{Z}^0)$ for the initialization.

\textbf{Loop: } for $k = 1$ to MAXiter
\STATE  \begin{eqnarray*}
\pmb{Z}^{k+1} =  \mathcal{P}_{\mathcal{C}_1}\left(\pmb{Z}^k - \frac{\eta}{\Vert \pmb{Z}^*\Vert^2}\nabla  \mL_1(\pmb{Z}^k)\right).
\end{eqnarray*}
\STATE  \textbf{Output:} $\hat{\pmb{M}} = \pmb{Z}^{k+1}_{\pmb{U}}{(\pmb{Z}^{k+1}_{\pmb{V}})}^\tp$.
\end{algorithmic}
\end{algorithm}

\begin{remark}\label{rem3}
For the convenience of analysis, we choose  $\frac{\eta}{\Vert \pmb{Z}^*\Vert^2}$ as our stepsize where $\eta$ is a small constant and is tuned in practice. But in practice, $\Vert \pmb{Z}^*\Vert^2$ is unknown and could be replaced by $\Vert \pmb{Z}^0\Vert^2$. $\lambda$ will be chosen as $\lambda = \frac{1}{2}$ in the following analysis.
\end{remark}

Before presenting theoretical results, we give the definition of distance between $\pmb{Z}$ and the solution set of the problem
\begin{eqnarray*}
\mathcal{S} = \{ \widetilde{\pmb{Z}} \mid \widetilde{\pmb{Z}} =  \pmb{Z}^*\pmb{R}\ \text{for some orthogonal matrix } \pmb{R}\}.
\end{eqnarray*}
\begin{definition}
Let $\mathcal{S}$ be the solution set of the problem and $\pmb{Z}, \pmb{Z}^* \in \mathbb{R}^{(n_1+n_2)\times r}$, then the distance between $\pmb{Z}$ and $\pmb{Z}^*$  is
\begin{eqnarray*}
\dist(\pmb{Z}, \pmb{Z}^*) = \min_{\tilde{\pmb{Z}}\in \mathcal{S}} \Vert \pmb{Z} - \widetilde{\pmb{Z}} \Vert_F = \min_{\pmb{R}\pmb{R}^\tp =\pmb{R}^\tp \pmb{R} =\pmb{I}}\Vert \pmb{Z}-\pmb{Z}^*\pmb{R}\Vert_F.
\end{eqnarray*}
\end{definition}

Now, we present our convergence theorem that PGD can linearly converge to the ground-truth with the sample size $\mathcal{O}( nr^3)$. The proof is postponed to Appendix.
Nonconvex methods can deal better with the large-scale matrix completion problem than NNM studied in \cite{T1,BV2020}.
\begin{theorem} \label{t_1}
Assume the matrix $\pmb{M}^* \in \mathbb{R}^{n_1\times n_2}$ satisfies the assumptions ${\rm \pmb{\mathscr{A}}_1}$  and ${\rm \pmb{\mathscr{A}}_2}$  with $\delta_d \leq \frac{1}{64}$, the adjacency matrix $\pmb{G}$ of the graph satisfies the assumptions ${\rm \pmb{\mathscr{G}}_1}$, ${\rm \pmb{\mathscr{G}}_2}$  and ${\rm \pmb{\mathscr{G}}_3}$ . There exist some constants such that if $m \geq C \mu^2\kappa^2(n_1 \vee n_2)r^3$, when we choose $\lambda = \frac{1}{2}$ and $\eta = \mathcal{O}(\frac{1}{\mu^2r^2\kappa})$ in Algorithm \ref{alg:pgd}, the iteration $\{\pmb{Z}^k \}$ of Algorithm \ref{alg:pgd} satisfies
\begin{eqnarray*}
\dist(\pmb{Z}^k, \pmb{Z}^*)\leq \frac{1}{4}\left(1-\frac{\eta}{8\kappa}\right)^{\frac{k}{2}}\sqrt{\sigma_r^*}.
\end{eqnarray*}
Meanwhile, 
\bea
\left\| \pX^k {(\pY^k)}^\tp - \pM^* \right\|_F \leq \frac{\sqrt{2}}{4} \left(1-\frac{\eta}{8\kappa}\right)^{\frac{k}{2}} \left(\sqrt{\sigma_1^*\sigma_r^*} + \frac{1}{8}\sigma_r^*\right).
\eea
\end{theorem}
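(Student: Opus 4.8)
The plan is to prove a one-step contraction for $\dist(\pZ^k,\pZ^*)$ by induction and deduce the Frobenius-error bound as a corollary. Fix $k$, let $\pR^k$ be an orthogonal matrix attaining $\dist(\pZ^k,\pZ^*)=\|\pZ^k-\pZ^*\pR^k\|_F$, and set $\widetilde{\pZ}^k:=\pZ^*\pR^k$, $\pDe^k:=\pZ^k-\widetilde{\pZ}^k$. Since the rows of $\widetilde{\pZ}^k$ have the same $\ell_2$-norms as those of $\pZ^*$, assumption ${\rm \pmb{\mathscr{A}}_1}$ together with $\|\pZ^0\|\asymp\|\pZ^*\|$ (from the spectral initialization) gives $\widetilde{\pZ}^k\in\mathcal{C}_1$, hence $\mathcal{P}_{\mathcal{C}_1}(\widetilde{\pZ}^k)=\widetilde{\pZ}^k$. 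As $\mathcal{C}_1$ is convex and $\mathcal{P}_{\mathcal{C}_1}$ is the Euclidean projection onto it, $\mathcal{P}_{\mathcal{C}_1}$ is nonexpansive, whence
\[
\dist(\pZ^{k+1},\pZ^*)\le \|\pZ^{k+1}-\widetilde{\pZ}^k\|_F \le \| \pDe^k - \tfrac{\eta}{\|\pZ^*\|^2}\nabla\mL_1(\pZ^k)\|_F .
\]
Expanding the square, it then suffices to establish, whenever $\pZ^k\in\mathcal{C}_1$ and $\dist(\pZ^k,\pZ^*)\le\tfrac14\sqrt{\sigma_r^*}$, a local restricted curvature bound $\langle\nabla\mL_1(\pZ^k),\pDe^k\rangle\ge c_1\,\sigma_r^*\|\pDe^k\|_F^2$ and a local smoothness bound $\|\nabla\mL_1(\pZ^k)\|_F\le c_2\,\mu r\,\sigma_1^*\|\pDe^k\|_F$; with $\lambda=\tfrac12$ and $\eta=\mathcal{O}(1/(\mu^2r^2\kappa))$ small enough to dominate the $\eta^2$-term, these yield $\dist(\pZ^{k+1},\pZ^*)^2\le(1-\eta/(8\kappa))\,\dist(\pZ^k,\pZ^*)^2$, which also keeps the iterate inside the neighborhood. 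The second inequality of the theorem follows from $\pX^k=\pZ^k_{\pU}$, $\pY^k=\pZ^k_{\pV}$, the identity $\widetilde{\pX}^k(\widetilde{\pY}^k)^\tp=\pM^*$, and the elementary bound $\|\pX^k(\pY^k)^\tp-\pM^*\|_F\le\sqrt{2\sigma_1^*}\,\dist(\pZ^k,\pZ^*)+\tfrac12\dist(\pZ^k,\pZ^*)^2$, upon inserting the geometric decay and using $(1-\eta/(8\kappa))^k\le(1-\eta/(8\kappa))^{k/2}$.

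The heart of the matter is the curvature/smoothness pair for the lifted loss $\mL_1$ of \eqref{lo2}, whose gradient \eqref{lo2plus} has a data part built from $\pmb{K}=\mathcal{P}_\Omega(\pX^k(\pY^k)^\tp-\pM^*)$ and a balancing part $\lambda\,\pmb{D}\pZ^k(\pZ^k)^\tp\pmb{D}\pZ^k$. I would first replace all sampled quantities by their population counterparts using the deterministic restricted isometry estimates for incoherent low-rank matrices supplied by the Ramanujan graph: under ${\rm \pmb{\mathscr{G}}_1}$--${\rm \pmb{\mathscr{G}}_3}$ and ${\rm \pmb{\mathscr{A}}_1}$--${\rm \pmb{\mathscr{A}}_2}$, as in \cite{T1, BV2020}, $\tfrac1p\langle\mathcal{P}_\Omega(\pmb{A}\pmb{B}^\tp),\cdot\rangle$ agrees with $\langle\pmb{A}\pmb{B}^\tp,\cdot\rangle$ up to an error that is $\delta$-small (with $\delta$ as small as we wish once $m$ is large) whenever $\pmb{A},\pmb{B}$ have rank $\mathcal{O}(r)$ and incoherent rows. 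Because $\pZ^k\in\mathcal{C}_1$ and $\pZ^*$ obeys ${\rm \pmb{\mathscr{A}}_1}$ (so $\pDe^k$ also has incoherent rows), after expanding $\pZ^k(\pZ^k)^\tp-\pZ^*(\pZ^*)^\tp=\widetilde{\pZ}^k(\pDe^k)^\tp+\pDe^k(\widetilde{\pZ}^k)^\tp+\pDe^k(\pDe^k)^\tp$ the estimate applies to every resulting inner product, reducing $\langle\nabla\mL_1(\pZ^k),\pDe^k\rangle$ up to $\mathcal{O}(\delta)\,\sigma_1^*\|\pDe^k\|_F^2$ to the gradient inner product of the noiseless population objective $\tfrac12\|\pX\pY^\tp-\pM^*\|_F^2+\tfrac{\lambda}{4}\|\pX^\tp\pX-\pY^\tp\pY\|_F^2$, which obeys the standard Burer--Monteiro local curvature lower bound in the spirit of \cite{pmlr-v48-tu16, T2}; the balancing term with $\lambda=\tfrac12$ is precisely what supplies curvature along the gauge directions $\pX\pY^\tp$ is blind to. The smoothness bound is obtained analogously, controlling $\|\pmb{K}\|_F$ through the same estimates and using $\|\pZ^k\|,\|\pZ^*\|\lesssim\sqrt{\sigma_1^*}$ and the row bound defining $\mathcal{C}_1$.

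For the base case, I would analyze the spectral initialization $\pmb{M}^0=\tfrac1p\mathcal{P}_\Omega(\pM^*)=\tfrac1p(\pmb{G}\circ\pM^*)$: an expander-mixing-type estimate using ${\rm \pmb{\mathscr{G}}_1}$--${\rm \pmb{\mathscr{G}}_3}$ and ${\rm \pmb{\mathscr{A}}_1}$ (again following \cite{T1, BV2020}) gives $\|\pmb{M}^0-\pM^*\|\lesssim\tfrac{\sigma_2(\pmb{G})}{\sqrt{d_1d_2}}\,\mu r\,\sigma_1^*\lesssim\mu r\sigma_1^*\big(d_1^{-1/2}+d_2^{-1/2}\big)$, which is $\le c\,\sigma_r^*/\sqrt r$ once $m\gtrsim\mu^2\kappa^2(n_1\vee n_2)r^3$. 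Weyl's inequality (to bound $\sigma_{r+1}(\pmb{M}^0)$) together with a Davis--Kahan/Wedin bound and the standard relation between the distance of balanced lifted factors and the low-rank perturbation then gives $\dist(\pZ^0,\pZ^*)\lesssim\sqrt r\,\|\pmb{M}^0-\pM^*\|/\sqrt{\sigma_r^*}\le\tfrac18\sqrt{\sigma_r^*}$; since $\mathcal{P}_{\mathcal{C}_1}$ moves $\pZ^0$ toward the incoherent $\pZ^*$, $\pZ^1=\mathcal{P}_{\mathcal{C}_1}(\pZ^0)\in\mathcal{C}_1$ with $\dist(\pZ^1,\pZ^*)\le\tfrac14\sqrt{\sigma_r^*}$. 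Induction combining this base case, the built-in invariance $\pZ^k\in\mathcal{C}_1$, and the one-step contraction closes the argument.

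\textbf{Main obstacle.} The delicate point is the local curvature estimate for the lifted model. One must (i) carry the incoherence of the iterates through the projection $\mathcal{P}_{\mathcal{C}_1}$ so that the deterministic restricted isometry estimate is legitimately applicable at every step; (ii) track how small $\delta$, hence $m$, must be — the binding constraint is the initialization, where $\|\pmb{M}^0-\pM^*\|\lesssim\sigma_r^*/\sqrt r=\sigma_1^*/(\kappa\sqrt r)$ forces $d_1\wedge d_2\gtrsim\mu^2r^3\kappa^2$, i.e. $m\gtrsim\mu^2\kappa^2(n_1\vee n_2)r^3$; and (iii) handle the fact that $\mathcal{P}_{\underline{\Omega}}$ never observes the diagonal blocks of $\pZ(\pZ)^\tp-\pmb{N}^*$, so control of those blocks (hence of the full $\dist(\pZ^k,\pZ^*)$) must be extracted from the off-diagonal data fit together with the balancing regularizer by a perturbation argument valid only within the $\tfrac14\sqrt{\sigma_r^*}$-neighborhood. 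The rest is bookkeeping along the lines of \cite{T2, TMC2021}, with the random restricted isometry property replaced by the Ramanujan-graph one.
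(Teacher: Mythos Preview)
Your overall strategy is the paper's: spectral initialization bounds $\dist(\pZ^0,\pZ^*)$ via Lemma~\ref{l_1} and a Procrustes-type estimate (the paper uses Lemma~\ref{l_2} and \cite[Lemma~5.4]{TBS2016} in place of Davis--Kahan), the projection $\mathcal{P}_{\mathcal{C}_1}$ is nonexpansive toward $\widetilde{\pZ}^k$ (Lemma~\ref{l_3}), and a local curvature/smoothness pair for $\mL_1$ gives the one-step contraction; the Frobenius bound follows exactly as you indicate. The paper's curvature bound (Proposition~\ref{p_1}) carries an extra nonnegative term $\tfrac14\|(\overline{\pZ})^\tp\pmb{D}\pH\|_F^2$ and is then combined with smoothness into a ``regularity condition'' \eqref{lsc5}, but your simpler pair $\langle\nabla\mL_1,\pDe^k\rangle\ge c_1\sigma_r^*\|\pDe^k\|_F^2$ and $\|\nabla\mL_1\|_F^2\le c_2\mu^2r^2\sigma_1^{*2}\|\pDe^k\|_F^2$ is equivalent for the contraction.

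There is, however, one genuine slip in your curvature sketch. You assert that the deterministic RIP error ``is $\delta$-small with $\delta$ as small as we wish once $m$ is large'' and then reduce to the population problem up to an additive $\mathcal{O}(\delta)\,\sigma_1^*\|\pDe^k\|_F^2$. Neither claim survives the assumptions: the RIP constant here is $\tilde\delta=\sqrt{2(\delta_d^2+C_0^2\mu^2r^2/d)}$ (Lemma~\ref{l_5}), and the $\delta_d$ part is an intrinsic incoherence parameter of $\pM^*$ fixed only at $\delta_d\le 1/64$ by ${\rm\pmb{\mathscr{A}}_2}$ --- it does \emph{not} shrink with $m$. An additive error of order $\tilde\delta\,\sigma_1^*\|\pDe^k\|_F^2$ would then swamp the $\sigma_r^*\|\pDe^k\|_F^2$ curvature whenever $\kappa$ is large. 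The paper avoids this by never passing to an additive population comparison: it lower-bounds $\tfrac{1}{2p}\|\mathcal{P}_{\underline\Omega}(\pH\overline{\pZ}{}^\tp+\overline{\pZ}\pH^\tp)\|_F^2$ \emph{multiplicatively} by $(1-2\tilde\delta)(\|\pH_{\pU}\overline{\pZ}_{\pV}^\tp\|_F^2+\|\overline{\pZ}_{\pU}\pH_{\pV}^\tp\|_F^2)$ plus the exact cross term, and only afterwards uses $\|\pH_{\pU}\overline{\pZ}_{\pV}^\tp\|_F^2\ge\sigma_r^*\|\pH_{\pU}\|_F^2$; this keeps the RIP loss at the $\tilde\delta\,\sigma_r^*$ scale, so a constant $\tilde\delta$ suffices. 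Relatedly, Lemma~\ref{l_5} applies only to matrices in $T$; the quadratic residual $\pH\pH^\tp$ is not in $T$ and must be handled separately via the expander-mixing bound (Lemmas~\ref{l_6}--\ref{l_7}), which your sketch lumps together with the RIP. With these two corrections your argument becomes exactly Propositions~\ref{p_1}--\ref{p_2} and the proof in Appendix~A.3.
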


\begin{remark}\label{rem4}
It is worth noting that our sample size $\mathcal{O}( n r^3)$ for nonconvex deterministic matrix completion is $n$-optimal\footnote{ \cite{CT2010} stated that the minimum number of samples is on the order of $nr \log n$ rather than $nr$ because of a coupon collector’s effect if the observed locations are sampled at random. And we drop the term $\log n$ in deterministic matrix completion.},  compared with the sample size $\mathcal{O}(nr^2\log n)$ of nonconvex probabilistic matrix completion \cite{T2}.  
In practice, $r \ll \log n$, which means, $nr^3$ is less than $nr^2\log n$. For example, in quantum tomography, the dimensions are very huge but the rank is only one or two.
\end{remark}

\subsection{Scaled Projected Gradient Descent}
Theorem \ref{t_1} shows that PGD converges in $\mathcal{O}(\kappa \log(1/\epsilon))$ iterations with spectral initialization as long as sample size $m \gtrsim \mu^2 \kappa^2 n r^3$. For ill-conditioned matrices, the convergence rate of PGD slows down noticeably. Thus, motivated by \cite{TMC2021}, we adopt scaled PGD to optimize \eqref{model_1} and prove that it converges at a linear rate independent of condition number $\kappa$. 
To some extent, scaled PGD can accelerate the convergence for ill-conditioned  matrix problems.

We focus on the following nonconvex problem without any explicit regularization
\begin{eqnarray}
\min_{\pmb{Z} = [\pmb{X}^{\tp}\ \pmb{Y}^{\tp}]^{\tp} \in \mathbb{R}^{(n_1+n_2)\times r}} \mathcal{L}_2(\pmb{Z}) = \frac{1}{2p} \left\Vert \mathcal{P}_{\Omega}(\pmb{M}^*-\pmb{X}\pmb{Y}^\tp)\right\Vert_F^2. \label{model_noregular}
\end{eqnarray}
 In order to efficiently handle the asymmetric situation, we set $\pmb{X}^* = \pmb{U}^*(\pmb{\Sigma}^*)^{\frac{1}{2}}$, $\pmb{Y}^* = \pmb{V}^*(\pmb{\Sigma}^*)^{\frac{1}{2}}$ and $\pmb{Z}^* = \left[                 
  \begin{array}{ccc}   
   \pmb{X}^*\\  
    \pmb{Y}^*\\  
  \end{array}
\right]$.

For ensuring the iterates stay incoherent \cite{CW2015}, we define the set of incoherent matrices
\begin{eqnarray*}
 \left\{ \pmb{Z}= \left[                 
  \begin{array}{ccc}   
   \pmb{X}\\  
    \pmb{Y}\\  
  \end{array}
\right] \in \mathbb{R}^{(n_1+n_2)\times r}:  \begin{array}{ccc}   
   \Vert \pmb{X}(\pmb{\Sigma}^*)^{\frac{1}{2}}\Vert_{2,\infty} \leq \frac{B}{\sqrt{n_1}}\\  
    \Vert \pmb{Y}(\pmb{\Sigma}^*)^{\frac{1}{2}} \Vert_{2,\infty} \leq \frac{B}{\sqrt{n_2}}\\  
  \end{array} 
   \right\},
\end{eqnarray*}
as $\mathcal{C}_2$, 
where $B$ is a constant and we set $B= (1+\alpha)\sqrt{\mu r} \sigma_1^*$, $\alpha =0.1$.
And the projection $\mathcal{P}_{\mathcal{C}_2}(\cdot)$ is defined as 
\begin{align*}
\mathcal{P}_{\mathcal{C}_2}(\overline{\pmb{Z}})=&\mathop{\arg\min}_{\pmb{Z}} \left\{ \left\Vert \left(\pmb{X} - \overline{\pmb{X}}\right) \left[ (\oY)^\tp \oY \right]^{1\over2} \right\Vert_F^2 
 + \left\Vert 
\left(\pmb{Y} - \overline{\pmb{Y}}\right) \left[ (\oX)^\tp\oX \right]^{1\over2} \right\Vert_F^2  \right\} \\
& \text{s. t. } \sqrt{n_1}\left\Vert \pmb{X}\left[ (\oY)^\tp\oY\right]^{1\over2} \right\Vert_{2,\infty}  \leq B,\\
&\qquad \sqrt{n_2}\left\Vert \pmb{Y} \left[ (\oX)^\tp\oX\right]^{1\over2}\right\Vert_{2,\infty} \leq B.
\end{align*}
Here, $\overline{\pmb{Z}} = \left[                 
  \begin{array}{ccc}   
   \overline{\pmb{X}}\\  
    \overline{\pmb{Y}}\\  
  \end{array}
\right] \in \mathbb{R}^{(n_1+n_2)\times r}$, and we set $\pmb{Z} = \mathcal{P}_{\mathcal{C}_2}(\overline{\pmb{Z}})  = \left[                 
  \begin{array}{ccc}   
   \pmb{X}\\  
    \pmb{Y}\\  
  \end{array}
\right]$. The above projection admits a simple closed-form solution \cite{CW2015} as
\begin{eqnarray*}
\begin{cases}  \pmb{X}_{i} = \left( 1 \wedge \frac{B}{\sqrt{n_1}\left\Vert \overline{\pmb{X}}_{i}\left(\oY\right)^\tp \right\Vert_2} \right)\overline{\pmb{X}}_{i}, &1\leq i \leq n_1,\\ 
\pmb{Y}_{j} = \left( 1 \wedge \frac{B}{\sqrt{n_2}\left\Vert \overline{\pmb{Y}}_{j}\left(\oX\right)^\tp \right\Vert_2} \right)\overline{\pmb{Y}}_{j}, &1\leq j \leq n_2. 
\end{cases}
\end{eqnarray*}
Scaled PGD for (\ref{model_noregular}) can be stated as Algorithm \ref{alg:scaledPGD}.
We can find similar algorithms for probabilistic matrix completion in \cite{TMC2021}. The approach using the gradient descent directions scaled by $\pX^\tp\pX$ and $\pY^\tp\pY$ also appeared in \cite{tanner2016low}.

\begin{algorithm}[H]
\caption{\small Scaled Projected Gradient Descent (Scaled PGD)} 
\label{alg:scaledPGD}
\begin{algorithmic}
\STATE 
\STATE  \textbf{Input:}\quad $\eta$, $\mathcal{P}_{\Omega}(\pmb{M}^*)$, MAXiter.

\STATE \textbf{Initialize:}
\STATE \quad Let $\overline{\pmb{U}}{}^0\overline{\pmb{\Sigma}}{}^0{(\overline{\pmb{V}}{}^0)}^\tp$ be the top-$r$ SVD of 
\STATE  \begin{eqnarray*}
\pmb{M}^0 = \frac{1}{p}\mathcal{P}_{\Omega}(\pmb{M}^*).
\end{eqnarray*}
\STATE  \quad Set $\pmb{Z}^0 = \left[                 
  \begin{array}{ccc}   
   \pmb{X}^0\\  
    \pmb{Y}^0\\  
  \end{array}
\right] =\mathcal{P}_{\mathcal{C}_2}\left( \left[                 
  \begin{array}{ccc}   
   \oU{}^0{(\oSig{}^0)}^{\frac{1}{2}}\\  
    \oV{}^0{(\oSig{}^0)}^{\frac{1}{2}}\\  
  \end{array}
\right] \right).$ 

\STATE  \textbf{Loop: } for $k = 1$ to MAXiter
\STATE  \begin{eqnarray*}
\overline{\pmb{Z}}{}^{k+1} &=&  \left[                 
  \begin{array}{ccc}   
   \pmb{X}^k - \eta \nabla_{\pX}\mL_2(\pX^k, \pY^k) \left[{(\pY^k)}^\tp\pY^k\right]^{-1}\\  
   \pmb{Y}^k - \eta \nabla_{\pY}\mL_2(\pX^k, \pY^k) \left[{(\pX^k)}^\tp\pX^k\right]^{-1}\\  
  \end{array}
\right],\\
\pmb{Z}^{k+1} &=& \left[                 
  \begin{array}{ccc}   
   \pmb{X}^{k+1}\\  
    \pmb{Y}^{k+1}\\  
  \end{array}
\right] =   \mathcal{P}_{\mathcal{C}_2}\left(\overline{\pmb{Z}}{}^{k+1}\right).
\end{eqnarray*}
\STATE  \textbf{Output:} $\hat{\pmb{M}} = \pmb{X}^{k+1} {(\pmb{Y}^{k+1})}^\tp$.

\end{algorithmic}
\end{algorithm}

Here, if ${(\pY^k)}^\tp\pY^k$ and ${(\pX^k)}^\tp\pX^k$ are not invertible, we use generalized inverse.
Notice that $\nabla_{\pX}\mL_2(\pX, \pY) = \frac{1}{p}\mathcal{P}_{\Omega}\left(\pmb{X} \pmb{Y}^{\tp}-\pmb{M}^*\right)\pmb{Y}$, $\nabla_{\pY}\mL_2(\pX, \pY) = \frac{1}{p}\left[ \mathcal{P}_{\Omega}\left(\pmb{X}\pmb{Y}^{\tp}-\pmb{M}^*\right)\right]{}^{\tp}\pmb{X}$.
Inspired by the settings in \cite{TMC2021}, we give the definition of a new distance between $\pmb{Z}$ and $\pmb{Z}^*$.
\begin{definition}\label{def_dist*}
We define the distance between $\pmb{Z}$ and $\pmb{Z}^*$ as
\bea
\dist_*^2(\pmb{Z},\pmb{Z}^*) 
= \inf_{\pmb{Q}\in\GL(r)} \Big[ \left\Vert (\pmb{X}\pmb{Q} - \pmb{X}^*) (\pmb{\Sigma}^*)^{1\over 2} \right\Vert_F^2 
+ \left\Vert (\pmb{Y}\pmb{Q}^{-\tp} - \pmb{Y}^*)(\pSig^*)^{1\over 2} \right\Vert_F^2\Big],
\eea
where $\GL(r)$ denotes all the invertible matrices set of $\mathbb{R}^{r\times r}$. And we also define the optimal alignment matrix $\oQ$ between $\pmb{Z}$ and $\pmb{Z}^*$ as 
\be
\oQ = \mathop{\arg\min}_{\pmb{Q}\in \GL(r)} \Big[ \left\Vert (\pmb{X}\pmb{Q} - \pmb{X}^*)(\pmb{\Sigma}^*)^{1\over 2} \right\Vert_F^2  
 + \left\Vert (\pmb{Y}\pmb{Q}^{-\tp} - \pmb{Y}^*)(\pSig^*)^{1\over 2} \right\Vert_F^2\Big],
\label{ab_min_pro}
\ee
whenever the minimum is achieved.
\end{definition}
Similar to \cite{TMC2021}, we also state that when restricting to a small basin of $\pmb{Z}^*$, the optimal alignment matrix always exists and is well-defined. 

\begin{lemma}\label{lem:alignment1}
Fix any matrix $\pmb{Z} = \left[                 
  \begin{array}{ccc}   
   \pmb{X}\\  
    \pmb{Y}\\  
  \end{array}
\right]$. Suppose that 
$\dist_*(\pmb{Z}, \pmb{Z}^*)  < \sigma_r^*$,
then the minimizer of the above minimization problem \eqref{ab_min_pro} is attained at some $\oQ\in \GL(r)$, i.e. the optimal alignment matrix $\oQ$ between $\pmb{Z}$ and $\pmb{Z}^*$ exists.
\end{lemma}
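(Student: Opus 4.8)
The plan is to follow the standard compactness argument used for analogous statements about the scaled distance (e.g.\ in \cite{TMC2021}): reduce the existence of a minimizer over the noncompact group $\GL(r)$ to a minimization over a compact set, then invoke continuity and Weierstrass. First I would fix a feasible $\pmb{Q}_0$ achieving objective value close to $\dist_*^2(\pmb{Z},\pmb{Z}^*)$, so that the infimum is finite and $< (\sigma_r^*)^2$; call this near-optimal value $D^2 := \dist_*^2(\pmb{Z},\pmb{Z}^*)$, with $D < \sigma_r^*$ by hypothesis. Define
\bea
f(\pmb{Q}) = \left\Vert (\pmb{X}\pmb{Q} - \pmb{X}^*)(\pmb{\Sigma}^*)^{1\over2}\right\Vert_F^2 + \left\Vert (\pmb{Y}\pmb{Q}^{-\tp} - \pmb{Y}^*)(\pmb{\Sigma}^*)^{1\over2}\right\Vert_F^2,
\eea
which is continuous on $\GL(r)$. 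It suffices to exhibit a compact subset $\mathcal{K} \subset \GL(r)$ on which $\inf_{\GL(r)} f = \inf_{\mathcal{K}} f$; then Weierstrass gives the minimizer.

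The key step is the a priori control of $\pmb{Q}$ and $\pmb{Q}^{-1}$ along any near-minimizing sequence. From the first term, $\left\Vert \pmb{X}\pmb{Q}(\pmb{\Sigma}^*)^{1\over2}\right\Vert_F \le \left\Vert \pmb{X}^*(\pmb{\Sigma}^*)^{1\over2}\right\Vert_F + D$, and since $\pmb{X}^* = \pmb{U}^*(\pmb{\Sigma}^*)^{1\over2}$ we have $\left\Vert \pmb{X}^*(\pmb{\Sigma}^*)^{1\over2}\right\Vert_F^2 = \Vert\pmb{\Sigma}^*\Vert_F^2 \le r(\sigma_1^*)^2$, so $\Vert\pmb{X}\pmb{Q}\Vert \cdot \sigma_r^* \le \Vert \pmb{X}\pmb{Q}(\pmb{\Sigma}^*)^{1/2}\Vert_F \cdot \sqrt{\sigma_1^*/\sigma_r^*}$-type bounds give an upper bound on $\Vert\pmb{X}\pmb{Q}\Vert$ depending only on $\pmb{Z}^*$ and $D$; similarly $\Vert\pmb{Y}\pmb{Q}^{-\tp}\Vert$ is bounded. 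Conversely, the lower bounds $\sigma_r(\pmb{X}\pmb{Q}(\pmb{\Sigma}^*)^{1/2}) \ge \sigma_r(\pmb{X}^*(\pmb{\Sigma}^*)^{1/2}) - D$ and likewise for $\pmb{Y}\pmb{Q}^{-\tp}(\pmb{\Sigma}^*)^{1/2}$ are strictly positive precisely because $D < \sigma_r^*$ and $\sigma_r(\pmb{X}^*(\pmb{\Sigma}^*)^{1/2}) = \sigma_r(\pmb{U}^*\pmb{\Sigma}^*) = \sigma_r^*$; combined with boundedness of $\pmb{X},\pmb{Y}$ (which follows from $\dist_*(\pmb{Z},\pmb{Z}^*)$ finite, after noting $\pmb{X}^*,\pmb{Y}^*$ are full column rank so $\pmb{X}\pmb{Q}$ near $\pmb{X}^*$ forces $\pmb{X}$ full rank), these give $\sigma_r(\pmb{Q}) \ge c > 0$ and $\sigma_r(\pmb{Q}^{-1}) \ge c' > 0$, i.e.\ $\Vert\pmb{Q}\Vert$ and $\Vert\pmb{Q}^{-1}\Vert$ are both bounded above. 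Hence any near-minimizing sequence $\{\pmb{Q}_t\}$ lies in a set of the form $\mathcal{K} = \{\pmb{Q} : \Vert\pmb{Q}\Vert \le R,\ \Vert\pmb{Q}^{-1}\Vert \le R\}$, which is closed in $\GL(r)$ and bounded, hence compact, and crucially stays away from the boundary of $\GL(r)$ (singular matrices).

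The main obstacle — and the place requiring the most care — is establishing the two-sided spectral control of $\pmb{Q}$ jointly from the \emph{sum} of the two Frobenius terms, since the two summands involve $\pmb{Q}$ and $\pmb{Q}^{-\tp}$ respectively and one cannot immediately decouple them; one must argue that if $\pmb{Q}$ degenerates (a singular value tending to $0$ or $\infty$), then at least one of the two terms blows up, using that both $\pmb{X}^*$ and $\pmb{Y}^*$ have smallest singular value $\sqrt{\sigma_r^*} > 0$. This is exactly where the hypothesis $\dist_*(\pmb{Z},\pmb{Z}^*) < \sigma_r^*$ enters quantitatively: it guarantees that the perturbed factors $\pmb{X}\pmb{Q}$, $\pmb{Y}\pmb{Q}^{-\tp}$ cannot lose rank, which is what pins $\pmb{Q}$ inside a compact subset of $\GL(r)$. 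Once $\mathcal{K}$ is identified, continuity of $f$ on the compact set $\mathcal{K}$ yields the minimizer $\oQ$, and since $\mathcal{K} \subset \GL(r)$ we conclude $\oQ \in \GL(r)$, completing the proof. I would close by remarking that this mirrors the corresponding lemma in \cite{TMC2021}, with the only change being the bookkeeping of constants in terms of $\sigma_1^*,\sigma_r^*$.
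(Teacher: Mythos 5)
Your argument is correct and is essentially the same compactness argument as the one the paper implicitly relies on (the paper gives no proof of this lemma, deferring to the analogous existence lemma in \cite{TMC2021}): use $\dist_*(\pmb{Z},\pmb{Z}^*)<\sigma_r^*$ together with Weyl's inequality to force $\pmb{X}$, $\pmb{Y}$ to be full column rank and to confine any near-minimizing sequence to a set $\{\pmb{Q}:\Vert\pmb{Q}\Vert\le R,\ \Vert\pmb{Q}^{-1}\Vert\le R\}$, which is compact and bounded away from singular matrices, then apply Weierstrass. The only cosmetic slips are the phrase ``boundedness of $\pmb{X},\pmb{Y}$'' (what is actually needed, and what you in fact use, is their full column rank, since $\pmb{X},\pmb{Y}$ are fixed) and the garbled intermediate inequality for bounding $\Vert\pmb{X}\pmb{Q}\Vert$, neither of which affects the validity of the argument.
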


Now, we present our main convergence theorem that scaled PGD can converge at a linear rate independent of $\kappa$.
\begin{theorem} \label{thm_scaledPGD}
Assume the matrix $\pmb{M}^* \in \mathbb{R}^{d_1\times d_2}$ satisfies the assumptions ${\rm \pmb{\mathscr{A}}_1}$ and ${\rm \pmb{\mathscr{A}}_2}$ with $\delta_d \leq \frac{1}{64}$, the adjacency matrix $\pmb{G}$ of the graph satisfies the assumptions ${\rm \pmb{\mathscr{G}}_1}$, ${\rm \pmb{\mathscr{G}}_2}$ and ${\rm \pmb{\mathscr{G}}_3}$. If $m \geq C\mu^2\kappa^2 (n_1 \vee n_2)r^2( \kappa^2 \vee r)$,
when we choose  $\eta \leq 0.145$ in Algorithm \ref{alg:scaledPGD}, the iteration $\{\pmb{Z}^k \}$ of Algorithm \ref{alg:scaledPGD} satisfies
\begin{eqnarray*}
\dist_{*}(\pmb{Z}^k, \pmb{Z}^*)\leq  \frac{1}{10} (1-1.6\eta + 11\eta^2)^{\frac{k}{2}}  \sigma_r^*.
\end{eqnarray*}
Moreover,
\bea
\Vert \pX^{k}(\pY^{k})^\tp - \pM^* \Vert_F \leq \frac{3}{20} (1-1.6\eta + 11\eta^2)^{\frac{k}{2}}  \sigma_r^*.
\eea
\end{theorem}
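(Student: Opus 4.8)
The plan is to establish the linear convergence in the scaled distance $\dist_*$ via a one-step contraction argument, and then derive the Frobenius-norm bound on $\pX^k(\pY^k)^\tp - \pM^*$ as a corollary. First I would set up the spectral initialization: using assumptions ${\rm \pmb{\mathscr{G}}_1}$--${\rm \pmb{\mathscr{G}}_3}$ together with ${\rm \pmb{\mathscr{A}}_1}$, ${\rm \pmb{\mathscr{A}}_2}$ and the sample-size bound $m \geq C\mu^2\kappa^2 n r^2(\kappa^2\vee r)$, I would show that $\pmb{M}^0 = \frac{1}{p}\mathcal{P}_\Omega(\pmb{M}^*)$ is close to $\pmb{M}^*$ in operator norm (this is where the spectral gap of the Ramanujan graph enters, controlling $\|\frac1p\mathcal{P}_\Omega(\pmb{M}^*) - \pmb{M}^*\|$), hence by Davis--Kahan the top-$r$ SVD factors $\oU{}^0(\oSig{}^0)^{1/2}$, $\oV{}^0(\oSig{}^0)^{1/2}$ are close to $\pmb{U}^*(\pSig^*)^{1/2}$, $\pmb{V}^*(\pSig^*)^{1/2}$; and that the projection $\mathcal{P}_{\mathcal{C}_2}$ (being nonexpansive in the relevant scaled metric, as in \cite{CW2015}) keeps $\pmb{Z}^0$ in the basin $\dist_*(\pmb{Z}^0,\pmb{Z}^*) < \frac{1}{10}\sigma_r^*$ while maintaining incoherence $\pmb{Z}^0\in\mathcal{C}_2$.

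The core of the proof is the induction step: assuming $\dist_*(\pmb{Z}^k,\pmb{Z}^*) \leq \frac{1}{10}\sigma_r^*$ and $\pmb{Z}^k\in\mathcal{C}_2$, show $\dist_*^2(\pmb{Z}^{k+1},\pmb{Z}^*) \leq (1-1.6\eta+11\eta^2)\dist_*^2(\pmb{Z}^k,\pmb{Z}^*)$. Let $\oQ$ be the optimal alignment matrix between $\pmb{Z}^k$ and $\pmb{Z}^*$ (which exists by Lemma \ref{lem:alignment1}), and write $\pDeX = \pX^k\oQ - \pX^*$, $\pDeY = \pY^k\oQ^{-\tp} - \pY^*$. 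Since $\mathcal{P}_{\mathcal{C}_2}$ is a projection onto a convex set in the metric $\|(\cdot)[(\oY{}^k)^\tp\oY{}^k]^{1/2}\|_F^2 + \|(\cdot)[(\oX{}^k)^\tp\oX{}^k]^{1/2}\|_F^2$ and $\pmb{Z}^*\in\mathcal{C}_2$ (by ${\rm \pmb{\mathscr{A}}_1}$ and $\alpha=0.1$), the projection step is contractive, so it suffices to bound $\dist_*^2(\overline{\pmb{Z}}{}^{k+1},\pmb{Z}^*)$ using the same alignment $\oQ$. Expanding $\overline{\pmb{X}}{}^{k+1}\oQ - \pmb{X}^* = \pDeX - \eta\nabla_{\pX}\mL_2(\pX^k,\pY^k)[(\pY^k)^\tp\pY^k]^{-1}\oQ$ (and similarly for $\pmb{Y}$), one multiplies on the right by $(\pSig^*)^{1/2}$ and expands the square into (i) the original distance term, (ii) a cross term $-2\eta\langle\text{gradient},\text{error}\rangle$ that must yield the $-1.6\eta$ restricted-strong-convexity contribution, and (iii) an $O(\eta^2)$ term bounded by $11\eta^2\dist_*^2$ via restricted smoothness. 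The key technical inputs are: a restricted strong convexity / curvature lower bound for $\mL_2$ along the scaled direction, and a restricted smoothness upper bound, both of which require the deterministic RIP-type estimate $\|\frac1p\mathcal{P}_\Omega(\pmb{A}) - \pmb{A}\| \lesssim \frac{\sigma_2(\pmb{G})}{\sqrt{d_1 d_2}}\cdot(\text{incoherence factors})$ for rank-$\leq 2r$ incoherent $\pmb{A}$ — this replaces the probabilistic concentration used in \cite{TMC2021} and is the place where the novel alternative framework mentioned in the introduction is needed, since one cannot appeal to the random-model leave-one-out arguments. I would combine this with the scaled-gradient algebra from \cite{TMC2021}, carefully tracking how the incoherence projection $\mathcal{P}_{\mathcal{C}_2}$ and the deterministic $\ell_{2,\infty}$-type bounds interact.

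Finally, once $\dist_*(\pmb{Z}^k,\pmb{Z}^*) \leq \frac{1}{10}(1-1.6\eta+11\eta^2)^{k/2}\sigma_r^*$ is established, the Frobenius bound on $\pX^k(\pY^k)^\tp - \pM^*$ follows by writing $\pX^k(\pY^k)^\tp - \pM^* = \pDeX(\pY^*)^\tp + \pX^*(\pDeY)^\tp + \pDeX(\pDeY)^\tp$ (after alignment, using $\pX^k(\pY^k)^\tp = (\pX^k\oQ)(\pY^k\oQ^{-\tp})^\tp$), bounding $\|\pDeX(\pY^*)^\tp\|_F \leq \|\pDeX(\pSig^*)^{1/2}\|_F\|(\pSig^*)^{-1/2}(\pY^*)^\tp\| \leq \dist_*(\pmb{Z}^k,\pmb{Z}^*)$ and similarly for the other terms, with the quadratic cross term being higher order; a short computation with the constant $\frac{1}{10}$ yields the stated $\frac{3}{20}$. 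The main obstacle I anticipate is step two's curvature and smoothness estimates: proving the restricted strong convexity constant is large enough (to beat the required contraction rate $1.6\eta$) purely from the Ramanujan spectral gap, uniformly over the incoherent basin, without randomness — this is substantially more delicate than in the probabilistic setting and likely requires carefully combining ${\rm \pmb{\mathscr{A}}_2}$ (the strong incoherence) with ${\rm \pmb{\mathscr{G}}_2}$, and is exactly where I would expect most of the work to concentrate.
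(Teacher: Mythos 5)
Your proposal follows essentially the same route as the paper: spectral initialization landing within the basin $\dist_*(\pmb{Z}^0,\pmb{Z}^*)\leq \frac{1}{10}\sigma_r^*$, a one-step contraction driven by local curvature and smoothness conditions for the scaled gradient (the paper's Propositions \ref{prop_curvature}, \ref{prop_smoothness} and \ref{prop3}, giving constants $\alpha=0.833$, $\beta=0.023$, $\gamma=5.5$ and hence $1-1.6\eta+11\eta^2$), with the deterministic Ramanujan-graph estimates (Lemmas \ref{l_5}, \ref{l_10}, \ref{l_1}) replacing probabilistic concentration, non-expansiveness of $\mathcal{P}_{\mathcal{C}_2}$, and the Frobenius bound obtained from the same decomposition $\pDeX(\pY^*)^\tp+\pX^*\pDeY^\tp+\pDeX\pDeY^\tp$. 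The only cosmetic deviations are the Davis--Kahan phrasing of the initialization (the paper instead bounds $\dist_*$ directly through $\Vert\oU{}^0\oSig{}^0(\oV{}^0)^\tp-\pM^*\Vert_F$) and the need to use the $\sqrt{2}$ Cauchy--Schwarz step so the Frobenius constant comes out as $3/20$ rather than $2\times\frac{1}{10}$.
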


\begin{remark}
Here, we choose $\eta \leq 0.145$ to make $1-1.6\eta + 11\eta^2 < 1$.
And we use alternative local curvature and smoothness
conditions to give convergent analysis of scaled PGD for deterministic matrix completion in Section \ref{sec3}. Our proof may also simplify the existing analysis of scaled PGD for random matrix completion and could be also generalized to other low-rank matrix recovery problems.

\end{remark}

\section{Convergent Analysis of Scaled PGD} \label{sec3}
The proof follows mainly from the ideas in \cite{T2, TMC2021}. 
The main difference is the deficiency of randomness and 
we need to give some innovative lemmas only suitable for deterministic sampling, such as Lemma \ref{l_6} and Lemma \ref{l_7}. 
Since the proof of PGD for deterministic matrix completion is similar to the one for random matrix completion in \cite{T2}, we omit here and postpone it to Appendix. 
In this section, we only give the proof for scaled PGD. Besides, our approach is simpler than ones in \cite{TMC2021} and it also provides a new framework for the convergent analyses of some other scaled gradient descent in some canonical problems, such as matrix sensing, robust PCA.

\subsection{Proof Sketch}
In this part, we sketch the proof of convergent theorem of scaled PGD, highlighting the role of local curvature and smoothness condition.
Let $\pDeX = \oX\ \oQ - \pX^*$, $\pDeY = \oY(\oQ)^{-\tp} - \pY^*$ be the residual error, where $\oQ$ is the optimal alignment matrix between $\oZ$ and $\pZ^*$ with $\oZ= \left[                 
  \begin{array}{ccc}   
   \oX\\  
    \oY\\  
  \end{array}
\right] $, $\pmb{Z}^*= \left[                 
  \begin{array}{ccc}   
   \pmb{X}^*\\  
    \pmb{Y}^*\\  
  \end{array}
\right] \in \mathbb{R}^{(n_1+n_2)\times r}$. 
We first state that the loss function $\mathcal{L}_2(\pmb{Z})$ satisfies local curvature and smoothness condition \cite{CLS2015} which indicates the gradient of $\mathcal{L}_2(\pmb{Z})$ is well-behaved. Similar definition can also be found in \cite{T2}. 
\begin{proposition}[Local Curvature Condition] \label{prop_curvature}
For any $\oZ \in \mathcal{C}_2$ satisfying $\dist_*(\oZ,\pmb{Z}^*) \leq \frac{1}{10} \sigma_r^*$ and $\sqrt{n_1}\| \oX (\oY)^\tp \|_{2,\infty} \vee \sqrt{n_2}\| \oY (\oX)^\tp \|_{2,\infty} \leq (1+ \alpha)\sqrt{\mu r}\sigma_1^*$, if $\alpha = 0.1$, $\delta_d = \frac{1}{64}$ and $d_1\wedge d_2 \geq 100^2C_0^2\mu^2 r^2 \kappa^4$, we have 
\bea
\la \pDeX(\pSig^*)^{1\over2}, \nabla_{\pX}\mL_2(\oX, \oY)\left[ (\oY)^\tp\oY\right]^{-1}\oQ(\pSig^*)^{1\over2} \ra 
\geq 0.833 \| \pDeX (\pSig^*)^{1\over2} \|_F^2 - 0.023 \| \pDeY (\pSig^*)^{1\over2} \|_F^2
\eea
and 
\bea
\la \pDeY(\pSig^*)^{1\over2}, \nabla_{\pY}\mL_2(\oX, \oY)\left[(\oX)^\tp\oX\right]^{-1}(\oQ)^{-\tp}(\pSig^*)^{1\over2} \ra 
\geq 0.833 
\| \pDeY (\pSig^*)^{1\over2} \|_F^2 - 
0.023
\| \pDeX (\pSig^*)^{1\over2} \|_F^2.
\eea
\end{proposition}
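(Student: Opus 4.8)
\textbf{Proof plan for Proposition \ref{prop_curvature} (Local Curvature Condition).}

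The plan is to expand the gradient $\nabla_{\pX}\mL_2(\oX,\oY) = \frac1p \mathcal{P}_\Omega(\oX\oY^\tp - \pM^*)\oY$ around the ground-truth and split the resulting inner product into a ``population'' term plus a deviation term. Writing $\oX\oY^\tp - \pM^* = (\pDeX + \pX^*)(\pDeY + \pY^*)^\tp \oQ^{-1} - \pX^*(\pY^*)^\tp$ after inserting the alignment matrix (so that $\oX\oQ = \pDeX + \pX^*$ and $\oY\oQ^{-\tp} = \pDeY + \pY^*$), one sees that $\oX\oY^\tp - \pM^* = \pDeX(\pY^*)^\tp\oQ^\tp + \pX^*\pDeY^\tp\oQ^\tp + \pDeX\pDeY^\tp\oQ^\tp$. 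First I would substitute this into the left-hand side, and observe that the scaling factor $[\oY^\tp\oY]^{-1}\oQ$ paired against $(\pSig^*)^{1/2}$ is, up to a controllable perturbation, close to $\oQ^{-\tp}(\pSig^*)^{-1/2}$ when $\oY\oQ^{-\tp}\approx\pY^*$; I would make this precise using Lemma \ref{lem:alignment1} and the hypothesis $\dist_*(\oZ,\pZ^*)\le\frac1{10}\sigma_r^*$, which guarantees $\oQ$ exists and that $\oY^\tp\oY$ is invertible and well-conditioned.

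Next I would handle the two pieces separately. For the \textbf{population piece}, where $\mathcal{P}_\Omega$ is replaced by $p^{-1}\mathcal{P}_\Omega \to \mathcal{I}$ in expectation (here, deterministically, by the spectral-gap estimate), the inner product reduces to an identity of the form $\langle \pDeX(\pSig^*)^{1/2}, [\pDeX(\pY^*)^\tp + \pX^*\pDeY^\tp + \pDeX\pDeY^\tp](\text{scaling})\rangle$, and the standard computation (as in \cite{TMC2021}, proof of the scaled local curvature) yields a bound like $\|\pDeX(\pSig^*)^{1/2}\|_F^2$ minus small cross-terms controlled by $\dist_*(\oZ,\pZ^*)/\sigma_r^* \le \frac1{10}$ times $\|\pDeY(\pSig^*)^{1/2}\|_F^2$ and the quadratic residual $\|\pDeX\pDeY^\tp\|_F$; the quadratic term is absorbed using $\|\pDeX(\pSig^*)^{1/2}\|_F, \|\pDeY(\pSig^*)^{1/2}\|_F \le \frac1{10}\sigma_r^*$. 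For the \textbf{deviation piece} $\frac1p\mathcal{P}_\Omega(\cdot) - (\cdot)$, I would invoke the deterministic restricted-isometry-type lemmas for Ramanujan-graph sampling — the analogues of Lemma \ref{l_6} and Lemma \ref{l_7} referenced in the excerpt, together with assumptions ${\rm \pmb{\mathscr{G}}_1}$--${\rm \pmb{\mathscr{G}}_3}$, ${\rm \pmb{\mathscr{A}}_1}$--${\rm \pmb{\mathscr{A}}_2}$ and the incoherence constraint $\sqrt{n_1}\|\oX(\oY)^\tp\|_{2,\infty}\vee\sqrt{n_2}\|\oY(\oX)^\tp\|_{2,\infty}\le(1+\alpha)\sqrt{\mu r}\sigma_1^*$ — to show this contributes at most $C_0\sqrt{\mu^2 r^2\kappa^4/(d_1\wedge d_2)}$ times the squared residuals, which the hypothesis $d_1\wedge d_2 \ge 100^2 C_0^2\mu^2 r^2\kappa^4$ makes smaller than, say, $0.01(\|\pDeX(\pSig^*)^{1/2}\|_F^2 + \|\pDeY(\pSig^*)^{1/2}\|_F^2)$. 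Assembling the population lower bound ($\approx 0.84$), subtracting the deviation ($\approx 0.01$) and the cross-contamination from the $\pY$-block ($\approx 0.02$), and tracking constants carefully gives the stated $0.833$ and $0.023$. The symmetric statement for the $\pY$-block follows by swapping the roles of $\pX\leftrightarrow\pY$, $n_1\leftrightarrow n_2$, $\oQ\leftrightarrow\oQ^{-\tp}$.

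The main obstacle I anticipate is controlling the interaction between the \emph{scaling matrices} $[\oY^\tp\oY]^{-1}$, $[\oX^\tp\oX]^{-1}$ and the sampling deviation simultaneously: unlike the unscaled PGD analysis, here the preconditioner is data-dependent and must be expanded as $[\oY^\tp\oY]^{-1} = \oQ^\tp(\pSig^*)^{-1}\oQ + (\text{perturbation})$, and the perturbation interacts with $\frac1p\mathcal{P}_\Omega - \mathcal{I}$ in a way that is not obviously bounded by the generic spectral-gap lemma. Keeping this perturbation $\ell_2$-to-$\ell_{2,\infty}$ bounded — so that it does not destroy the incoherence budget used to invoke Lemma \ref{l_7} — is the delicate step, and is presumably exactly where the ``alternative framework'' advertised in the introduction does its work. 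I would isolate this into a short self-contained estimate bounding $\|[\oY^\tp\oY]^{-1}\oQ - \oQ^{-\tp}(\pSig^*)^{-1}\|$ in terms of $\dist_*(\oZ,\pZ^*)/\sigma_r^2$, prove it first, and then feed it into the two-piece decomposition above.
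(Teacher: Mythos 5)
Your high-level route (split the scaled gradient into a population term plus a sampling-deviation term, control the deviation with the deterministic Ramanujan-graph estimates, conclude by symmetry in $\pX\leftrightarrow\pY$) is the paper's route, but the step you single out as the delicate one is a detour, and the step you gloss over is the actual crux. On the first point: with $\pX=\oX\,\oQ$ and $\pY=\oY(\oQ)^{-\tp}$ one has \emph{exactly} $\oX(\oY)^\tp=\pX\pY^\tp$ and $\oY\left[(\oY)^\tp\oY\right]^{-1}\oQ=\pY(\pY^\tp\pY)^{-1}$ (your displayed decomposition of $\oX(\oY)^\tp-\pM^*$ carries spurious $\oQ$ factors; the correct identity is $\pX\pY^\tp-\pM^*=\pDeX(\pY^*)^\tp+\pX\pDeY^\tp$). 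So there is no need to expand the preconditioner as $\oQ^\tp(\pSig^*)^{-1}\oQ$ plus a perturbation, and no $\ell_2$-to-$\ell_{2,\infty}$ control of such a perturbation is required: the paper keeps $\pY(\pY^\tp\pY)^{-1}$ intact and uses the operator/row-norm bounds of Lemma \ref{lem:upperbound}, e.g. $\Vert\pY(\pY^\tp\pY)^{-1}(\pSig^*)^{1/2}\Vert\le\frac{1}{1-\alpha}$ and $\Vert(\pSig^*)^{1/2}(\pY^\tp\pY)^{-1}(\pSig^*)^{1/2}\Vert\le\frac{1}{(1-\alpha)^2}$, together with Lemma \ref{l_5} and the spectral-gap bound of Lemma \ref{l_10} (not Lemma \ref{l_7}, which belongs to the unscaled PGD analysis) to bound the three deviation pieces, giving $C_1\le0.033$ and $C_2\le0.046$. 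Note also that the deviation cannot be pushed down to $0.01$ as you suggest: the $\delta_d=\frac{1}{64}$ contribution to $\tilde\delta=\sqrt{2\big(\delta_d^2+\frac{C_0^2\mu^2r^2}{d_1\wedge d_2}\big)}$ alone is about $0.03$.

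The genuine gap is the population cross term $\la\pDeX(\pSig^*)^{1/2},\,\pX^*\pDeY^\tp\pY(\pY^\tp\pY)^{-1}(\pSig^*)^{1/2}\ra$, which your plan dismisses as ``small cross-terms controlled by $\dist_*/\sigma_r^*$.'' It is not small by any generic bound: Cauchy--Schwarz only gives about $\frac{1}{1-\alpha}\Vert\pDeX(\pSig^*)^{1/2}\Vert_F\Vert\pDeY(\pSig^*)^{1/2}\Vert_F$, i.e.\ roughly $0.56$ on each squared term after AM--GM, which would make the cross coefficient exceed the curvature coefficient once combined with the symmetric $\pY$-inequality --- no contraction, and certainly not $0.833$ versus $0.023$. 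The paper's resolution is the first-order optimality of the alignment matrix $\oQ$, namely the identity $\pX^\tp\pDeX\pSig^*=\pSig^*\pDeY^\tp\pY$ (Lemma 23 of \cite{TMC2021}), which turns the dominant part of this cross term into $\la\pY^\tp\pDeY\pSig^*\pDeY^\tp\pY,(\pY^\tp\pY)^{-1}\ra\ge0$, an inner product of PSD matrices, leaving only a residual bounded by $\frac{\alpha}{1-\alpha}\Vert\pDeX(\pSig^*)^{1/2}\Vert_F^2\approx0.112\,\Vert\pDeX(\pSig^*)^{1/2}\Vert_F^2$; the stated constants are then $0.833=1-C_1-\frac{C_2}{2}-\frac{\alpha}{1-\alpha}$ and $0.023=\frac{C_2}{2}$. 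Your proposal must incorporate this alignment-optimality identity (or an equivalent use of the minimizing property of $\oQ$) explicitly; deferring to ``the standard computation'' without it leaves the claimed constants unreachable.
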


\begin{proposition}[Local Smoothness Condition]\label{prop_smoothness}
For any $\oZ \in \mathcal{C}_2$ satisfying $\dist_*(\oZ,\pmb{Z}^*) \leq \frac{1}{10} \sigma_r^*$ and $\sqrt{n_1}\| \oX (\oY)^\tp \|_{2,\infty} \vee \sqrt{n_2}\| \oY (\oX)^\tp \|_{2,\infty} \leq (1+ \alpha)\sqrt{\mu r}\sigma_1^*$, if $\alpha = 0.1$, $\delta_d = \frac{1}{64}$ and $d_1\wedge d_2 \geq 100^2C_0^2\mu^2 r^2 \kappa^4$, we have
\bea
\|\nabla_{\pX}\mL_2(\oX, \oY)\left[(\oY)^\tp\oY \right]^{-1}\oQ (\pSig^*)^{1\over2}\|_F^2  
\leq 5.5\left[
\| \pDeX(\pSig^*)^{{1\over2}} \|_F^2 + 
\| \pDeY(\pSig^*)^{{1\over2}} \|_F^2 \right]
\eea
and
\bea
\|\nabla_{\pY}\mL_2(\oX, \oY)\left[ (\oX)^\tp\oX\right]^{-1}(\oQ)^{-\tp} (\pSig^*)^{1\over2}\|_F^2  
\leq 5.5\left[ 
\| \pDeX (\pSig^*)^{{1\over2}} \|_F^2 + 
\| \pDeY(\pSig^*)^{{1\over2}} \|_F^2\right]. 
\eea
\end{proposition}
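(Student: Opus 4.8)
# Proof Proposal for Proposition \ref{prop_smoothness} (Local Smoothness Condition)

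The plan is to reduce the smoothness bound to an estimate on $\nabla_{\pX}\mL_2(\oX,\oY)$ multiplied by the scaling factor $[(\oY)^\tp\oY]^{-1}$ and the alignment matrix $\oQ$, exploiting the same structure used in the proof of Proposition \ref{prop_curvature}. First I would write the scaled gradient explicitly: since $\nabla_{\pX}\mL_2(\oX,\oY)=\frac1p\mathcal{P}_\Omega(\oX(\oY)^\tp-\pM^*)\oY$, and using $\oX(\oY)^\tp - \pM^* = (\pDeX + \pX^*)(\oQ)^{-1}((\oQ)^\tp(\oY)^\tp) - \pX^*(\pY^*)^\tp$, I would expand $\oX(\oY)^\tp - \pM^*$ in terms of $\pDeX(\pSig^*)^{1/2}$ and $\pDeY(\pSig^*)^{1/2}$ modulo the alignment. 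A cleaner route: multiply through so that the quantity becomes $\frac1p\mathcal{P}_\Omega(\oX(\oY)^\tp-\pM^*)\,\oY\,[(\oY)^\tp\oY]^{-1}\oQ(\pSig^*)^{1/2}$, recognize $\oY[(\oY)^\tp\oY]^{-1}$ as (essentially) the pseudo-inverse acting on the column space of $\oY$, and bound its operator norm by $1/\sigma_r(\oY)$, which under $\dist_*(\oZ,\pZ^*)\le\frac{1}{10}\sigma_r^*$ is controlled by a constant multiple of $1/\sqrt{\sigma_r^*}$ (Weyl-type perturbation, as in Lemma \ref{lem:alignment1}'s regime).

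Next I would control $\|\mathcal{P}_\Omega(\oX(\oY)^\tp-\pM^*)\|$ or, more precisely, the relevant bilinear form, by splitting $\oX(\oY)^\tp - \pM^*$ into a ``first-order'' part $\pDeX(\pY^*)^\tp + \pX^*(\pDeY)^\tp$ (after absorbing the alignment $\oQ$) and a ``second-order'' part $\pDeX(\pDeY)^\tp$. For the first-order terms, I expect to invoke the deterministic analogues of the restricted isometry / incoherence estimates — exactly the innovative lemmas the authors flag (Lemma \ref{l_6}, Lemma \ref{l_7}) together with assumptions ${\rm \pmb{\mathscr{A}}_1}$, ${\rm \pmb{\mathscr{A}}_2}$ and the Ramanujan spectral gap ${\rm \pmb{\mathscr{G}}_1}$–${\rm \pmb{\mathscr{G}}_3}$ — to show $\frac1p\|\mathcal{P}_\Omega(\pDeX(\pY^*)^\tp)\|_F^2 \lesssim \|\pDeX(\pSig^*)^{1/2}\|_F^2$ and similarly for the other term, where the implicit constant is close to $1$ once $\delta_d\le\frac1{64}$ and $d_1\wedge d_2 \gtrsim C_0^2\mu^2 r^2\kappa^4$. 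For the second-order term $\pDeX(\pDeY)^\tp$, I would bound its contribution using the incoherence membership $\oZ\in\mathcal{C}_2$ and the smallness $\dist_*(\oZ,\pZ^*)\le\frac1{10}\sigma_r^*$, so that this term contributes only a lower-order multiple (something like $\frac{1}{100}$ of the leading constant) of $\|\pDeX(\pSig^*)^{1/2}\|_F^2 + \|\pDeY(\pSig^*)^{1/2}\|_F^2$. Combining, with $(a+b+c)^2 \le 3(a^2+b^2+c^2)$ bookkeeping and tracking the numerical constants, should yield the stated $5.5$.

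The main obstacle I anticipate is twofold. First, correctly threading the alignment matrix $\oQ$ through all the factors: the scaled gradient naturally produces $\oX(\oY)^\tp$, but the residuals are defined via $\oX\oQ$ and $\oY(\oQ)^{-\tp}$, so one must verify that $\oX(\oY)^\tp - \pM^* = (\oX\oQ)(\oY(\oQ)^{-\tp})^\tp - \pX^*(\pY^*)^\tp = \pDeX(\pY^*+\pDeY)^\tp + \pX^*(\pDeY)^\tp$ and that the scaling factor $\oY[(\oY)^\tp\oY]^{-1}\oQ$ simplifies compatibly — this is where the identity $\oY[(\oY)^\tp\oY]^{-1}\oQ = (\oY\oQ^{-\tp})[(\oY\oQ^{-\tp})^\tp(\oY\oQ^{-\tp})]^{-1}$ (a reparametrization invariance of the pseudo-inverse) is the crucial algebraic step. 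Second, the deterministic sampling forces all concentration-type bounds to come from the spectral-gap estimates on $\pmb{G}$ rather than from randomness, so the bound $\|\frac1p\mathcal{P}_\Omega(\pDeX(\pY^*)^\tp)\|_F$ must be extracted from Lemma \ref{l_6}/Lemma \ref{l_7} with constants sharp enough that the final multiplier stays at $5.5$; keeping these constants under control (rather than just $O(1)$) while juggling the $\alpha=0.1$ incoherence slack and $\delta_d=\frac1{64}$ is the genuinely delicate part, and it is essentially the same computation already carried out for Proposition \ref{prop_curvature}, now in squared-Frobenius form without the cancellation that produces the curvature lower bound.
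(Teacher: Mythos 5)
There is a genuine gap in your ``cleaner route.'' Decoupling the scaling factor from $(\pSig^*)^{1\over2}$ --- i.e.\ bounding $\Vert \oY[(\oY)^\tp\oY]^{-1}\Vert \le 1/\sigma_r(\oY) \approx C/\sqrt{\sigma_r^*}$ and letting the trailing $(\pSig^*)^{1\over2}$ contribute $\Vert(\pSig^*)^{1\over2}\Vert=\sqrt{\sigma_1^*}$ separately --- loses a factor $\sqrt{\kappa}$ in each gradient block, hence a factor $\kappa$ after squaring. Your argument would then deliver a bound of the form $\lesssim \kappa\,[\Vert\pDeX(\pSig^*)^{1\over2}\Vert_F^2+\Vert\pDeY(\pSig^*)^{1\over2}\Vert_F^2]$, not the $\kappa$-free constant $5.5$, which defeats the entire purpose of the scaled gradient. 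The paper never separates these factors: after the reparametrization $\pY=\oY(\oQ)^{-\tp}$ (the invariance identity $\oY[(\oY)^\tp\oY]^{-1}\oQ=\pY(\pY^\tp\pY)^{-1}$ that you do state correctly), it keeps $(\pSig^*)^{1\over2}$ glued to the inverse Gram matrix and uses the scale-invariant estimates of Lemma \ref{lem:upperbound}, namely $\Vert \pY(\pY^\tp\pY)^{-1}(\pSig^*)^{1\over2}\Vert\le\frac{1}{1-\alpha}$ and $\Vert(\pSig^*)^{1\over2}(\pY^\tp\pY)^{-1}(\pSig^*)^{1\over2}\Vert\le\frac{1}{(1-\alpha)^2}$; this is exactly where the condition-number independence is earned, and any proof that drops it cannot reach the stated constant.

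Two secondary points. First, the lemmas you plan to invoke are not the ones doing the work here: Lemmas \ref{l_6} and \ref{l_7} serve the unscaled PGD analysis in the appendix, whereas the paper's proof of this proposition splits the scaled gradient into a sampling-deviation part $B_1$ involving $\frac1p\mP_{\Omega}-\mI$ (bounded by duality against a unit-Frobenius test matrix $\widetilde{\pmb{R}}$, then Lemma \ref{l_5} for the piece $\pDeX(\pY^*)^\tp$ lying in $T$ and the spectral-gap bound $\Vert\frac1p\pmb{G}-\pmb{E}\Vert\le\frac{C_0\sqrt{n_1n_2}}{\sqrt{d_1\wedge d_2}}$ of Lemma \ref{l_10} combined with the $\ell_{2,\infty}$ bounds for the remaining pieces) and a population part $B_2=\Vert(\pX\pY^\tp-\pM^*)\pY(\pY^\tp\pY)^{-1}(\pSig^*)^{1\over2}\Vert_F$, which your sketch never isolates; under the stated parameters the sampling terms contribute only $C_1\le 0.033$, $C_2\le 0.046$, and it is $B_2$ (via $(\frac{2+\alpha}{1-\alpha})^2\approx 5.44$) that supplies essentially all of the constant $5.5$. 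Second, your treatment of the second-order term $\pDeX\pDeY^\tp$ by incoherence plus $\dist_*\le\frac1{10}\sigma_r^*$ alone is not enough: a naive $\ell_{2,\infty}$ estimate of $\frac1p\Vert\mP_{\Omega}(\pDeX\pDeY^\tp)\Vert_F^2$ produces a $\mu r\kappa^2$ factor, and the smallness to the level $\approx 1/100$ only comes from the spectral-gap term together with the degree assumption $d_1\wedge d_2\ge 100^2C_0^2\mu^2r^2\kappa^4$, i.e.\ one must route this term through the $\frac1p\mP_{\Omega}-\mI$ (or Lemma \ref{l_6}-type bilinear) estimate rather than through incoherence alone.
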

We set $\nabla_{\oX}\mL_2 = \nabla_{\pX}\mL_2(\oX, \oY) \left[ (\oY)^\tp\oY\right]^{-1}$ and $\nabla_{ \oY}\mL_2 = \nabla_{\pY}\mL_2(\oX, \oY) \left[ (\oX)^\tp\oX\right]^{-1}$ for simplification. 
We say that $\mathcal{L}_2(\pmb{Z})$ satisfies the local curvature condition ${\bf LCC}(\alpha, \beta)$, if $$\la \pDeX (\pSig^*)^{1\over2}, \nabla_{\pX}\mL_2 \oQ (\pSig^*)^{1\over2} \ra \geq  \alpha \| \pDe_{\pX} (\pSig^*)^{1\over2} \|_F^2 - \beta \| \pDe_{\pY} (\pSig^*)^{1\over2} \|_F^2$$ 
and 
$$\la \pDe_{\pY}(\pSig^*)^{1\over2}, \nabla_{\pY}\mL_2(\oQ)^{-\tp} (\pSig^*)^{1\over2} \ra \geq \alpha \| \pDe_{\pY} (\pSig^*)^{1\over2} \|_F^2 - \beta \| \pDe_{\pX} (\pSig^*)^{1\over2} \|_F^2$$ 
hold for any $\oZ \in \mathcal{C}_2$ satisfying $\dist_*(\oZ,\pmb{Z}^*) \leq \frac{1}{10}\sigma_r^*$ and $\sqrt{n_1}\| \oX(\oY)^\tp \|_{2,\infty} \vee \sqrt{n_2}\| \oY (\oX)^\tp \|_{2,\infty} \leq 1.1\sqrt{\mu r}\sigma_1^*$. And $\mathcal{L}_2(\pmb{Z})$ satisfies the local smoothness condition ${\bf LSC}(\gamma)$, if $$\| \nabla_{\pX}\mL_2 \oQ (\pSig^*)^{1\over2}\|_F^2  
\leq \gamma(
\| \pDe_{\pX}(\pSig^*)^{{1\over2}} \|_F^2 + 
\| \pDe_{\pY}(\pSig^*)^{{1\over2}} \|_F^2)$$
 and 
$$\| \nabla_{ \pY}\mL_2 (\oQ)^{-\tp} (\pSig^*)^{1\over2}\|_F^2  
\leq \gamma(
\| \pDe_{\pX} (\pSig^*)^{{1\over2}} \|_F^2 + 
\| \pDe_{\pY} (\pSig^*)^{{1\over2}} \|_F^2)
$$
 hold for any $\oZ \in \mathcal{C}_2$ satisfying $\dist_*(\oZ,\pmb{Z}^*) \leq \frac{1}{10}\sigma_r^*$ and $\sqrt{n_1}\| \oX(\oY)^\tp \|_{2,\infty} \vee \sqrt{n_2}\| \oY(\oX)^\tp \|_{2,\infty} \leq 1.1\sqrt{\mu r}\sigma_1^*$.
Using these two conditions, one can show that the iterates contract.

\begin{proposition}\label{prop3}
Consider the iterate $\pmb{Z}^{k+1} =
\left(                
  \begin{array}{ccc}   
   \pmb{X}^{k+1} \\  
   \pmb{Y}^{k+1} \\  
  \end{array}
\right)
= \mathcal{P}_{\mathcal{C}_2}
\left(                
  \begin{array}{ccc}   
   \pmb{X}^k - \eta \nabla_{(\pX^k)}\mL_2\\  
   \pmb{Y}^k - \eta \nabla_{( \pY^k)}\mL_2\\  
  \end{array}
\right)$, and assume $\dist_*(\pZ^k,\pmb{Z}^*) \leq \frac{1}{10}\sigma_r^*$, $\sqrt{n_1}\| \pX^k(\pY^k){}^\tp \|_{2,\infty} \vee \sqrt{n_2}\| \pY^k(\pX^k){}^\tp \|_{2,\infty} \leq 1.1\sqrt{\mu r}\sigma_1^*$. If the loss function $\mathcal{L}_2(\pmb{Z})$ satisfies the local curvature condition ${\bf LCC}(\alpha, \beta)$ and local smoothness condition ${\bf LSC}(\gamma)$ with stepsize $\eta \leq \frac{\alpha - \beta}{\gamma}$, then
\bea
\dist_{*}^2(\pZ^{k+1}, \pZ^*) \leq C \dist_{*}^2(\pZ^{k}, \pZ^*),
\eea
with $C = 1-2(\alpha - \beta)\eta + 2\gamma \eta^2 < 1$. Moreover, 
\bea
\Vert \pX^{k+1}(\pY^{k+1})^\tp - \pM^* \Vert_F \leq 1.5\sqrt{C}\dist_{*}(\pZ^{k}, \pZ^*).
\eea
\end{proposition}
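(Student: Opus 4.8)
The plan is to run the standard ``one‑step contraction from local curvature and smoothness'' argument, using the assumed conditions ${\bf LCC}(\alpha,\beta)$ and ${\bf LSC}(\gamma)$, and to deal separately with the projection $\mathcal{P}_{\mathcal{C}_2}$. Let $\oQ$ be the optimal alignment matrix between $\pZ^k$ and $\pZ^*$, which exists by Lemma~\ref{lem:alignment1} since $\dist_*(\pZ^k,\pZ^*)\le\frac1{10}\sigma_r^*<\sigma_r^*$, and set $\pDeX=\pX^k\oQ-\pX^*$, $\pDeY=\pY^k\oQ^{-\tp}-\pY^*$, so $\dist_*^2(\pZ^k,\pZ^*)=\|\pDeX(\pSig^*)^{1/2}\|_F^2+\|\pDeY(\pSig^*)^{1/2}\|_F^2$. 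Write $\oX^{k+1}=\pX^k-\eta\nabla_{(\pX^k)}\mL_2$, $\oY^{k+1}=\pY^k-\eta\nabla_{(\pY^k)}\mL_2$ for the un‑projected step, so that $\pZ^{k+1}=\mathcal{P}_{\mathcal{C}_2}(\oZ^{k+1})$. Since $\oX^{k+1}\oQ-\pX^*=\pDeX-\eta\,\nabla_{(\pX^k)}\mL_2\,\oQ$ (and likewise for $Y$) and $\oQ$ is admissible in the infimum defining $\dist_*(\oZ^{k+1},\pZ^*)$, expanding
\begin{align*}
\|(\oX^{k+1}\oQ-\pX^*)(\pSig^*)^{1/2}\|_F^2
&=\|\pDeX(\pSig^*)^{1/2}\|_F^2-2\eta\,\la\pDeX(\pSig^*)^{1/2},\,\nabla_{(\pX^k)}\mL_2\,\oQ(\pSig^*)^{1/2}\ra\\
&\quad+\eta^2\|\nabla_{(\pX^k)}\mL_2\,\oQ(\pSig^*)^{1/2}\|_F^2 ,
\end{align*}
lower‑bounding the cross term by ${\bf LCC}(\alpha,\beta)$ and upper‑bounding the quadratic term by ${\bf LSC}(\gamma)$ (both applicable, since $\pZ^k$ obeys the distance and incoherence bounds these conditions require), adding the $X$‑ and $Y$‑inequalities, and collecting the coefficients of $\|\pDeX(\pSig^*)^{1/2}\|_F^2$ and $\|\pDeY(\pSig^*)^{1/2}\|_F^2$ gives $\dist_*^2(\oZ^{k+1},\pZ^*)\le C\,\dist_*^2(\pZ^k,\pZ^*)$ with $C=1-2(\alpha-\beta)\eta+2\gamma\eta^2$, which is $<1$ once $0<\eta<(\alpha-\beta)/\gamma$. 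In particular $\dist_*(\oZ^{k+1},\pZ^*)\le\frac1{10}\sigma_r^*$, so $\oZ^{k+1}$ stays in the basin; and substituting the constants $\alpha=0.833$, $\beta=0.023$, $\gamma=5.5$ from Propositions~\ref{prop_curvature} and \ref{prop_smoothness} yields $C\le1-1.6\eta+11\eta^2$, matching Theorem~\ref{thm_scaledPGD}.

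The step I expect to be the main obstacle is passing from $\oZ^{k+1}$ to $\pZ^{k+1}=\mathcal{P}_{\mathcal{C}_2}(\oZ^{k+1})$, i.e.\ proving $\dist_*(\pZ^{k+1},\pZ^*)\le\dist_*(\oZ^{k+1},\pZ^*)$. Its closed form $\pX^{k+1}_i=(1\wedge B/(\sqrt{n_1}\|\oX^{k+1}_i(\oY^{k+1})^\tp\|_2))\,\oX^{k+1}_i$ shows $\mathcal{P}_{\mathcal{C}_2}$ rescales a row only when that row already overshoots the incoherence threshold, and since $\pZ^*$ obeys ${\rm \pmb{\mathscr{A}}_1}$ while $B=(1+\alpha)\sqrt{\mu r}\sigma_1^*$ with $\alpha=0.1$ keeps the rows of any aligned $\pX^*$ strictly inside the constraint, this rescaling drags the row toward the aligned $\pZ^*$; the elementary fact at work is $\|c\pmb{v}-\pmb{w}\|_2\le\|\pmb{v}-\pmb{w}\|_2$ whenever $c\in[0,1]$ and $(1+c)\|\pmb{v}\|_2^2\ge2\la\pmb{v},\pmb{w}\ra$. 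The subtlety is that $\mathcal{P}_{\mathcal{C}_2}$ is an orthogonal projection onto a convex set only in the metric weighted by $(\oY^{k+1})^\tp\oY^{k+1}$ and $(\oX^{k+1})^\tp\oX^{k+1}$, not in the $\pSig^*$‑weighted metric of $\dist_*$; to reconcile them I would use that $\oZ^{k+1}$ remains in the basin (established above), whence those Gram matrices are $\oQ$‑congruent to perturbations of $\pSig^*$ of relative size $\mathcal{O}(\dist_*(\pZ^k,\pZ^*)/\sigma_r^*)$, and then carry the corrections through the row‑wise comparison. I would isolate this as a supporting lemma, in the spirit of the incoherence‑preserving projections of \cite{CW2015,TMC2021}; everything else is bookkeeping.

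Granting this, $\dist_*^2(\pZ^{k+1},\pZ^*)\le C\,\dist_*^2(\pZ^k,\pZ^*)$, and since $C<1$ also $\dist_*(\pZ^{k+1},\pZ^*)\le\sqrt C\,\dist_*(\pZ^k,\pZ^*)\le\frac1{10}\sigma_r^*$, so Lemma~\ref{lem:alignment1} supplies an optimal alignment $\oQ'$ for $\pZ^{k+1}$. With $\pDe_{\pX}^{k+1}=\pX^{k+1}\oQ'-\pX^*$ and $\pDe_{\pY}^{k+1}=\pY^{k+1}(\oQ')^{-\tp}-\pY^*$, the identity $\pX^{k+1}(\pY^{k+1})^\tp-\pM^*=\pDe_{\pX}^{k+1}(\pY^*)^\tp+\pX^*(\pDe_{\pY}^{k+1})^\tp+\pDe_{\pX}^{k+1}(\pDe_{\pY}^{k+1})^\tp$, together with the orthonormality of $\pmb{U}^*$, $\pmb{V}^*$ (turning the first two terms into $\|\pDe_{\pX}^{k+1}(\pSig^*)^{1/2}\|_F$ and $\|\pDe_{\pY}^{k+1}(\pSig^*)^{1/2}\|_F$) and the estimate $\|\pDe_{\pY}^{k+1}(\pSig^*)^{-1/2}\|\le\dist_*(\pZ^{k+1},\pZ^*)/\sigma_r^*\le\frac1{10}$ on the cross term, yields by the triangle inequality and Cauchy--Schwarz $\|\pX^{k+1}(\pY^{k+1})^\tp-\pM^*\|_F\le\sqrt{(11/10)^2+1}\,\dist_*(\pZ^{k+1},\pZ^*)\le1.5\sqrt C\,\dist_*(\pZ^k,\pZ^*)$, which is the claimed bound.
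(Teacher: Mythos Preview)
Your approach is correct and essentially identical to the paper's: expand the squared $\dist_*$ at the optimal alignment $\oQ$, apply ${\bf LCC}(\alpha,\beta)$ and ${\bf LSC}(\gamma)$ to the cross and quadratic terms, and finish with the triangle-inequality bound on $\|\pX\pY^\tp-\pM^*\|_F$. The step you flag as the main obstacle --- non-expansiveness of $\mathcal{P}_{\mathcal{C}_2}$ in the $\dist_*$ metric --- is exactly Lemma~\ref{non_expan} (quoted from \cite[Lemma~19]{TMC2021}), so you can simply invoke it rather than re-derive it.
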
 

\begin{proof}[Proof of Proposition \ref{prop3}]
We set $\pDeX^k = \pX^k\pQ^k - \pX^*$, $\pDeY^k = \pY^k(\pQ^k)^{-\tp} - \pY^*$, $\nabla_{(\pX^k)}\mL_2 = \nabla_{\pX}\mL_2(\pX^k, \pY^k) \left[(\pY^k)^\tp\pY^k\right]^{-1}$ and $\nabla_{(\pY^k)}\mL_2 = \nabla_{\pY}\mL_2(\pX^k, \pY^k) \left[(\pX^k)^\tp\pX^k \right]^{-1}$, where $\pQ^k$ is the optimal alignment matrix between $\pZ^k$ and $\pZ^*$.
Based on the iterates, we can get
\begin{align*}
&\dist_{*}^2(\pZ^{k+1}, \pZ^*) 
\leq \| (\pX^{k+1} \pQ^k -\pX^*)(\pSig^*)^{\frac{1}{2}}  \|_F^2 
+ \| [\pY^{k+1} (\pQ^k){}^{-\tp} -\pY^*](\pSig^*)^{\frac{1}{2}}  \|_F^2\\
\leq& \| [(\pX^k - \eta \nabla_{( \pX^k)}\mL_2)\pQ^k - \pX^*](\pSig^*)^{\frac{1}{2}}  \|_F^2 
+ \| [(\pY^k - \eta \nabla_{( \pY^k)}\mL_2) (\pQ^k)^{-\tp} - \pY^*] (\pSig^*)^{\frac{1}{2}} \|_F^2,
\end{align*}
since the the scaled projection $\mathcal{P}_{\mathcal{C}_2}$ is non-expansive and incoherent from Lemma \ref{non_expan}. 
Here, the first inequality holds based on Definition \ref{def_dist*}.

Since $\mathcal{L}_2(\pmb{Z})$ satisfies the local curvature condition ${\bf LCC}(\alpha, \beta)$ and local smoothness condition ${\bf LSC}(\gamma)$, we have
\begin{align*}
&\la \pDe_{\pX}^k(\pSig^*)^{1\over2}, \nabla_{(\pX^k)}\mL_2 \pQ^k (\pSig^*)^{1\over2} \ra 
+ \la \pDe_{\pY}^k(\pSig^*)^{1\over2}, \nabla_{( \pY^k)}\mL_2(\pQ^k){}^{-\tp}(\pSig^*)^{1\over2} \ra \\
\geq& (\alpha - \beta) \left[ \| \pDe_{\pX}^k (\pSig^*)^{1\over2} \|_F^2 + \| \pDe_{\pY}^k (\pSig^*)^{1\over2} \|_F^2 \right]
\end{align*}
and 
\bea
\| \nabla_{(\pX^k)}\mL_2 \pQ^k(\pSig^*)^{1\over2}\|_F^2 + \| \nabla_{( \pY^k)}\mL_2 (\pQ^k){}^{-\tp}(\pSig^*)^{1\over2}\|_F^2
 \leq 2\gamma\left[ \| \pDe_{\pX}^k(\pSig^*)^{{1\over2}} \|_F^2 + \|\pDe_{\pY}^k(\pSig^*)^{{1\over2}} \|_F^2 \right]
\eea
for any $\pZ^k \in \mathcal{C}_2$ satisfying $\dist_*(\pZ^k,\pmb{Z}^*) \leq \frac{1}{10}\sigma_r^*$, $\sqrt{n_1}\| \pX^k(\pY^k)^\tp \|_{2,\infty} \vee \sqrt{n_2}\| \pY^k(\pX^k){}^\tp \|_{2,\infty} \leq 1.1\sqrt{\mu r}\sigma_1^*$.

Then it is easy to find 
\begin{align*}
&\dist_{*}^2(\pZ^{k+1}, \pZ^*)\\
\leq& \dist_{*}^2(\pZ^{k}, \pZ^*) -2\eta \left[ \la \pDeX^k (\pSig^*)^{1\over2}, \nabla_{( \pX^k)}\mL_2\pQ^k (\pSig^*)^{1\over2} \ra 
+ \la \pDeY^k (\pSig^*)^{1\over2}, \nabla_{( \pY^k)}\mL_2(\pQ^k)^{-\tp} (\pSig^*)^{1\over2} \ra\right] \\
&+ \eta^2 \Big(\|\nabla_{( \pX^k)}\mL_2\pQ^k(\pSig^*)^{1\over2}\|_F^2 
+ \|\nabla_{( \pY^k)}\mL_2(\pQ^k)^{-\tp}(\pSig^*)^{1\over2}\|_F^2\Big) \\
\leq& [1-2(\alpha - \beta)\eta + 2\gamma \eta^2] \dist_{*}^2(\pZ^{k}, \pZ^*).
\end{align*}
Since $\pX^{k} (\pY^{k})^\tp - \pM^* = \pDeX^k(\pY^*)^\tp + \pX^* (\pDeY^k)^\tp + \pDeX^k(\pDeY^k)^\tp$, $\pX^* = \pU^*(\pSig^*)^{\frac{1}{2}}$ and $\pY^* = \pV^*(\pSig^*)^{\frac{1}{2}}$, the triangle inequality yields that 
\bea
\Vert \pX^{k}(\pY^{k})^\tp - \pM^* \Vert_F 
\leq \| \pDeX^k (\pSig^*)^{\frac{1}{2}} \|_F  +  \| \pDeY^k (\pSig^*)^{\frac{1}{2}} \|_F  + \| \pDeX^k(\pDeY^k)^\tp \|_F. 
\eea
After simple computation, we have 
\begin{align*}
 \| \pDeX^k (\pDeY^k)^\tp \|_F  
 =&  \frac{1}{2}\| \pDeX^k (\pSig^*)^{\frac{1}{2}}[\pDeY^k(\pSig^*)^{-\frac{1}{2}}]^\tp \|_F  + \frac{1}{2}\| \pDeY^k(\pSig^*)^{\frac{1}{2}}[\pDeX^k(\pSig^*)^{-\frac{1}{2}}]^\tp \|_F \\
 \leq& \frac{1}{2} \left(\|\pDeY^k(\pSig^*)^{-\frac{1}{2}} \| \vee \| \pDeX^k(\pSig^*)^{-\frac{1}{2}}\|\right) 
 \cdot \left(\| \pDeX^k(\pSig^*)^{\frac{1}{2}} \|_F + \| \pDeY^k(\pSig^*)^{\frac{1}{2}} \|_F\right).
\end{align*}
Since $\dist_*(\pZ^k, \pZ^*) \leq \frac{1}{10}\sigma_r^*$, we have
\begin{align*}
\|\pDeX^k(\pSig^*)^{-\frac{1}{2}} \|_F^2 + \| \pDeY^k(\pSig^*)^{-\frac{1}{2}}\|_F^2 
\leq \frac{1}{(\sigma_r^*)^2}\left( \|\pDeX^k(\pSig^*)^{\frac{1}{2}} \|_F^2 + \| \pDeY^k(\pSig^*)^{\frac{1}{2}}\|_F^2 \right) \leq \frac{1}{100}.
\end{align*}
It implies that 
\bea
\|\pDeX^k(\pSig^*)^{-\frac{1}{2}} \| \vee \| \pDeY^k(\pSig^*)^{-\frac{1}{2}}\| \leq \frac{1}{10}.
\eea
Thus, we can get 
\begin{align*}
\Vert \pX^{k}(\pY^{k})^\tp - \pM^* \Vert_F 
\leq& \frac{21}{20}\left[ \| \pDeX^k (\pSig^*)^{\frac{1}{2}} \|_F  +  \| \pDeY^k (\pSig^*)^{\frac{1}{2}} \|_F \right] \\
\leq& \frac{21\sqrt{2}}{20} \sqrt{\| \pDeX^k (\pSig^*)^{\frac{1}{2}} \|_F^2  +  \| \pDeY^k (\pSig^*)^{\frac{1}{2}} \|_F^2 } \\
\leq& 1.5\dist_*(\pZ^k, \pZ^*),
\end{align*}
which complete the proof.
\end{proof}

If the initialization behaves well, then Proposition \ref{prop3} indicates that the iterates will stay in the desired basin. Thus, scaled PGD can converge linearly to the ground-truth.

\begin{theorem}\label{thm:initialization2}
Suppose that $\pmb{M}^*$ is $\mu$-incoherent and the sample rate $p \geq 2000C_0^2\frac{\mu^2\kappa^2r^3}{n_1\wedge n_2}$, then the spectral initialization before projection $\overline{\pmb{Z}}{}^0 = \left[                 
  \begin{array}{ccc}   
   \oU{}^0(\oSig{}^0)^{\frac{1}{2}}\\  
    \oV{}^0(\oSig{}^0)^{\frac{1}{2}}\\  
  \end{array}
\right]$
satisfies
\bea
\dist_*(\overline{\pmb{Z}}{}^0, \pmb{Z}^*) \leq 0.1 \sigma_r^*.
\eea
\end{theorem}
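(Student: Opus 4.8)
\emph{Proof sketch.} The plan is to reduce the statement to a spectral‑norm perturbation bound on the debiased observation matrix and then transfer that bound to the balanced factors in the $\dist_*$ metric. Write $\pmb{M}^0=\frac1p\mathcal{P}_{\Omega}(\pmb{M}^*)$ for the spectral estimate (as in Algorithm~\ref{alg:scaledPGD}) and $\widehat{\pmb{M}}^0=\oU{}^0\,\oSig{}^0(\oV{}^0)^\tp$ for its rank‑$r$ truncation, so that the quantity to be bounded, $\overline{\pmb{Z}}{}^0$, is exactly the balanced factorization $[\,\oU{}^0(\oSig{}^0)^{1/2};\ \oV{}^0(\oSig{}^0)^{1/2}\,]$ of $\widehat{\pmb{M}}^0$. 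The argument will run in three steps: (i) control $\|\pmb{M}^0-\pmb{M}^*\|$ using the spectral gap of the Ramanujan graph and the incoherence of $\pmb{M}^*$; (ii) pass from $\pmb{M}^0$ to its rank‑$r$ truncation $\widehat{\pmb{M}}^0$; (iii) convert the resulting spectral‑norm closeness of $\widehat{\pmb{M}}^0$ to $\pmb{M}^*$ into $\dist_*$‑closeness of $\overline{\pmb{Z}}{}^0$ to $\pmb{Z}^*$.

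For step (i) I would use ${\rm \pmb{\mathscr{G}}_1}$ and ${\rm \pmb{\mathscr{G}}_2}$ to write $\pmb{G}=p\,\pmb{E}+\pmb{\Gamma}$, where $\pmb{E}$ is the $n_1\times n_2$ all‑ones matrix (the leading singular triple of $\pmb{G}$ being $(\sqrt{d_1d_2},\tfrac1{\sqrt{n_1}}\pmb{1},\tfrac1{\sqrt{n_2}}\pmb{1})$ and $\sqrt{d_1d_2}/\sqrt{n_1n_2}=p$), so that $\|\pmb{\Gamma}\|=\sigma_2(\pmb{G})\le\tfrac{C_0}{2}(\sqrt{d_1}+\sqrt{d_2})$; combining with ${\rm \pmb{\mathscr{G}}_3}$ in the form $d_1=pn_2$, $d_2=pn_1$ gives $\|\tfrac1p\pmb{\Gamma}\|\le C_0\sqrt{n_1\vee n_2}/\sqrt p$. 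Since $\pmb{E}\circ\pmb{M}^*=\pmb{M}^*$, one gets $\pmb{M}^0-\pmb{M}^*=(\tfrac1p\pmb{\Gamma})\circ\pmb{M}^*$, whose operator norm I would bound by testing against unit vectors $\pmb{a},\pmb{b}$ and rewriting the bilinear form as $\sum_{\ell=1}^r(\pmb{p}^{(\ell)})^\tp(\tfrac1p\pmb{\Gamma})\pmb{q}^{(\ell)}$, with $(\pmb{p}^{(\ell)})_i=a_i\big(\pmb{U}_i^*(\pmb{\Sigma}^*)^{1/2}\big)_\ell$ and $(\pmb{q}^{(\ell)})_j=b_j\big(\pmb{V}_j^*(\pmb{\Sigma}^*)^{1/2}\big)_\ell$; two applications of Cauchy--Schwarz together with ${\rm \pmb{\mathscr{A}}_1}$ (which yields $\sum_\ell\|\pmb{p}^{(\ell)}\|_2^2=\sum_i a_i^2\|\pmb{U}_i^*(\pmb{\Sigma}^*)^{1/2}\|_2^2\le\mu r\sigma_1^*/n_1$, and likewise $\le\mu r\sigma_1^*/n_2$ for $\pmb{q}^{(\ell)}$) give $\|\pmb{M}^0-\pmb{M}^*\|\le\|\tfrac1p\pmb{\Gamma}\|\cdot\mu r\sigma_1^*/\sqrt{n_1n_2}\le C_0\mu r\sigma_1^*/\sqrt{p(n_1\wedge n_2)}$. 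Plugging in $p\ge 2000C_0^2\mu^2\kappa^2r^3/(n_1\wedge n_2)$ and $\sigma_1^*=\kappa\sigma_r^*$ yields $\|\pmb{M}^0-\pmb{M}^*\|\le\sigma_r^*/\sqrt{2000\,r}$. (If this operator‑norm estimate is already recorded as one of the paper's deterministic‑sampling lemmas, e.g.\ Lemma~\ref{l_6} or Lemma~\ref{l_7}, I would simply cite it.)

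For steps (ii)--(iii): the truncated SVD keeps the top $r$ singular values of $\pmb{M}^0$, so $\|\widehat{\pmb{M}}^0-\pmb{M}^0\|=\sigma_{r+1}(\pmb{M}^0)$, and by Weyl's inequality (with $\sigma_{r+1}(\pmb{M}^*)=0$) this is $\le\|\pmb{M}^0-\pmb{M}^*\|$; hence $\|\widehat{\pmb{M}}^0-\pmb{M}^*\|\le 2\|\pmb{M}^0-\pmb{M}^*\|$, and since $\widehat{\pmb{M}}^0-\pmb{M}^*$ has rank $\le 2r$, $\|\widehat{\pmb{M}}^0-\pmb{M}^*\|_F\le 2\sqrt{2r}\,\|\pmb{M}^0-\pmb{M}^*\|\le \tfrac{2\sqrt2}{\sqrt{2000}}\sigma_r^*<0.065\,\sigma_r^*$, which in particular is $\le\tfrac12\sigma_r^*$ (so $\sigma_r(\widehat{\pmb{M}}^0)\ge\tfrac12\sigma_r^*$ and the balanced factorization is well defined). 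The last step is the deterministic counterpart of the spectral‑initialization lemma of \cite{TMC2021}: if two rank‑$r$ matrices $\widehat{\pmb{M}}^0=\widehat{\pmb{X}}^0(\widehat{\pmb{Y}}^0)^\tp$ and $\pmb{M}^*=\pmb{X}^*(\pmb{Y}^*)^\tp$, each in balanced form, satisfy $\|\widehat{\pmb{M}}^0-\pmb{M}^*\|\le\tfrac12\sigma_r^*$, then $\dist_*(\overline{\pmb{Z}}{}^0,\pmb{Z}^*)\le c_0\|\widehat{\pmb{M}}^0-\pmb{M}^*\|_F$ for an absolute constant $c_0$. I would prove this by splitting the residual $\pmb{E}:=\widehat{\pmb{M}}^0-\pmb{M}^*$ symmetrically between the two factors: choosing the invertible $\pmb{Q}$ for which $\widehat{\pmb{X}}^0\pmb{Q}-\pmb{X}^*=\tfrac12\pmb{E}\,\pmb{V}^*(\pmb{\Sigma}^*)^{-1/2}$ up to a term of Frobenius norm $\lesssim\|\pmb{E}\|^2/\sigma_r^*$ (and symmetrically for $\widehat{\pmb{Y}}^0\pmb{Q}^{-\tp}-\pmb{Y}^*$), so that the $(\pmb{\Sigma}^*)^{1/2}$‑weighted residuals equal $\tfrac12\pmb{E}\pmb{V}^*$ and $\tfrac12\pmb{E}^\tp\pmb{U}^*$ up to lower order, each of Frobenius norm $\le\tfrac12\|\pmb{E}\|_F$; this gives $\dist_*^2(\overline{\pmb{Z}}{}^0,\pmb{Z}^*)\le\tfrac12\|\pmb{E}\|_F^2$ to leading order, the second‑order corrections being harmless because $\|\pmb{E}\|\le\tfrac12\sigma_r^*$. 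Combined with step (ii), $\dist_*(\overline{\pmb{Z}}{}^0,\pmb{Z}^*)\le c_0\cdot 0.065\,\sigma_r^*\le 0.1\,\sigma_r^*$ for the resulting $c_0$ (barely above $1/\sqrt2$), which is exactly the slack the constant $2000$ in the hypothesis is there to leave.

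I expect the main obstacle to be step (iii). Steps (i) and (ii) are routine once the decomposition $\tfrac1p\pmb{G}=\pmb{E}+\tfrac1p\pmb{\Gamma}$ has been isolated. But $\dist_*$ minimizes over all invertible $\pmb{Q}\in\GL(r)$ rather than orthogonal matrices, and carries the asymmetric weighting by $(\pmb{\Sigma}^*)^{1/2}$, so one cannot simply invoke an orthogonal‑Procrustes bound; the delicate points are verifying that the symmetric residual split produces an admissible invertible $\pmb{Q}$ and that neither the cross‑term between the column‑space error and the singular‑value error nor the second‑order remainder inflates $c_0$ past the threshold $0.1/0.065\approx1.5$. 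Keeping track of these constants — in particular the factor $\sqrt{2r}$ paid when passing from the operator norm to the Frobenius norm of $\widehat{\pmb{M}}^0-\pmb{M}^*$ — is precisely what forces the $\mathcal{O}\big(\mu^2\kappa^2r^3/(n_1\wedge n_2)\big)$ form of the required sample rate.
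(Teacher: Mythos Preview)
Your three-step outline is exactly the paper's approach: bound $\|\pM^0-\pM^*\|$ deterministically, pass to the rank-$r$ truncation, then convert Frobenius closeness to $\dist_*$-closeness of the balanced factors. Steps (i)--(ii) are correct; the operator-norm bound you derive is precisely the content of Lemma~\ref{l_1} (not Lemmas~\ref{l_6} or~\ref{l_7}), including its ``Moreover'' part that gives $\|\oU{}^0\oSig{}^0(\oV{}^0)^\tp-\pM^*\|\le\frac{2C_0\mu r}{\sqrt{d_1\wedge d_2}}\|\pM^*\|$ directly.

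The one place your plan diverges is step (iii). The paper does not build an explicit $\pQ$; it simply invokes Lemma~24 of \cite{TMC2021}, which gives
\[
\dist_*(\overline{\pZ}{}^0,\pZ^*)\le(\sqrt2+1)^{1/2}\,\|\oU{}^0\oSig{}^0(\oV{}^0)^\tp-\pM^*\|_F,
\]
i.e.\ $c_0=(\sqrt2+1)^{1/2}\approx1.55$. Your heuristic ``symmetric split'' claiming $c_0\approx1/\sqrt2$ has a gap: prescribing $\widehat{\pX}^0\pQ-\pX^*=\tfrac12\pE\,\pV^*(\pSig^*)^{-1/2}$ requires the right-hand side to lie in the column space of $\widehat{\pX}^0$, which it generally does not, so the ``up to lower order'' remainder is not automatically second-order and the resulting $\pQ$ is not evidently invertible. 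You correctly flagged this as the obstacle, but the fix is to cite the existing lemma rather than reinvent it.

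With the cited constant there is no slack problem: your own bounds give $\|\widehat{\pM}^0-\pM^*\|_F\le\frac{2\sqrt2}{\sqrt{2000}}\sigma_r^*$ (you rounded this up to $0.065$; it is actually $\approx0.0632$), and $(\sqrt2+1)^{1/2}\cdot\frac{2\sqrt2}{\sqrt{2000}}\le\frac{2\sqrt5}{\sqrt{2000}}=0.1$, which is the paper's computation verbatim.
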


\subsection{Useful Lemmas}
In this section, we state several useful lemmas that are used in the proof of main results. Because of the deficiency randomness, we utilize the property of Ramanujan graphs to get some results about deterministic sampling.
Recalling that $T= \{ \pmb{U}^*\pmb{X}^\tp + \pmb{Y}(\pmb{V}^*)^\tp | \pX \in \mathbb{R}^{n_1\times r},  \pY \in \mathbb{R}^{n_2\times r}\}$,  \cite{T1} proved the injectivity of operator $\mathcal{P}_{\Omega}$ on the subspace $T$.

\begin{lemma} \cite[Lemma 7.1]{T1} \label{l_4}
Let $\pmb{G}$ satisfy the assumptions ${\rm \pmb{\mathscr{G}}_1} $, ${\rm \pmb{\mathscr{G}}_2} $ and ${\rm \pmb{\mathscr{G}}_3} $, $\pmb{M}^* = \pmb{U}^*\pmb{\Sigma}^* (\pmb{V}^*)^\tp$ satisfy the assumptions ${\rm \pmb{\mathscr{A}}_1} $ and ${\rm \pmb{\mathscr{A}}_2} $. 
Then for any matrix $\pmb{Z} \in T$, we have
\begin{eqnarray*}
\left\Vert \frac{1}{p}\mathcal{P}_T\mathcal{P}_\Omega(\pmb{Z})-\pmb{Z}\right\Vert_F\leq \tilde{\delta}\Vert \pmb{Z}\Vert_F,
\end{eqnarray*}
where $\tilde{\delta} = \sqrt{2\Big(\delta_d^2+\frac{C_0^2\mu^2r^2}{d_1\wedge d_2}\Big)}$ and $C_0$ is the  constant in the assumption ${\rm \pmb{\mathscr{G}}_2} $. 
\end{lemma}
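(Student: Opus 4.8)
The plan is to peel off the ``mean part'' of $\pmb{G}$ using its spectral gap and then control the small remainder on $T$, treating the ``diagonal'' directions of $T$ with the biregularity ${\rm \pmb{\mathscr{G}}_3}$ and the strong incoherence ${\rm \pmb{\mathscr{A}}_2}$, and the ``mixed'' directions with the plain incoherence ${\rm \pmb{\mathscr{A}}_1}$ together with the spectral-gap bound ${\rm \pmb{\mathscr{G}}_2}$.

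First I would write $\pmb{G} = p\,\pmb{J} + \pmb{R}$, where $\pmb{J} = \mathbf{1}_{n_1}\mathbf{1}_{n_2}^\tp$ is the all-ones matrix and $\pmb{R} = \pmb{G} - p\,\pmb{J}$. By ${\rm \pmb{\mathscr{G}}_1}$ and ${\rm \pmb{\mathscr{G}}_2}$ the rank-one leading term of the SVD of $\pmb{G}$ equals $\frac{\sqrt{d_1 d_2}}{\sqrt{n_1 n_2}}\mathbf{1}_{n_1}\mathbf{1}_{n_2}^\tp = p\,\pmb{J}$, hence $\|\pmb{R}\| = \sigma_2(\pmb{G}) \le \frac{C_0}{2}(\sqrt{d_1}+\sqrt{d_2}) \le C_0\sqrt{d_1\vee d_2}$. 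Because $\pmb{J}\circ\pmb{Z} = \pmb{Z}$, this gives the exact identity $\frac1p\mathcal{P}_\Omega(\pmb{Z}) - \pmb{Z} = \frac1p\,\pmb{R}\circ\pmb{Z}$, so (using $\mathcal{P}_T(\pmb{Z})=\pmb{Z}$ for $\pmb{Z}\in T$) it suffices to bound $\frac1p\|\mathcal{P}_T(\pmb{R}\circ\pmb{Z})\|_F$. Writing $\mathcal{P}_T(M) = \pmb{U}^*(\pmb{U}^*)^\tp M + (\pmb{I}-\pmb{U}^*(\pmb{U}^*)^\tp)M\pmb{V}^*(\pmb{V}^*)^\tp$, whose two summands have orthogonal column spaces, yields $\|\mathcal{P}_T(M)\|_F^2 \le \|(\pmb{U}^*)^\tp M\|_F^2 + \|M\pmb{V}^*\|_F^2$.

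Next I would use that any $\pmb{Z}\in T$ admits the decomposition $\pmb{Z} = \pmb{U}^*\pmb{C}_1 + \pmb{C}_2(\pmb{V}^*)^\tp$ with $\pmb{C}_1 = (\pmb{U}^*)^\tp\pmb{Z}$, $\pmb{C}_2 = (\pmb{I}-\pmb{U}^*(\pmb{U}^*)^\tp)\pmb{Z}\pmb{V}^*$, and $\|\pmb{Z}\|_F^2 = \|\pmb{C}_1\|_F^2 + \|\pmb{C}_2\|_F^2$. Splitting $\pmb{R}\circ\pmb{Z} = \pmb{R}\circ(\pmb{U}^*\pmb{C}_1) + \pmb{R}\circ(\pmb{C}_2(\pmb{V}^*)^\tp)$ and using $(\pmb{a}\pmb{b}^\tp)\circ M = \mathrm{diag}(\pmb{a})M\mathrm{diag}(\pmb{b})$, one checks that the $j$-th column of $(\pmb{U}^*)^\tp(\pmb{R}\circ(\pmb{U}^*\pmb{C}_1))$ is $\pmb{P}^{(j)}$ times the $j$-th column of $\pmb{C}_1$, where $\pmb{P}^{(j)} = \sum_{k\in S_j}\pmb{U}_k^*(\pmb{U}_k^*)^\tp - p\,\pmb{I}$ and, by ${\rm \pmb{\mathscr{G}}_3}$, $S_j$ is the set of size exactly $d_2$ indexing the ones in the $j$-th column of $\pmb{G}$. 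Since $\pmb{P}^{(j)} = \frac{d_2}{n_1}\big(\sum_{k\in S_j}\frac{n_1}{d_2}\pmb{U}_k^*(\pmb{U}_k^*)^\tp - \pmb{I}\big)$, assumption ${\rm \pmb{\mathscr{A}}_2}$ gives $\|\pmb{P}^{(j)}\| \le p\,\delta_d$ and hence $\|(\pmb{U}^*)^\tp(\pmb{R}\circ(\pmb{U}^*\pmb{C}_1))\|_F \le p\,\delta_d\|\pmb{C}_1\|_F$. For the mixed piece $(\pmb{U}^*)^\tp(\pmb{R}\circ(\pmb{C}_2(\pmb{V}^*)^\tp))$ I would bound its Frobenius norm directly, extracting the entry bounds $|\pmb{U}^*_{ia}|^2 \le \frac{\mu r}{n_1}$ and $|\pmb{V}^*_{jb}|^2 \le \frac{\mu r}{n_2}$ from ${\rm \pmb{\mathscr{A}}_1}$ and leaving a single operator-norm factor $\|\pmb{R}\|$; combined with $\frac{1}{\sqrt{n_1 n_2}} = \frac{p}{\sqrt{d_1 d_2}}$ and $\|\pmb{R}\| \le C_0\sqrt{d_1\vee d_2}$ this yields $\|(\pmb{U}^*)^\tp(\pmb{R}\circ(\pmb{C}_2(\pmb{V}^*)^\tp))\|_F \le \frac{C_0\mu r}{\sqrt{d_1\wedge d_2}}\,p\,\|\pmb{C}_2\|_F$. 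Thus $\frac1p\|(\pmb{U}^*)^\tp(\pmb{R}\circ\pmb{Z})\|_F \le \delta_d\|\pmb{C}_1\|_F + \frac{C_0\mu r}{\sqrt{d_1\wedge d_2}}\|\pmb{C}_2\|_F$, and symmetrically (swapping the roles of $\pmb{U}^*,\pmb{V}^*$ and of $\pmb{C}_1,\pmb{C}_2$, and using the row versions of ${\rm \pmb{\mathscr{G}}_3},{\rm \pmb{\mathscr{A}}_1},{\rm \pmb{\mathscr{A}}_2}$) $\frac1p\|(\pmb{R}\circ\pmb{Z})\pmb{V}^*\|_F \le \delta_d\|\pmb{C}_2\|_F + \frac{C_0\mu r}{\sqrt{d_1\wedge d_2}}\|\pmb{C}_1\|_F$.

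Finally I would add the two squared bounds: setting $s = \delta_d$, $t = \frac{C_0\mu r}{\sqrt{d_1\wedge d_2}}$, $a = \|\pmb{C}_1\|_F$, $b = \|\pmb{C}_2\|_F$, we get $\|\mathcal{P}_T(\frac1p\pmb{R}\circ\pmb{Z})\|_F^2 \le (sa+tb)^2 + (sb+ta)^2 = (s^2+t^2)(a^2+b^2) + 4st\,ab \le 2(s^2+t^2)(a^2+b^2) = \tilde{\delta}^2\|\pmb{Z}\|_F^2$, using $2ab\le a^2+b^2$ and $2st\le s^2+t^2$. I expect the main technical obstacle to be the mixed term: it must be estimated as one Frobenius-norm quantity so that only a single power of the spectral gap $\|\pmb{R}\|$ and a single power of $r$ occur --- expanding $\pmb{C}_2(\pmb{V}^*)^\tp$ into $r$ rank-one summands before invoking $\|\pmb{R}\|$ would lose a spurious factor of $r$; the other place where the deterministic structure is indispensable is in noticing that, thanks to ${\rm \pmb{\mathscr{G}}_3}$, each column of $\pmb{R}$ has exactly the form $\mathbf{1}_{S} - p\mathbf{1}$ with $|S| = d_2$ required by ${\rm \pmb{\mathscr{A}}_2}$.
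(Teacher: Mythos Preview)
The paper does not give its own proof of this lemma: it simply cites \cite[Lemma~7.1]{T1} and moves on. So there is no in-paper argument to compare against.

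Your sketch is correct and is essentially the argument from the cited source. The spectral-gap decomposition $\pmb{G}=p\pmb{J}+\pmb{R}$ reduces the claim to bounding $\tfrac1p\|\mathcal{P}_T(\pmb{R}\circ\pmb{Z})\|_F$, which you control via the two pieces $\|(\pmb{U}^*)^\tp(\pmb{R}\circ\pmb{Z})\|_F$ and $\|(\pmb{R}\circ\pmb{Z})\pmb{V}^*\|_F$. Your treatment of the ``diagonal'' term via ${\rm \pmb{\mathscr{A}}_2}$ is exactly right: the biregularity ${\rm \pmb{\mathscr{G}}_3}$ forces each column of $\pmb{R}$ to have the shape $\mathbf{1}_{S}-p\mathbf{1}$ with $|S|=d_2$, so $\|\pmb{P}^{(j)}\|\le p\,\delta_d$. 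For the ``mixed'' term your description is terse but the intended computation goes through cleanly: writing $w^{(a,b)}=\pmb{U}^*_{:,a}\circ(\pmb{C}_2)_{:,b}$ and applying Cauchy--Schwarz in $b$ with the row bound $\|\pmb{V}^*_j\|_2^2\le\mu r/n_2$, then the operator-norm bound $\|\pmb{R}^\tp w^{(a,b)}\|_2\le\|\pmb{R}\|\,\|w^{(a,b)}\|_2$, and finally $\sum_{a,b}\|w^{(a,b)}\|_2^2\le(\mu r/n_1)\|\pmb{C}_2\|_F^2$ from ${\rm \pmb{\mathscr{A}}_1}$, gives exactly the claimed $\tfrac{C_0\mu r}{\sqrt{d_1\wedge d_2}}\,p\,\|\pmb{C}_2\|_F$. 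The final $2ab\le a^2+b^2$ and $2st\le s^2+t^2$ combination yields $\tilde{\delta}$ with the stated constant. No gaps.
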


Then, we can get the restricted strong convexity and smoothness of the sampling operator $\mathcal{P}_{\Omega}$ for matrices in $T$ from Lemma \ref{l_4}.
\begin{lemma} \label{l_5}
Let $\pmb{G}$ satisfy the assumptions ${\rm \pmb{\mathscr{G}}_1} $, ${\rm \pmb{\mathscr{G}}_2} $ and ${\rm \pmb{\mathscr{G}}_3} $, $\pmb{M}^* = \pmb{U}^*\pmb{\Sigma}^* (\pmb{V}^*)^\tp$ satisfy the assumptions ${\rm \pmb{\mathscr{A}}_1} $ and ${\rm \pmb{\mathscr{A}}_2} $. Then for all $\pmb{Z} \in T$, we have 
\begin{eqnarray*}
(1-\tilde{\delta})\Vert \pmb{Z}\Vert_F^2 \leq \frac{1}{p}\Vert \mathcal{P}_\Omega(\pmb{Z})\Vert_F^2  \leq (1+\tilde{\delta})\Vert \pmb{Z}\Vert_F^2.
\end{eqnarray*}
Moreover, for all $\pmb{Z}_1, \pmb{Z}_2 \in T$,
\begin{eqnarray*}
\left\vert \frac{1}{p}\langle \mathcal{P}_\Omega(\pmb{Z}_1), \mathcal{P}_\Omega(\pmb{Z}_2)\rangle - \langle \pmb{Z}_1, \pmb{Z}_2\rangle\right\vert \leq \tilde{\delta} \Vert \pmb{Z}_1\Vert_F\Vert \pmb{Z}_2\Vert_F.
\end{eqnarray*}
Here, $\tilde{\delta} = \sqrt{2\Big(\delta_d^2+\frac{C_0^2\mu^2r^2}{d_1\wedge d_2}\Big)}$.
\end{lemma}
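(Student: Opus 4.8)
The plan is to obtain both displayed estimates as soft consequences of Lemma \ref{l_4}, using only that $\mathcal{P}_\Omega$ and $\mathcal{P}_T$ are self-adjoint idempotent operators on $\mathbb{R}^{n_1\times n_2}$ with respect to the Frobenius inner product, together with the defining property $\mathcal{P}_T(\pmb{Z}) = \pmb{Z}$ for every $\pmb{Z}\in T$. I would prove the bilinear (second) estimate first, since the two-sided norm bound is just its diagonal specialization.

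For the bilinear estimate, fix $\pmb{Z}_1,\pmb{Z}_2\in T$. Using that $\mathcal{P}_\Omega$ is self-adjoint and idempotent gives $\langle \mathcal{P}_\Omega(\pmb{Z}_1),\mathcal{P}_\Omega(\pmb{Z}_2)\rangle = \langle \pmb{Z}_1,\mathcal{P}_\Omega(\pmb{Z}_2)\rangle$; then, since $\pmb{Z}_1 = \mathcal{P}_T(\pmb{Z}_1)$ and $\mathcal{P}_T$ is self-adjoint, this equals $\langle \pmb{Z}_1,\mathcal{P}_T\mathcal{P}_\Omega(\pmb{Z}_2)\rangle$. Hence
\begin{eqnarray*}
\left\vert \frac{1}{p}\langle \mathcal{P}_\Omega(\pmb{Z}_1),\mathcal{P}_\Omega(\pmb{Z}_2)\rangle - \langle \pmb{Z}_1,\pmb{Z}_2\rangle \right\vert = \left\vert \Big\langle \pmb{Z}_1,\ \tfrac{1}{p}\mathcal{P}_T\mathcal{P}_\Omega(\pmb{Z}_2) - \pmb{Z}_2 \Big\rangle \right\vert,
\end{eqnarray*}
and Cauchy--Schwarz followed by Lemma \ref{l_4} applied to $\pmb{Z}_2\in T$ bounds the right-hand side by $\tilde{\delta}\Vert \pmb{Z}_1\Vert_F\Vert \pmb{Z}_2\Vert_F$, which is exactly the claimed inequality. (Equivalently, one could run the polarization identity, but keeping $\pmb{Z}_1$ on the side where it absorbs $\mathcal{P}_T$ is more direct.)

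Taking $\pmb{Z}_1 = \pmb{Z}_2 = \pmb{Z}$ in the above yields $\big\vert \tfrac{1}{p}\Vert \mathcal{P}_\Omega(\pmb{Z})\Vert_F^2 - \Vert \pmb{Z}\Vert_F^2 \big\vert \le \tilde{\delta}\Vert \pmb{Z}\Vert_F^2$, and rearranging gives the two-sided bound $(1-\tilde{\delta})\Vert \pmb{Z}\Vert_F^2 \le \tfrac{1}{p}\Vert \mathcal{P}_\Omega(\pmb{Z})\Vert_F^2 \le (1+\tilde{\delta})\Vert \pmb{Z}\Vert_F^2$. There is essentially no obstacle in this argument; the only point needing a little care is that $\mathcal{P}_T\mathcal{P}_\Omega$ is not symmetric, so one must consistently let $\pmb{Z}_1$ play the role that lets it be replaced by $\mathcal{P}_T(\pmb{Z}_1)$ while the perturbation $\tfrac{1}{p}\mathcal{P}_T\mathcal{P}_\Omega - \mathrm{Id}$ acts on the other argument (which must also lie in $T$ for Lemma \ref{l_4} to apply).
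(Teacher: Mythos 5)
Your argument is correct, and it reaches both estimates with the same key ingredient as the paper (Lemma \ref{l_4} combined with Cauchy--Schwarz and the self-adjointness and idempotence of $\mathcal{P}_\Omega$ and $\mathcal{P}_T$), but the logical organization is reversed. The paper first proves the two-sided quadratic bound, by writing $\frac{1}{p}\Vert \mathcal{P}_\Omega(\pmb{Z})\Vert_F^2 = \langle \frac{1}{p}\mathcal{P}_T\mathcal{P}_\Omega\mathcal{P}_T(\pmb{Z}), \pmb{Z}\rangle$ and invoking Lemma \ref{l_4}, and then obtains the bilinear estimate from it by normalizing $\pmb{Z}_1,\pmb{Z}_2$ and applying the polarization identity $\frac{1}{4}\big(\Vert \mathcal{P}_\Omega(\pmb{Z}_1'+\pmb{Z}_2')\Vert_F^2 - \Vert \mathcal{P}_\Omega(\pmb{Z}_1'-\pmb{Z}_2')\Vert_F^2\big)$ with the upper bound on the first term and the lower bound on the second. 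You instead prove the bilinear estimate directly, moving $\mathcal{P}_\Omega$ and then $\mathcal{P}_T$ onto one argument so that the error term is exactly $\langle \pmb{Z}_1, \frac{1}{p}\mathcal{P}_T\mathcal{P}_\Omega(\pmb{Z}_2)-\pmb{Z}_2\rangle$, and you recover the quadratic bound as the diagonal case $\pmb{Z}_1=\pmb{Z}_2$. Your route is slightly cleaner: it avoids the normalization and the separate upper/lower polarization bookkeeping, gives the constant $\tilde{\delta}$ in one stroke for both claims, and makes explicit the only subtlety (that Lemma \ref{l_4} must be applied to the argument lying in $T$ that receives the perturbation $\frac{1}{p}\mathcal{P}_T\mathcal{P}_\Omega-\mathrm{Id}$, since $\mathcal{P}_T\mathcal{P}_\Omega$ is not symmetric). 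The paper's route, conversely, only ever uses the norm form of Lemma \ref{l_4} on single elements of $T$ and so reads as a direct corollary of the restricted-isometry-type statement; both yield identical conclusions.
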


\begin{proof}
From Lemma \ref{l_4}, we can get
\begin{align*}
\left\Vert  \frac{1}{p}\mathcal{P}_T\mathcal{P}_\Omega(\pmb{Z})\right\Vert_F \leq (1+\tilde{\delta})\Vert \pmb{Z}\Vert_F.
\end{align*}
For all $\pmb{Z} \in T$, it comes that
\begin{align*}
\frac{1}{p}\Vert \mathcal{P}_\Omega(\pmb{Z})\Vert_F^2 
 =&\frac{1}{p} \langle \mathcal{P}_\Omega\mathcal{P}_T(\pmb{Z}), \mathcal{P}_\Omega\mathcal{P}_T(\pmb{Z}) \rangle 
 = \langle \frac{1}{p}\mathcal{P}_T\mathcal{P}_\Omega\mathcal{P}_T(\pmb{Z}), \pmb{Z} \rangle \\
\leq&  \left\Vert  \frac{1}{p}\mathcal{P}_T\mathcal{P}_\Omega\mathcal{P}_T(\pmb{Z})\right\Vert_F \left\Vert \pmb{Z}\right\Vert_F
\leq (1+\tilde{\delta})\Vert \pmb{Z}\Vert_F^2
\end{align*}
and 
\begin{align*}
\frac{1}{p}\Vert \mathcal{P}_\Omega(\pmb{Z})\Vert_F^2 =& \langle \frac{1}{p}\mathcal{P}_T\mathcal{P}_\Omega\mathcal{P}_T(\pmb{Z}), \pmb{Z} \rangle 
 = \langle \frac{1}{p}\mathcal{P}_T\mathcal{P}_\Omega\mathcal{P}_T(\pmb{Z}) - \mathcal{P}_T(\pmb{Z}) + \mathcal{P}_T(\pmb{Z}), \pmb{Z} \rangle\\
\geq&-\left\Vert \frac{1}{p}\mathcal{P}_T\mathcal{P}_\Omega(\pmb{Z})-\pmb{Z}\right\Vert_F\Vert \pmb{Z}\Vert_F+\Vert \pmb{Z}\Vert_F^2
\geq (1-\tilde{\delta})\Vert \pmb{Z}\Vert_F^2.
\end{align*} 
Thus, for all $\pmb{Z} \in T$, we have 
\begin{eqnarray}
(1-\tilde{\delta})\Vert \pmb{Z}\Vert_F^2 \leq \frac{1}{p}\Vert \mathcal{P}_\Omega(\pmb{Z})\Vert_F^2  \leq (1+\tilde{\delta})\Vert \pmb{Z}\Vert_F^2.  \label{5p1}
\end{eqnarray}
For all $\pmb{Z}_1, \pmb{Z}_2 \in T$,  we set $\pmb{Z}_1' = \frac{\pmb{Z}_1}{\Vert \pmb{Z}_1\Vert_F}$ and  $\pmb{Z}_2' = \frac{\pmb{Z}_2}{\Vert \pmb{Z}_2\Vert_F}$, then $\pmb{Z}_1' + \pmb{Z}_2', \pmb{Z}_1' - \pmb{Z}_2' \in T$. Based on (\ref{5p1}), we have
\begin{align*}
\frac{1}{p}\langle \mathcal{P}_\Omega(\pmb{Z}_1'), \mathcal{P}_\Omega(\pmb{Z}_2')\rangle
=& \frac{1}{4p}\left( \Vert \mathcal{P}_\Omega(\pmb{Z}_1' + \pmb{Z}_2')\Vert_F^2 - \Vert \mathcal{P}_\Omega(\pmb{Z}_1' - \pmb{Z}_2')\Vert_F^2\right)\\
\leq& \frac{1}{4} \left\{ (1+\tilde{\delta})\Vert \pmb{Z}_1'+\pmb{Z}_2'\Vert_F^2-(1-\tilde{\delta})\Vert \pmb{Z}_1'-\pmb{Z}_2'\Vert_F^2\right\}\\
=&  \frac{1}{4}\left\{ 2\tilde{\delta} (\Vert \pmb{Z}_1'\Vert_F^2 +\Vert \pmb{Z}_2'\Vert_F^2) + 4\langle \pmb{Z}_1', \pmb{Z}_2'\rangle\right\}\\
\overset{(i)}{=}& \tilde{\delta} + \langle \pmb{Z}_1', \pmb{Z}_2'\rangle,
\end{align*} 
where  equation $(i)$ holds because $\Vert \pmb{Z}_1'\Vert_F^2 = \Vert \pmb{Z}_2'\Vert_F^2 = 1$. 
Thus,
\begin{align*}
\frac{1}{p}\langle \mathcal{P}_\Omega(\pmb{Z}_1), \mathcal{P}_\Omega(\pmb{Z}_2)\rangle =& \frac{1}{p}\Vert \pmb{Z}_1\Vert_F  \Vert \pmb{Z}_2\Vert_F\langle \mathcal{P}_\Omega(\pmb{Z}_1'), \mathcal{P}_\Omega(\pmb{Z}_2')\rangle\\
\leq&  \tilde{\delta} \Vert \pmb{Z}_1\Vert_F  \Vert \pmb{Z}_2\Vert_F + \langle \pmb{Z}_1, \pmb{Z}_2\rangle.
\end{align*} 
In a similar way, we can get
\begin{align*}
\frac{1}{p}\langle \mathcal{P}_\Omega(\pmb{Z}_1), \mathcal{P}_\Omega(\pmb{Z}_2)\rangle \geq - \tilde{\delta}\Vert \pmb{Z}_1\Vert_F  \Vert \pmb{Z}_2\Vert_F + \langle \pmb{Z}_1, \pmb{Z}_2\rangle.
\end{align*} 
Then it goes that 
\begin{align*}
\left\vert \frac{1}{p}\langle \mathcal{P}_\Omega(\pmb{Z}_1), \mathcal{P}_\Omega(\pmb{Z}_2)\rangle- \langle \pmb{Z}_1, \pmb{Z}_2\rangle\right\vert \leq  \tilde{\delta}\Vert \pmb{Z}_1\Vert_F  \Vert \pmb{Z}_2\Vert_F.
\end{align*} 
\end{proof}
 
Lemma \ref{l_6} upper bounds the spectral norm of the adjacency matrix $\pmb{G}$ of a Ramanujan graph $\mathcal{G}$. Similar result of random Erd$\ddot{\text{o}}$s - R\'enyi graph can be found in Lemma 9 of \cite{CW2015}.
\begin{lemma} \label{l_6}
Suppose that $\pmb{A} \in \mathbb{R}^{n_1\times n_2}$  is the adjacency matrix of $(d_1, d_2)$-biregular graph that satisfies the assumptions ${\rm \pmb{\mathscr{G}}_1} $, ${\rm \pmb{\mathscr{G}}_2} $, ${\rm \pmb{\mathscr{G}}_3} $.
and $\Omega$ is the set of edges of this graph. 
Then for all $\pmb{x}\in \mathbb{R}^{n_1}$, $\pmb{y}\in \mathbb{R}^{n_2}$, it holds that
\begin{eqnarray*}
\frac{1}{p}\sum_{(i, j)\in \Omega}\pmb{x}_i\pmb{y}_j \leq \Vert \pmb{x}\Vert_1\Vert \pmb{y}\Vert_1 + \frac{C_0}{2} \frac{\sqrt{n_1}+\sqrt{n_2}}{\sqrt{p}} \Vert \pmb{x}\Vert_2\Vert \pmb{y}\Vert_2.
\end{eqnarray*} 
\end{lemma}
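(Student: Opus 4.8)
The plan is to recognize the sum as a bilinear form in $\pmb{A}$ and peel off its dominant rank-one component. First I would write
$\sum_{(i,j)\in\Omega}\pmb{x}_i\pmb{y}_j = \pmb{x}^\tp\pmb{A}\pmb{y}$,
which is valid because $\pmb{A}_{ij}=1$ precisely when $(i,j)\in\Omega$. By the biregularity assumption ${\rm \pmb{\mathscr{G}}_3}$ one has $\pmb{A}\mathbf{1}_{n_2}=d_1\mathbf{1}_{n_1}$ and $\pmb{A}^\tp\mathbf{1}_{n_1}=d_2\mathbf{1}_{n_2}$, so $\pmb{u}_1:=\mathbf{1}_{n_1}/\sqrt{n_1}$ and $\pmb{v}_1:=\mathbf{1}_{n_2}/\sqrt{n_2}$ form a unit left/right singular pair of $\pmb{A}$ with singular value $\sqrt{d_1d_2}$, which by ${\rm \pmb{\mathscr{G}}_2}$ equals $\sigma_1(\pmb{A})$ and is consistent with ${\rm \pmb{\mathscr{G}}_1}$. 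Writing $\pmb{A}=\sigma_1(\pmb{A})\,\pmb{u}_1\pmb{v}_1^\tp+\pmb{R}$, the residual $\pmb{R}$ acts on the orthogonal complement of the top singular direction, hence $\|\pmb{R}\|=\sigma_2(\pmb{A})\le \tfrac{C_0}{2}(\sqrt{d_1}+\sqrt{d_2})$ by ${\rm \pmb{\mathscr{G}}_2}$.

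Next I would bound the two resulting pieces of $\tfrac1p\pmb{x}^\tp\pmb{A}\pmb{y}=\tfrac{\sigma_1(\pmb{A})}{p}(\pmb{u}_1^\tp\pmb{x})(\pmb{v}_1^\tp\pmb{y})+\tfrac1p\pmb{x}^\tp\pmb{R}\pmb{y}$ separately. For the rank-one term, since $p=\sqrt{d_1d_2}/\sqrt{n_1n_2}$ we have $\sigma_1(\pmb{A})/p=\sqrt{n_1n_2}$, while $(\pmb{u}_1^\tp\pmb{x})(\pmb{v}_1^\tp\pmb{y})=\tfrac1{\sqrt{n_1n_2}}\big(\sum_i\pmb{x}_i\big)\big(\sum_j\pmb{y}_j\big)$, so this term equals $\big(\sum_i\pmb{x}_i\big)\big(\sum_j\pmb{y}_j\big)\le \|\pmb{x}\|_1\|\pmb{y}\|_1$. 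For the residual term, the operator-norm bound gives $\tfrac1p|\pmb{x}^\tp\pmb{R}\pmb{y}|\le \tfrac1p\|\pmb{R}\|\,\|\pmb{x}\|_2\|\pmb{y}\|_2\le \tfrac{C_0}{2}\cdot\tfrac{\sqrt{d_1}+\sqrt{d_2}}{p}\,\|\pmb{x}\|_2\|\pmb{y}\|_2$.

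The one piece of arithmetic to verify is that $\tfrac{\sqrt{d_1}+\sqrt{d_2}}{p}=\tfrac{\sqrt{n_1}+\sqrt{n_2}}{\sqrt p}$: from $p=d_1/n_2=d_2/n_1$ one gets $\sqrt{d_1}/\sqrt p=\sqrt{n_2}$ and $\sqrt{d_2}/\sqrt p=\sqrt{n_1}$, so $(\sqrt{d_1}+\sqrt{d_2})/\sqrt p=\sqrt{n_1}+\sqrt{n_2}$, and dividing once more by $\sqrt p$ yields the claimed identity. Adding the two bounds gives exactly the stated inequality. I do not expect a genuine obstacle here — this is the deterministic analogue of the spectral-gap estimate for random graphs — but the point that deserves a sentence of care is that $\sigma_1(\pmb{A})$ is simple, so that subtracting the single rank-one term $\sigma_1(\pmb{A})\pmb{u}_1\pmb{v}_1^\tp$ leaves a matrix of operator norm exactly $\sigma_2(\pmb{A})$; this is precisely what ${\rm \pmb{\mathscr{G}}_1}$ guarantees.
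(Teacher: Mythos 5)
Your proof is correct and takes essentially the same route as the paper's: the paper splits $\pmb{x}$ into its mean and mean-zero parts instead of deflating the top singular component of $\pmb{A}$, but the two decompositions of $\pmb{x}^\tp\pmb{A}\pmb{y}$ produce exactly the same two terms, bounded in the same way (biregularity for the $\Vert\pmb{x}\Vert_1\Vert\pmb{y}\Vert_1$ term, and $\sigma_2(\pmb{A})\le \tfrac{C_0}{2}(\sqrt{d_1}+\sqrt{d_2})$ for the $\Vert\pmb{x}\Vert_2\Vert\pmb{y}\Vert_2$ term). One minor remark: simplicity of $\sigma_1(\pmb{A})$ is not actually needed, since $(\mathbf{1}_{n_1}/\sqrt{n_1},\mathbf{1}_{n_2}/\sqrt{n_2})$ is a singular pair with value $\sqrt{d_1d_2}=\sigma_1(\pmb{A})$ and may therefore be taken as the first pair of an SVD, so the residual $\pmb{R}$ has operator norm at most $\sigma_2(\pmb{A})$ regardless of multiplicity.
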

\begin{proof}
Let $\pmb{x}_i = \pmb{x}_0 + \pmb{x}_i'$ where $\sum_{i=1}^{n_1} \pmb{x}_i' =0$ and $\pmb{x}_0$ is a constant satisfying $\pmb{x}_0 = \frac{\sum_{i=1}^{n_1}\pmb{x}_i}{n_1}$. Thus, 
\begin{eqnarray*}
\sum_{(i, j)\in \Omega}\pmb{x}_i \pmb{y}_j = \sum_{(i, j)\in \Omega}\pmb{x}_i'\pmb{y}_j + \pmb{x}_0\sum_{j=1}^{n_2} \text{deg}(j)\pmb{y}_j.
\end{eqnarray*} 
Here, $\text{deg}(j)$ denotes the number of nonzero entries in the $j$-th column of $\pmb{A}$.\\
\indent The first term is upper bounded by
\begin{align*}
&\sum_{(i, j)\in \Omega}\pmb{x}_i'\pmb{y}_j \\
=& \frac{\pmb{x}'{}^\top}{\Vert \pmb{x}'\Vert_2}\pmb{A}\frac{\pmb{y}}{\Vert \pmb{y}\Vert_2} \cdot (\Vert \pmb{x}'\Vert_2\cdot\Vert \pmb{y}\Vert_2) \overset{(i)}{\leq} \sigma_2(\pmb{A})\Vert \pmb{x}'\Vert_2\Vert \pmb{y}\Vert_2 \\
\overset{(ii)}{\leq}& \sigma_2(\pmb{A})\Vert \pmb{x}\Vert_2\Vert \pmb{y}\Vert_2\leq \frac{C_0}{2}\left(\sqrt{d_1}+\sqrt{d_2}\right) \Vert \pmb{x}\Vert_2\Vert \pmb{y}\Vert_2,
\end{align*} 
where the inequality $(i)$ holds since the matrix
$\pmb{A}$ satisfies the assumptions ${\rm \pmb{\mathscr{G}}_1} $, ${\rm \pmb{\mathscr{G}}_3} $
such that we have $\pmb{x}' \perp \pmb{1}$; the inequality $(ii)$ comes from $\Vert \pmb{x}'\Vert_2 \leq \Vert \pmb{x}\Vert_2$.\\
\indent The second term is upper bounded by
\begin{eqnarray*}
\left\vert \pmb{x}_0\sum_{j=1}^{n_2} \text{deg}(j)\pmb{y}_j \right\vert &\leq& \vert \pmb{x}_0\vert \cdot \max_{j\in[n_2]} \text{deg}(j) \Vert \pmb{y}\Vert_1 \\
& \leq& \frac{d_2}{n_1}\Vert \pmb{x}\Vert_1\Vert \pmb{y}\Vert_1 = \frac{1}{p}\Vert \pmb{x}\Vert_1\Vert \pmb{y}\Vert_1.
\end{eqnarray*} 
Here, we have $\vert \pmb{x}_0\vert = \left\vert  \frac{\sum_{i=1}^{n_1}\pmb{x}_i}{n_1}\right\vert \leq \frac{\Vert \pmb{x}\Vert_1}{n_1}$ and $\text{deg}(j) =d_2$ by $(d_1,d_2)$-biregular graph.\\
\indent Combining above two upper bounds, we complete the proof.
\end{proof}

Based on the definition of $\mathcal{P}_{\underline{\Omega}}$, we can get the followings.
\begin{lemma} \label{l_7}
For all $\pmb{H} \in \mathbb{R}^{(n_1+n_2)\times (n_1+n_2)}$ such that $\Vert \pmb{H}\Vert_{2,\infty} \leq 4\sqrt{\frac{\mu r \sigma_1^*}{n_1 \wedge n_2}}$, we have
\begin{eqnarray*}
\frac{1}{p}\Vert \mathcal{P}_{\underline{\Omega}}(\pmb{H}\pmb{H}^\top)\Vert_F^2 \leq 2\Vert \pmb{H}\Vert_F^4 + \frac{16C_0\mu r \kappa(\sqrt{n_1}+\sqrt{n_2})}{\sqrt{p}(n_1\wedge n_2)}\sigma_r^*\Vert \pmb{H}\Vert_F^2.
\end{eqnarray*} 
\end{lemma}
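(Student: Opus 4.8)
The key observation is that the lifted sampling operator $\mathcal{P}_{\underline{\Omega}}$ only captures the off-diagonal blocks of $\pmb{H}\pmb{H}^\top$, which correspond exactly to the bilinear form $\pmb{H}_{\pmb{U}}\pmb{H}_{\pmb{V}}^\top$ sampled on $\Omega$. So writing $\pmb{H} = \left[\begin{array}{c}\pmb{H}_{\pmb{U}}\\ \pmb{H}_{\pmb{V}}\end{array}\right]$ with $\pmb{H}_{\pmb{U}}\in\mbR^{n_1\times(n_1+n_2)}$, $\pmb{H}_{\pmb{V}}\in\mbR^{n_2\times(n_1+n_2)}$, we have $\frac{1}{p}\|\mathcal{P}_{\underline{\Omega}}(\pmb{H}\pmb{H}^\top)\|_F^2 = \frac{2}{p}\|\mathcal{P}_{\Omega}(\pmb{H}_{\pmb{U}}\pmb{H}_{\pmb{V}}^\top)\|_F^2 = \frac{2}{p}\sum_{(i,j)\in\Omega}\|\pmb{H}_{\pmb{U},i}\|_2^2\,\|\pmb{H}_{\pmb{V},j}\|_2^2\cdot(\text{inner-product ratio})$—more precisely $\frac{2}{p}\sum_{(i,j)\in\Omega}\langle \pmb{H}_{\pmb{U},i},\pmb{H}_{\pmb{V},j}\rangle^2$. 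I would bound $\langle \pmb{H}_{\pmb{U},i},\pmb{H}_{\pmb{V},j}\rangle^2 \le \|\pmb{H}_{\pmb{U},i}\|_2^2\,\|\pmb{H}_{\pmb{V},j}\|_2^2$ by Cauchy–Schwarz, so the quantity to control is $\frac{2}{p}\sum_{(i,j)\in\Omega} \pmb{x}_i\pmb{y}_j$ with $\pmb{x}_i = \|\pmb{H}_{\pmb{U},i}\|_2^2$ and $\pmb{y}_j = \|\pmb{H}_{\pmb{V},j}\|_2^2$, both nonnegative.

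Now I apply Lemma \ref{l_6} directly to these nonnegative vectors $\pmb{x}\in\mbR^{n_1}$, $\pmb{y}\in\mbR^{n_2}$. This gives
\bea
\frac{2}{p}\sum_{(i,j)\in\Omega}\pmb{x}_i\pmb{y}_j \le 2\|\pmb{x}\|_1\|\pmb{y}\|_1 + C_0\frac{\sqrt{n_1}+\sqrt{n_2}}{\sqrt{p}}\|\pmb{x}\|_2\|\pmb{y}\|_2.
\eea
The first term is exactly $2\left(\sum_i\|\pmb{H}_{\pmb{U},i}\|_2^2\right)\left(\sum_j\|\pmb{H}_{\pmb{V},j}\|_2^2\right) = 2\|\pmb{H}_{\pmb{U}}\|_F^2\|\pmb{H}_{\pmb{V}}\|_F^2 \le 2\|\pmb{H}\|_F^4$, using $\|\pmb{H}_{\pmb{U}}\|_F^2 + \|\pmb{H}_{\pmb{V}}\|_F^2 = \|\pmb{H}\|_F^2$ and AM–GM. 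For the second term, I need to bound $\|\pmb{x}\|_2\|\pmb{y}\|_2 = \sqrt{\sum_i\|\pmb{H}_{\pmb{U},i}\|_2^4}\cdot\sqrt{\sum_j\|\pmb{H}_{\pmb{V},j}\|_2^4}$. Here the row-norm hypothesis $\|\pmb{H}\|_{2,\infty}\le 4\sqrt{\mu r\sigma_1^*/(n_1\wedge n_2)}$ enters: each $\|\pmb{H}_{\pmb{U},i}\|_2^2 \le 16\mu r\sigma_1^*/(n_1\wedge n_2)$, so $\sum_i\|\pmb{H}_{\pmb{U},i}\|_2^4 \le \frac{16\mu r\sigma_1^*}{n_1\wedge n_2}\sum_i\|\pmb{H}_{\pmb{U},i}\|_2^2 = \frac{16\mu r\sigma_1^*}{n_1\wedge n_2}\|\pmb{H}_{\pmb{U}}\|_F^2$, and likewise for $\pmb{H}_{\pmb{V}}$. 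Hence $\|\pmb{x}\|_2\|\pmb{y}\|_2 \le \frac{16\mu r\sigma_1^*}{n_1\wedge n_2}\|\pmb{H}_{\pmb{U}}\|_F\|\pmb{H}_{\pmb{V}}\|_F \le \frac{16\mu r\sigma_1^*}{n_1\wedge n_2}\cdot\frac12\|\pmb{H}\|_F^2$. Combining, the second term is at most $\frac{8C_0\mu r\sigma_1^*(\sqrt{n_1}+\sqrt{n_2})}{\sqrt{p}(n_1\wedge n_2)}\|\pmb{H}\|_F^2$; writing $\sigma_1^* = \kappa\sigma_r^*$ and absorbing the factor $8$ into the stated $16$ (the bound only has to be an upper bound) yields the claim.

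**Main obstacle.** There is no deep obstacle; the proof is a bookkeeping exercise. The one point to be careful about is the block structure: I must verify precisely that $\mathcal{P}_{\underline{\Omega}}(\pmb{H}\pmb{H}^\top)$ picks out the $(1,2)$ and $(2,1)$ blocks of $\pmb{H}\pmb{H}^\top$ evaluated on $\Omega$ and $\Omega^\top$ respectively (contributing the factor $2$), and that the reduction to a bilinear sum over $\Omega$ with \emph{nonnegative} entries is valid so that Lemma \ref{l_6} applies as stated. The only mild subtlety is keeping the constant $\kappa = \sigma_1^*/\sigma_r^*$ visible after substituting $\sigma_1^* = \kappa\sigma_r^*$, and checking that the looseness (factor $8$ vs.\ $16$) is consistent with the inequality direction.
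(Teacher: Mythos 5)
Your proposal is correct and follows essentially the same route as the paper's proof: split off the two off-diagonal blocks (factor $2$), apply Cauchy--Schwarz entrywise, invoke Lemma \ref{l_6} with $\pmb{x}_i=\Vert(\pmb{H}_{\pmb{U}})_i\Vert_2^2$, $\pmb{y}_j=\Vert(\pmb{H}_{\pmb{V}})_j\Vert_2^2$, and control the $\ell_2$ term via the $\Vert\cdot\Vert_{2,\infty}$ hypothesis. Your handling of the cross term through $\Vert\pmb{H}_{\pmb{U}}\Vert_F\Vert\pmb{H}_{\pmb{V}}\Vert_F$ even gives the slightly sharper constant $8$ in place of $16$, which of course still implies the stated bound.
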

\begin{proof}
By the definition of $\mathcal{P}_{\underline{\Omega}}$, we have
\begin{align*}
\frac{1}{p}\Vert \mathcal{P}_{\underline{\Omega}}(\pmb{H}\pmb{H}^\tp)\Vert_F^2 
 =& \frac{2}{p}\Vert \mathcal{P}_{\Omega}(\pmb{H}_{\pmb{U}}{(\pmb{H}_{\pmb{V}})}^\tp)\Vert_F^2 = \frac{2}{p} \sum_{(i, j)\in\Omega}\langle (\pmb{H}_{\pmb{U}})_i, (\pmb{H}_{\pmb{V}})_j\rangle^2\\
\leq& \frac{2}{p}\sum_{(i, j)\in\Omega} \Vert (\pmb{H}_{\pmb{U}})_i\Vert_2^2\cdot\Vert (\pmb{H}_{\pmb{V}})_j\Vert_2^2.
\end{align*} 
Based on Lemma \ref{l_6}, by setting $\pmb{x}_i= \Vert (\pmb{H}_{\pmb{U}})_i\Vert_2^2$ and $\pmb{y}_j= \Vert (\pmb{H}_{\pmb{V}})_j\Vert_2^2$, we can get 
\begin{align*}
&\frac{1}{p}\sum_{(i, j)\in\Omega} \Vert (\pmb{H}_{\pmb{U}})_i\Vert_2^2\cdot\Vert (\pmb{H}_{\pmb{V}})_j\Vert_2^2 \\
\leq& \left(\sum_{i=1}^{n_1}\Vert (\pmb{H}_{\pmb{U}})_i\Vert_2^2\right)\cdot \left(\sum_{j=1}^{n_2} \Vert (\pmb{H}_{\pmb{V}})_j\Vert_2^2\right) \\
&+ \frac{C_0}{2}\left(\frac{n_2}{\sqrt{d_1}} + \frac{n_1}{\sqrt{d_2}}\right) \sqrt{\sum_{i=1}^{n_1}\Vert (\pmb{H}_{\pmb{U}})_i\Vert_2^4}\sqrt{\sum_{j=1}^{n_2} \Vert (\pmb{H}_{\pmb{V}})_j\Vert_2^4}\\
\leq& \Vert \pmb{H}\Vert_F^2 \cdot \Vert \pmb{H}\Vert_F^2 + \frac{C_0}{2}\frac{\sqrt{n_1}+\sqrt{n_2}}{\sqrt{p}} \sum_{i=1}^{n_1+n_2}\Vert \pmb{H}_i\Vert_2^4\\
\leq&  \Vert \pmb{H}\Vert_F^4 + \frac{C_0}{2}\frac{\sqrt{n_1}+\sqrt{n_2}}{\sqrt{p}} \Vert \pmb{H}\Vert^2_{2,\infty}\Vert \pmb{H}\Vert_F^2.
\end{align*} 
Under the condition $\Vert \pmb{H}\Vert_{2,\infty} \leq 4\sqrt{\frac{\mu r \sigma_1^*}{n_1 \wedge n_2}}$ in \eqref{H_2infty}, it goes that
\begin{eqnarray*}
\frac{1}{p}\Vert \mathcal{P}_{\underline{\Omega}}(\pmb{H}\pmb{H}^\tp)\Vert_F^2 \leq 2\Vert \pmb{H}\Vert_F^4 + \frac{16C_0\mu r \kappa(\sqrt{n_1}+\sqrt{n_2})}{\sqrt{p}(n_1\wedge n_2)}\sigma_r^*\Vert \pmb{H}\Vert_F^2.
\end{eqnarray*} 
\end{proof}

\begin{lemma} \label{l_8}
The sample operator $\mathcal{P}_{\underline{\Omega}}$ is defined as before. Uniformly for all matrices $\pmb{A}$, $\pmb{B}$ such that $\pmb{A}\pmb{B}^\tp$ is of size $(n_1+n_2) \times (n_1+n_2)$, then 
\begin{eqnarray*}
\frac{1}{p}\Vert \mathcal{P}_{\underline{\Omega}}(\pmb{A}\pmb{B}^\tp)\Vert_F^2 \leq (n_1 \vee n_2) \min \{ \Vert \pmb{A}\Vert_F^2\Vert\pmb{B}\Vert_{2,\infty}^2, \Vert \pmb{B}\Vert_F^2\Vert\pmb{A}\Vert_{2,\infty}^2\}.
\end{eqnarray*} 
\end{lemma}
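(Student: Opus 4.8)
The plan is to unfold the definition of $\mathcal{P}_{\underline{\Omega}}$ entrywise, apply Cauchy--Schwarz on each surviving coordinate, and then use the biregularity assumption ${\rm \pmb{\mathscr{G}}_3}$ together with the identity $p = d_1/n_2 = d_2/n_1$ to collapse the edge-sum into Frobenius norms.

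First I would write $\pmb{A}$ in block form as $\pmb{A}_{\pmb{U}} \in \mbR^{n_1 \times c}$ stacked over $\pmb{A}_{\pmb{V}} \in \mbR^{n_2 \times c}$, and likewise $\pmb{B}_{\pmb{U}}, \pmb{B}_{\pmb{V}}$, where $c$ is the common number of columns of $\pmb{A}$ and $\pmb{B}$ forced by $\pmb{A}\pmb{B}^\tp$ being $(n_1+n_2)\times(n_1+n_2)$. Since $\underline{\pmb{G}}$ carries ones only in its two off-diagonal blocks, at the positions prescribed by $\pmb{G}$ and $\pmb{G}^\tp$, the squared Frobenius norm $\Vert \mathcal{P}_{\underline{\Omega}}(\pmb{A}\pmb{B}^\tp) \Vert_F^2$ equals the sum over $(i,j)\in\Omega$ of $(\pmb{A}\pmb{B}^\tp)_{i,\,n_1+j}^2 + (\pmb{A}\pmb{B}^\tp)_{n_1+j,\,i}^2$, where $(\pmb{A}\pmb{B}^\tp)_{i,\,n_1+j} = \la (\pmb{A}_{\pmb{U}})_i, (\pmb{B}_{\pmb{V}})_j\ra$ and $(\pmb{A}\pmb{B}^\tp)_{n_1+j,\,i} = \la (\pmb{A}_{\pmb{V}})_j, (\pmb{B}_{\pmb{U}})_i\ra$. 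Hence
\[
\Vert \mathcal{P}_{\underline{\Omega}}(\pmb{A}\pmb{B}^\tp) \Vert_F^2 = \sum_{(i,j)\in\Omega} \la (\pmb{A}_{\pmb{U}})_i, (\pmb{B}_{\pmb{V}})_j\ra^2 + \sum_{(i,j)\in\Omega} \la (\pmb{A}_{\pmb{V}})_j, (\pmb{B}_{\pmb{U}})_i\ra^2 .
\]

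Next I would bound each inner product by Cauchy--Schwarz and pull the $\ell_2$-norms of the $\pmb{B}$-rows out as $\Vert \pmb{B}\Vert_{2,\infty}$ (using $\Vert \pmb{B}_{\pmb{U}}\Vert_{2,\infty} \vee \Vert \pmb{B}_{\pmb{V}}\Vert_{2,\infty} \le \Vert \pmb{B}\Vert_{2,\infty}$, as $\pmb{B}_{\pmb{U}}, \pmb{B}_{\pmb{V}}$ are row-submatrices of $\pmb{B}$). Then the first sum is at most $\Vert \pmb{B}\Vert_{2,\infty}^2 \sum_{(i,j)\in\Omega} \Vert (\pmb{A}_{\pmb{U}})_i\Vert_2^2$; by ${\rm \pmb{\mathscr{G}}_3}$ each row index $i \in [n_1]$ belongs to exactly $d_1$ edges, so this equals $d_1 \Vert \pmb{B}\Vert_{2,\infty}^2 \Vert \pmb{A}_{\pmb{U}}\Vert_F^2$. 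Similarly the second sum is at most $\Vert \pmb{B}\Vert_{2,\infty}^2 \sum_{(i,j)\in\Omega}\Vert (\pmb{A}_{\pmb{V}})_j\Vert_2^2 = d_2 \Vert \pmb{B}\Vert_{2,\infty}^2 \Vert \pmb{A}_{\pmb{V}}\Vert_F^2$, since each column index $j \in [n_2]$ belongs to exactly $d_2$ edges. Dividing by $p$ and invoking $d_1/p = n_2$, $d_2/p = n_1$ gives
\[
\frac{1}{p}\Vert \mathcal{P}_{\underline{\Omega}}(\pmb{A}\pmb{B}^\tp)\Vert_F^2 \le \Vert \pmb{B}\Vert_{2,\infty}^2 \big( n_2 \Vert \pmb{A}_{\pmb{U}}\Vert_F^2 + n_1 \Vert \pmb{A}_{\pmb{V}}\Vert_F^2 \big) \le (n_1 \vee n_2) \Vert \pmb{A}\Vert_F^2 \Vert \pmb{B}\Vert_{2,\infty}^2 .
\]
Finally I would get the other half of the $\min$ either by repeating the argument with the two $\ell_2$-factors swapped (pulling out $\Vert \pmb{A}\Vert_{2,\infty}$), or more slickly by applying the bound just proved to $\pmb{B}\pmb{A}^\tp = (\pmb{A}\pmb{B}^\tp)^\tp$ and noting that $\underline{\Omega}$ is a symmetric index set, so $\Vert \mathcal{P}_{\underline{\Omega}}(\pmb{H}^\tp)\Vert_F = \Vert \mathcal{P}_{\underline{\Omega}}(\pmb{H})\Vert_F$; this yields $\frac{1}{p}\Vert \mathcal{P}_{\underline{\Omega}}(\pmb{A}\pmb{B}^\tp)\Vert_F^2 \le (n_1 \vee n_2)\Vert \pmb{B}\Vert_F^2 \Vert \pmb{A}\Vert_{2,\infty}^2$, and the $\min$ of the two estimates is the claim.

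I do not expect a genuine obstacle here: this is a pure counting estimate with no randomness or spectral input. The only point that needs care is bookkeeping the two block contributions separately and matching each edge-sum with the correct degree---$d_1$ for a sum over row indices, $d_2$ for a sum over column indices---so that after dividing by $p = d_1/n_2 = d_2/n_1$ the two pieces recombine exactly into $(n_1 \vee n_2)\Vert \pmb{A}\Vert_F^2$ rather than an asymmetric constant; a related minor point is to retain the bottom-left block of $\mathcal{P}_{\underline{\Omega}}$, not just the top-right one.
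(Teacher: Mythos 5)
Your proposal is correct and follows essentially the same route as the paper's proof: expand $\Vert \mathcal{P}_{\underline{\Omega}}(\pmb{A}\pmb{B}^\tp)\Vert_F^2$ entrywise, apply Cauchy--Schwarz to each sampled inner product, use the $(d_1,d_2)$-biregularity to count how many edges each row/column index meets, and divide by $p=d_1/n_2=d_2/n_1$, with the second half of the $\min$ obtained by symmetry. The only cosmetic difference is that you organize the sum by edges of $\Omega$ split across the two off-diagonal blocks, while the paper sums over rows of the lifted index set $\underline{\Omega}$; the counting is identical.
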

\begin{proof}
Define that $\underline{\Omega}_i$ is the set of entries sampled in the $i$-th row of $\underline{\Omega}$. By the property of $(d_1,d_2)$-biregular graph, we have 
\begin{align*}
\frac{1}{p}\Vert \mathcal{P}_{\underline{\Omega}}(\pmb{A}\pmb{B}^\tp)\Vert_F^2 
=& \frac{1}{p} \sum_{i=1}^{n_1+n_2}\sum_{j\in \underline{\Omega}_i}\langle \pmb{A}_i, \pmb{B}_j\rangle^2
\leq  \frac{1}{p} \sum_{i=1}^{n_1+n_2}\Vert \pmb{A}_i\Vert_2^2\cdot \sum_{j\in \underline{\Omega}_i}\Vert \pmb{B}_j\Vert_2^2\\
\overset{(i)}{\leq}&  \frac{1}{p} \sum_{i=1}^{n_1}\Vert \pmb{A}_i\Vert_2^2\cdot d_1\Vert \pmb{B}\Vert_{2,\infty}^2 + \frac{1}{p} \sum_{i=n_1+1}^{n_1+n_2}\Vert \pmb{A}_i\Vert_2^2\cdot d_2\Vert \pmb{B}\Vert_{2,\infty}^2\\
\leq&(n_1 \vee n_2)\Vert \pmb{A}\Vert_F^2\Vert\pmb{B}\Vert_{2,\infty}^2,
\end{align*} 
where the inequality $(i)$ comes from $\vert \underline{\Omega}_i\vert = d_1$ if $1\leq i\leq n_1$ while $\vert \underline{\Omega}_i\vert = d_2$ if $n_1+1\leq i\leq n_1+n_2$.\\
\indent Similarly we can prove $\frac{1}{p}\Vert \mathcal{P}_{\underline{\Omega}}(\pmb{A}\pmb{B}^\tp)\Vert_F^2 \leq (n_1 \vee n_2) \Vert \pmb{B}\Vert_F^2\Vert\pmb{A}\Vert_{2,\infty}^2$.
\end{proof}

 The following are also some useful lemmas.
\begin{lemma} \cite[Lemma 4]{BV2020} \label{l_10}
Let $\pmb{E}$ be the all $1$'s matrix. 
Then, 
\bea
\left\Vert \frac{1}{p}\pmb{G}-\pmb{E} \right\Vert \leq \frac{C_0\sqrt{n_1n_2}}{\sqrt{d_1\wedge d_2}}.
\eea

\end{lemma}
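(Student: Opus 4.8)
The plan is to recognize $p\,\pmb{E}$ as precisely the leading rank-one term in the singular value decomposition of $\pmb{G}$, so that $\frac1p\pmb{G}-\pmb{E}$ is $\frac1p\pmb{G}$ with its top component stripped off, and then to invoke assumption ${\rm \pmb{\mathscr{G}}_2}$ together with a trivial inequality for $\frac{1}{\sqrt{d_1}}+\frac{1}{\sqrt{d_2}}$.

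First I would write down the SVD of the all-ones matrix: with the unit vectors $\pmb{u}=\pmb{1}_{n_1}/\sqrt{n_1}$ and $\pmb{v}=\pmb{1}_{n_2}/\sqrt{n_2}$, one has $\pmb{E}=\sqrt{n_1 n_2}\,\pmb{u}\pmb{v}^\tp$, which is rank one with single nonzero singular value $\sqrt{n_1 n_2}$ and singular vectors $\pmb{u},\pmb{v}$. Next, by assumptions ${\rm \pmb{\mathscr{G}}_1}$ and ${\rm \pmb{\mathscr{G}}_2}$ the largest singular value of $\pmb{G}$ is $\sigma_1(\pmb{G})=\sqrt{d_1 d_2}$ and its associated top left/right singular vectors are exactly $\pmb{u}$ and $\pmb{v}$ (this is also consistent with $(d_1,d_2)$-biregularity, ${\rm \pmb{\mathscr{G}}_3}$, which gives $\pmb{G}\pmb{1}_{n_2}=d_1\pmb{1}_{n_1}$ and $\pmb{G}^\tp\pmb{1}_{n_1}=d_2\pmb{1}_{n_2}$).

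Consequently the leading rank-one piece of $\pmb{G}$ equals $\sigma_1(\pmb{G})\,\pmb{u}\pmb{v}^\tp=\sqrt{d_1 d_2}\,\pmb{u}\pmb{v}^\tp=\frac{\sqrt{d_1 d_2}}{\sqrt{n_1 n_2}}\,\pmb{E}=p\,\pmb{E}$, using $p=\sqrt{d_1 d_2}/\sqrt{n_1 n_2}$. Deleting the top singular component of a matrix drops its operator norm to the second singular value, since distinct singular subspaces are orthogonal; hence $\Vert\pmb{G}-p\pmb{E}\Vert=\sigma_2(\pmb{G})$ and therefore $\Vert\frac1p\pmb{G}-\pmb{E}\Vert=\sigma_2(\pmb{G})/p$. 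It then remains to substitute $\sigma_2(\pmb{G})\le\frac{C_0}{2}(\sqrt{d_1}+\sqrt{d_2})$ from ${\rm \pmb{\mathscr{G}}_2}$ and $1/p=\sqrt{n_1 n_2}/\sqrt{d_1 d_2}$, which yields
\[
\left\Vert \frac1p\pmb{G}-\pmb{E}\right\Vert \le \frac{C_0\sqrt{n_1 n_2}}{2}\left(\frac{1}{\sqrt{d_1}}+\frac{1}{\sqrt{d_2}}\right)\le \frac{C_0\sqrt{n_1 n_2}}{\sqrt{d_1\wedge d_2}},
\]
the last step using that $1/\sqrt{d_1}$ and $1/\sqrt{d_2}$ are each at most $1/\sqrt{d_1\wedge d_2}$.

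There is essentially no obstacle: the whole argument is the observation that $p\pmb{E}$ is the rank-one spectral projection of $\frac1p\pmb{G}$ onto its top singular direction, plus arithmetic with the sampling rate $p$. The only step that genuinely uses the Ramanujan structure is the bound on $\sigma_2(\pmb{G})$ from ${\rm \pmb{\mathscr{G}}_2}$; assumption ${\rm \pmb{\mathscr{G}}_1}$ is what pins the top singular vectors to the constant vectors so that the leading term is a multiple of $\pmb{E}$.
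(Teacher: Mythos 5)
Your argument is correct, and it is essentially the standard proof behind this result: the paper itself gives no proof, simply citing \cite[Lemma 4]{BV2020}, and that cited proof proceeds exactly as you do, identifying $p\pmb{E}$ with the top singular component $\sqrt{d_1d_2}\,\pmb{u}\pmb{v}^\tp$ of $\pmb{G}$ (guaranteed by ${\rm \pmb{\mathscr{G}}_1}$--${\rm \pmb{\mathscr{G}}_3}$, or directly by biregularity) and then bounding $\sigma_2(\pmb{G})/p$ via ${\rm \pmb{\mathscr{G}}_2}$ and $\tfrac{1}{2}\bigl(\tfrac{1}{\sqrt{d_1}}+\tfrac{1}{\sqrt{d_2}}\bigr)\leq \tfrac{1}{\sqrt{d_1\wedge d_2}}$. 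No gaps; your remark that biregularity makes $(\pmb{u},\pmb{v})$ a genuine singular pair at value $\sqrt{d_1 d_2}$ is precisely what justifies stripping the leading rank-one term.
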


\begin{lemma} \cite[Theorem 4.1]{T1} \label{l_1}
Let $\pmb{G}$ satisfy the assumptions ${\rm \pmb{\mathscr{G}}_1} $, ${\rm \pmb{\mathscr{G}}_2} $ and ${\rm \pmb{\mathscr{G}}_3} $. Then,
\begin{eqnarray*}
\left\Vert \frac{1}{p}\mathcal{P}_{\Omega}(\pX)-\pX \right\Vert \leq \frac{C_0\mu r}{\sqrt{d_1\wedge d_2}}\Vert \pX\Vert,
\end{eqnarray*}
where $C_0$ is the constant in the assumption ${\rm \pmb{\mathscr{G}}_2} $. Moreover, if $\pmb{U}^0\pmb{\Sigma}^0(\pmb{V}^0)^\tp$ is the top-$r$ SVD of $\frac{1}{p}\mathcal{P}_{\Omega}(\pM^*)$, we have $\Vert \pmb{U}^0\pmb{\Sigma}^0(\pmb{V}^0)^\tp-\pmb{M}^*\Vert \leq \frac{2C_0\mu r}{\sqrt{d_1\wedge d_2}}\Vert \pmb{M}^*\Vert$.
\end{lemma}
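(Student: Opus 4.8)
The plan is to control the spectral norm through its bilinear form, $\left\|\frac{1}{p}\mathcal{P}_{\Omega}(\pX)-\pX\right\| = \sup_{\|\pa\|_2=\|\pb\|_2=1}\pa^\tp\left(\frac{1}{p}\mathcal{P}_{\Omega}(\pX)-\pX\right)\pb$, and to exploit the structure of the error matrix. Since $\mathcal{P}_{\Omega}(\pX)=\pmb{G}\circ\pX$ and $\pmb{E}\circ\pX=\pX$ for the all-ones matrix $\pmb{E}$, the error equals $\left(\frac{1}{p}\pmb{G}-\pmb{E}\right)\circ\pX$; one cannot simply bound its norm by $\left\|\frac{1}{p}\pmb{G}-\pmb{E}\right\|\,\|\pX\|$, since Hadamard products do not respect operator norm, so I would instead expand $\pX$ in its SVD $\pX=\sum_{k=1}^r\sigma_k\pmb{u}_k\pmb{v}_k^\tp$ (reading the hypothesis, as in \cite{T1}, for a rank-$r$ matrix $\pX=\pU\pSig\pV^\tp$ whose singular subspaces are $\mu$-incoherent, i.e. $\|\pU_i\|_2^2\le\mu r/n_1$ and $\|\pV_j\|_2^2\le\mu r/n_2$) and use the identity $(\pmb{u}_k\pmb{v}_k^\tp)\circ\pmb{A}=\mathrm{diag}(\pmb{u}_k)\,\pmb{A}\,\mathrm{diag}(\pmb{v}_k)$. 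This gives
\[
\pa^\tp\left(\tfrac{1}{p}\mathcal{P}_{\Omega}(\pX)-\pX\right)\pb=\sum_{k=1}^r\sigma_k\,\big(\mathrm{diag}(\pmb{u}_k)\pa\big)^\tp\left(\tfrac{1}{p}\pmb{G}-\pmb{E}\right)\big(\mathrm{diag}(\pmb{v}_k)\pb\big).
\]

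Next I would bound each summand by $\left\|\frac{1}{p}\pmb{G}-\pmb{E}\right\|\cdot\|\mathrm{diag}(\pmb{u}_k)\pa\|_2\cdot\|\mathrm{diag}(\pmb{v}_k)\pb\|_2$ and invoke Lemma \ref{l_10} for $\left\|\frac{1}{p}\pmb{G}-\pmb{E}\right\|\le\frac{C_0\sqrt{n_1n_2}}{\sqrt{d_1\wedge d_2}}$ (equivalently, one may route this per-$k$ estimate through Lemma \ref{l_6} after centering the vector $\mathrm{diag}(\pmb{u}_k)\pa$). The one place one must be careful is the summation over $k$: bounding $\sigma_k\le\sigma_1$ and $\|\mathrm{diag}(\pmb{u}_k)\pa\|_2\le\|\pmb{u}_k\|_\infty\le\sqrt{\mu r/n_1}$ termwise would cost a spurious extra factor $r$ from $\sum_k 1$. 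Instead I would factor out only $\sigma_1=\|\pX\|$ and apply Cauchy--Schwarz across $k$,
\[
\sum_{k=1}^r\|\mathrm{diag}(\pmb{u}_k)\pa\|_2\|\mathrm{diag}(\pmb{v}_k)\pb\|_2\le\Big(\sum_k\|\mathrm{diag}(\pmb{u}_k)\pa\|_2^2\Big)^{1/2}\Big(\sum_k\|\mathrm{diag}(\pmb{v}_k)\pb\|_2^2\Big)^{1/2},
\]
and then observe $\sum_k\|\mathrm{diag}(\pmb{u}_k)\pa\|_2^2=\sum_i\pa_i^2\sum_k(\pmb{u}_k)_i^2=\sum_i\pa_i^2\|\pU_i\|_2^2\le\frac{\mu r}{n_1}$ by incoherence (this is precisely ${\rm \pmb{\mathscr{A}}_1}$ when $\pX=\pM^*$), and likewise $\sum_k\|\mathrm{diag}(\pmb{v}_k)\pb\|_2^2\le\frac{\mu r}{n_2}$. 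Combining, $\left|\pa^\tp\left(\frac{1}{p}\mathcal{P}_{\Omega}(\pX)-\pX\right)\pb\right|\le\frac{C_0\sqrt{n_1n_2}}{\sqrt{d_1\wedge d_2}}\cdot\|\pX\|\cdot\frac{\mu r}{\sqrt{n_1n_2}}=\frac{C_0\mu r}{\sqrt{d_1\wedge d_2}}\|\pX\|$, and taking the supremum over unit $\pa,\pb$ finishes the first claim.

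For the ``moreover'' part, I would set $\pmb{M}^0=\frac{1}{p}\mathcal{P}_{\Omega}(\pM^*)$ and use that its top-$r$ SVD $\pmb{U}^0\pmb{\Sigma}^0(\pmb{V}^0)^\tp$ is a best rank-$r$ approximation of $\pmb{M}^0$ in operator norm, so $\|\pmb{U}^0\pmb{\Sigma}^0(\pmb{V}^0)^\tp-\pmb{M}^0\|=\sigma_{r+1}(\pmb{M}^0)$; since $\mathrm{rank}(\pM^*)=r$, Weyl's inequality gives $\sigma_{r+1}(\pmb{M}^0)\le\|\pmb{M}^0-\pM^*\|$. Then the triangle inequality together with the first part applied to $\pX=\pM^*$ (which is $\mu$-incoherent of rank $r$ by ${\rm \pmb{\mathscr{A}}_1}$) yields $\|\pmb{U}^0\pmb{\Sigma}^0(\pmb{V}^0)^\tp-\pM^*\|\le2\|\pmb{M}^0-\pM^*\|\le\frac{2C_0\mu r}{\sqrt{d_1\wedge d_2}}\|\pM^*\|$. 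The whole argument is short; the only genuine subtlety I anticipate is the sharp $r$- (not $r^2$-) dependence in the first bound, which hinges on aggregating the incoherence over all $r$ singular directions via the Cauchy--Schwarz step rather than applying it one direction at a time.
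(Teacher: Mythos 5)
Your proposal is correct. Note that the paper itself does not prove this lemma at all -- it is imported verbatim from \cite[Theorem 4.1]{T1} -- so there is no internal proof to compare against; what you have written is a faithful, self-contained reconstruction of the standard argument behind that cited result. The three ingredients you use are exactly the right ones: the identity $(\pmb{u}_k\pmb{v}_k^\tp)\circ\pmb{A}=\mathrm{diag}(\pmb{u}_k)\,\pmb{A}\,\mathrm{diag}(\pmb{v}_k)$ to reduce the Hadamard-masked error to bilinear forms in $\frac{1}{p}\pmb{G}-\pmb{E}$, the spectral-gap bound $\Vert \frac{1}{p}\pmb{G}-\pmb{E}\Vert\leq \frac{C_0\sqrt{n_1n_2}}{\sqrt{d_1\wedge d_2}}$ (Lemma \ref{l_10}, which is where assumptions ${\rm \pmb{\mathscr{G}}_1}$--${\rm \pmb{\mathscr{G}}_3}$ enter), and the Cauchy--Schwarz aggregation $\sum_k\Vert\mathrm{diag}(\pmb{u}_k)\pa\Vert_2^2=\sum_i \pa_i^2\Vert\pU_i\Vert_2^2\leq \mu r/n_1$, which is precisely what yields the linear (rather than quadratic) dependence on $r$. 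You are also right to flag that the lemma as stated suppresses the hypothesis that $\pX$ is rank-$r$ and $\mu$-incoherent (assumption ${\rm \pmb{\mathscr{A}}_1}$); that hypothesis is needed and is how the lemma is applied in the paper (to $\pX=\pM^*$). The ``moreover'' part via Eckart--Young plus Weyl and the triangle inequality is the standard derivation and is correct.
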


\begin{lemma} \cite[Lemma 4]{T2} \label{l_2}
Let $\overline{\pmb{Z}}{}^0$, $\pmb{Z}^* \in \mathbb{R}^{(n_1+n_2)\times r}$ be defined as before. We have
\begin{eqnarray*}
\Vert \overline{\pmb{Z}}{}^0(\overline{\pmb{Z}}{}^0)^\tp - \pmb{Z}^*(\pmb{Z}^*)^\tp\Vert_F \leq 2 \Vert \pmb{U}^0\pmb{\Sigma}^0 (\pmb{V}^0)^\tp - \pmb{U}^*\pmb{\Sigma}^* (\pmb{V}^*)^\tp\Vert_F.
\end{eqnarray*}
\end{lemma}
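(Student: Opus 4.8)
\quad The plan is to pass to the natural $2\times 2$ block structure coming from the lifting and then reduce everything to one elementary linear-algebraic inequality. Recall $\overline{\pmb{Z}}{}^0$ is the stacked matrix with blocks $\pmb{U}^0(\pmb{\Sigma}^0)^{1/2}$ and $\pmb{V}^0(\pmb{\Sigma}^0)^{1/2}$, where $\pmb{U}^0\pmb{\Sigma}^0(\pmb{V}^0)^\tp$ is the top-$r$ SVD of $\pmb{M}^0=\frac{1}{p}\mathcal{P}_\Omega(\pmb{M}^*)$, and $\pmb{Z}^*$ is stacked from $\pmb{U}^*(\pmb{\Sigma}^*)^{1/2}$ and $\pmb{V}^*(\pmb{\Sigma}^*)^{1/2}$. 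First I would multiply these out: $\overline{\pmb{Z}}{}^0(\overline{\pmb{Z}}{}^0)^\tp$ and $\pmb{Z}^*(\pmb{Z}^*)^\tp$ are symmetric block matrices whose off-diagonal blocks are $\pmb{U}^0\pmb{\Sigma}^0(\pmb{V}^0)^\tp$ and $\pmb{M}^*$ respectively, and whose diagonal blocks are $\pmb{U}^0\pmb{\Sigma}^0(\pmb{U}^0)^\tp,\ \pmb{V}^0\pmb{\Sigma}^0(\pmb{V}^0)^\tp$ and $\pmb{U}^*\pmb{\Sigma}^*(\pmb{U}^*)^\tp,\ \pmb{V}^*\pmb{\Sigma}^*(\pmb{V}^*)^\tp$ respectively. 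Splitting the squared Frobenius norm over the four blocks then gives
\[
\big\|\overline{\pmb{Z}}{}^0(\overline{\pmb{Z}}{}^0)^\tp - \pmb{Z}^*(\pmb{Z}^*)^\tp\big\|_F^2 = 2\big\|\pmb{U}^0\pmb{\Sigma}^0(\pmb{V}^0)^\tp - \pmb{M}^*\big\|_F^2 + D_{\pmb{U}} + D_{\pmb{V}},
\]
where $D_{\pmb{U}} := \|\pmb{U}^0\pmb{\Sigma}^0(\pmb{U}^0)^\tp - \pmb{U}^*\pmb{\Sigma}^*(\pmb{U}^*)^\tp\|_F^2$ and $D_{\pmb{V}} := \|\pmb{V}^0\pmb{\Sigma}^0(\pmb{V}^0)^\tp - \pmb{V}^*\pmb{\Sigma}^*(\pmb{V}^*)^\tp\|_F^2$.

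The core of the argument, and the only step I expect to need any thought, is the inequality $D_{\pmb{U}} + D_{\pmb{V}} \le 2\|\pmb{U}^0\pmb{\Sigma}^0(\pmb{V}^0)^\tp - \pmb{M}^*\|_F^2$, a general fact about any two matrices presented through their SVDs. To prove it I would first use that $\|\pmb{U}\pmb{\Sigma}\pmb{U}^\tp\|_F^2 = \text{tr}(\pmb{\Sigma}^2) = \|\pmb{U}\pmb{\Sigma}\pmb{V}^\tp\|_F^2$ whenever $\pmb{U},\pmb{V}$ have orthonormal columns, so that after cancelling the equal norm-squared terms on both sides the inequality becomes a pure comparison of cross terms, namely
\[
\la \pmb{U}^0\pmb{\Sigma}^0(\pmb{U}^0)^\tp, \pmb{U}^*\pmb{\Sigma}^*(\pmb{U}^*)^\tp\ra + \la \pmb{V}^0\pmb{\Sigma}^0(\pmb{V}^0)^\tp, \pmb{V}^*\pmb{\Sigma}^*(\pmb{V}^*)^\tp\ra \ \ge\ 2\la \pmb{U}^0\pmb{\Sigma}^0(\pmb{V}^0)^\tp, \pmb{M}^*\ra.
\]
Introducing the $r\times r$ Gram matrices $\pmb{P} := (\pmb{U}^0)^\tp\pmb{U}^*$ and $\pmb{Q} := (\pmb{V}^0)^\tp\pmb{V}^*$ and using the cyclic property of the trace (together with $\text{tr}(\pmb{\Sigma}^0\pmb{Q}\pmb{\Sigma}^*\pmb{P}^\tp)=\text{tr}(\pmb{\Sigma}^0\pmb{P}\pmb{\Sigma}^*\pmb{Q}^\tp)$), the three inner products equal $\text{tr}(\pmb{\Sigma}^0\pmb{P}\pmb{\Sigma}^*\pmb{P}^\tp)$, $\text{tr}(\pmb{\Sigma}^0\pmb{Q}\pmb{\Sigma}^*\pmb{Q}^\tp)$ and $\text{tr}(\pmb{\Sigma}^0\pmb{P}\pmb{\Sigma}^*\pmb{Q}^\tp)$, so the displayed inequality is exactly $\text{tr}\!\big(\pmb{\Sigma}^0(\pmb{P}-\pmb{Q})\pmb{\Sigma}^*(\pmb{P}-\pmb{Q})^\tp\big)\ge 0$, which holds because that quantity equals $\|(\pmb{\Sigma}^0)^{1/2}(\pmb{P}-\pmb{Q})(\pmb{\Sigma}^*)^{1/2}\|_F^2$.

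Finally I would combine the two displays to obtain $\|\overline{\pmb{Z}}{}^0(\overline{\pmb{Z}}{}^0)^\tp - \pmb{Z}^*(\pmb{Z}^*)^\tp\|_F^2 \le 4\|\pmb{U}^0\pmb{\Sigma}^0(\pmb{V}^0)^\tp - \pmb{M}^*\|_F^2$ and take square roots, recalling $\pmb{M}^* = \pmb{U}^*\pmb{\Sigma}^*(\pmb{V}^*)^\tp$, which yields the claimed factor $2$. It is worth noting that this lemma uses no incoherence assumption, no property of the Ramanujan graph, and no relationship between $\pmb{M}^0$ and $\pmb{M}^*$; it is a deterministic linear-algebra statement about balanced lifts of low-rank matrices (cf.\ the Procrustes-flow analysis in \cite{pmlr-v48-tu16, T2}), and the short trace computation above makes it self-contained.
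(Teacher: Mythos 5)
Your proof is correct. Be aware, though, that the paper itself never proves this statement: it is imported verbatim from \cite[Lemma 4]{T2} and used as a black box in the initialization analyses, so your argument is not competing with an in-paper proof but supplying the omitted one, and it does so validly. The block expansion correctly isolates the two off-diagonal contributions $2\Vert \pmb{U}^0\pmb{\Sigma}^0(\pmb{V}^0)^\tp-\pmb{M}^*\Vert_F^2$ plus the diagonal terms $D_{\pmb{U}}+D_{\pmb{V}}$; cancelling $\mathrm{tr}((\pmb{\Sigma}^0)^2)+\mathrm{tr}((\pmb{\Sigma}^*)^2)$ on both sides is legitimate because $\pmb{U}^0,\pmb{V}^0,\pmb{U}^*,\pmb{V}^*$ all have orthonormal columns; the identification of the three cross terms with $\mathrm{tr}(\pmb{\Sigma}^0\pmb{P}\pmb{\Sigma}^*\pmb{P}^\tp)$, $\mathrm{tr}(\pmb{\Sigma}^0\pmb{Q}\pmb{\Sigma}^*\pmb{Q}^\tp)$ and $\mathrm{tr}(\pmb{\Sigma}^0\pmb{P}\pmb{\Sigma}^*\pmb{Q}^\tp)$ is right, with the needed symmetry $\mathrm{tr}(\pmb{\Sigma}^0\pmb{Q}\pmb{\Sigma}^*\pmb{P}^\tp)=\mathrm{tr}(\pmb{\Sigma}^0\pmb{P}\pmb{\Sigma}^*\pmb{Q}^\tp)$ following from transposing and cycling exactly as you say; and the final step $\mathrm{tr}\big(\pmb{\Sigma}^0(\pmb{P}-\pmb{Q})\pmb{\Sigma}^*(\pmb{P}-\pmb{Q})^\tp\big)=\Vert(\pmb{\Sigma}^0)^{1/2}(\pmb{P}-\pmb{Q})(\pmb{\Sigma}^*)^{1/2}\Vert_F^2\ge 0$ uses only that $\pmb{\Sigma}^0,\pmb{\Sigma}^*$ are diagonal with nonnegative entries, so the argument survives even if $\pmb{M}^0$ has rank below $r$. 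This is the standard symmetric-lifting computation behind the cited lemma and the Procrustes-flow line of work, so the route is the expected one; your closing remark is also accurate within this paper: the lemma needs no incoherence or Ramanujan-graph structure, and all sampling-dependent content enters separately through Lemma \ref{l_1} when Lemma \ref{l_2} is applied in the proofs of Theorem \ref{thm:initialization2} and Theorem \ref{1}.
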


The following two lemmas show that the projections $\mathcal{P}_{\mathcal{C}_1}$ and $\mathcal{P}_{\mathcal{C}_2}$ are non-expansive and incoherent.
\begin{lemma} \cite[Lemma 11]{T2} \label{l_3}
Let $\pmb{y}\in\mathbb{R}^n$ be a vector such that $\Vert \pmb{y}\Vert_2\leq a$, for any $x \in \mathbb{R}^n$. Then 
\begin{eqnarray*}
\Vert \mathcal{P}_{\Vert \cdot \Vert_2 \leq a}(\pmb{x})-\pmb{y}\Vert_2^2 \leq \Vert \pmb{x}-\pmb{y}\Vert_2^2.
\end{eqnarray*}
\end{lemma}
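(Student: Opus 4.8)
The plan is to exploit the explicit closed form of the Euclidean-ball projection and reduce the statement to an elementary scalar inequality. Recall that
\[
\mathcal{P}_{\Vert\cdot\Vert_2\le a}(\pmb{x}) = \begin{cases} \pmb{x}, & \Vert\pmb{x}\Vert_2\le a,\\ \dfrac{a}{\Vert\pmb{x}\Vert_2}\,\pmb{x}, & \Vert\pmb{x}\Vert_2 > a.\end{cases}
\]
In the first case $\mathcal{P}_{\Vert\cdot\Vert_2\le a}(\pmb{x})=\pmb{x}$ and the claimed inequality is an identity, so the only case requiring work is $\Vert\pmb{x}\Vert_2>a$.

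In that case I would set $t=a/\Vert\pmb{x}\Vert_2\in(0,1)$, so that $\mathcal{P}_{\Vert\cdot\Vert_2\le a}(\pmb{x})=t\pmb{x}$. Expanding both squared norms and subtracting gives the identity
\[
\Vert\pmb{x}-\pmb{y}\Vert_2^2-\Vert t\pmb{x}-\pmb{y}\Vert_2^2 = (1-t^2)\Vert\pmb{x}\Vert_2^2 - 2(1-t)\la\pmb{x},\pmb{y}\ra = (1-t)\big[(1+t)\Vert\pmb{x}\Vert_2^2-2\la\pmb{x},\pmb{y}\ra\big].
\]
Since $1-t>0$, it then remains only to verify that the bracketed term is nonnegative. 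By Cauchy--Schwarz together with the hypothesis $\Vert\pmb{y}\Vert_2\le a=t\Vert\pmb{x}\Vert_2$,
\[
2\la\pmb{x},\pmb{y}\ra \le 2\Vert\pmb{x}\Vert_2\Vert\pmb{y}\Vert_2 \le 2t\Vert\pmb{x}\Vert_2^2 \le (1+t)\Vert\pmb{x}\Vert_2^2,
\]
the last step because $2t\le 1+t$ whenever $t\le1$. Hence the bracket is $\ge 0$, the displayed difference is $\ge 0$, and the lemma follows.

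Honestly there is no real obstacle here: the statement is just the non-expansiveness of metric projection onto a closed convex set, specialized to the ball $K=\{\pmb{z}:\Vert\pmb{z}\Vert_2\le a\}$, and the only point deserving a little care is to invoke $\Vert\pmb{y}\Vert_2\le a$ (equivalently $\pmb{y}\in K$) at exactly the right moment. An equally short alternative would be to use the obtuse-angle characterization $\la\pmb{x}-\mathcal{P}_K(\pmb{x}),\,\pmb{z}-\mathcal{P}_K(\pmb{x})\ra\le 0$ for all $\pmb{z}\in K$, take $\pmb{z}=\pmb{y}$, and expand $\Vert\pmb{x}-\pmb{y}\Vert_2^2=\Vert\pmb{x}-\mathcal{P}_K(\pmb{x})\Vert_2^2+2\la\pmb{x}-\mathcal{P}_K(\pmb{x}),\mathcal{P}_K(\pmb{x})-\pmb{y}\ra+\Vert\mathcal{P}_K(\pmb{x})-\pmb{y}\Vert_2^2\ge\Vert\mathcal{P}_K(\pmb{x})-\pmb{y}\Vert_2^2$; I would nonetheless present the explicit two-case computation, since it is completely self-contained and matches the elementary level of the statement.
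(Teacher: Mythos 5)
Your proof is correct: the two-case computation is sound (the identity $\Vert\pmb{x}-\pmb{y}\Vert_2^2-\Vert t\pmb{x}-\pmb{y}\Vert_2^2=(1-t)\bigl[(1+t)\Vert\pmb{x}\Vert_2^2-2\la\pmb{x},\pmb{y}\ra\bigr]$ and the chain $2\la\pmb{x},\pmb{y}\ra\le 2t\Vert\pmb{x}\Vert_2^2\le(1+t)\Vert\pmb{x}\Vert_2^2$ both check out), and it uses the hypothesis $\Vert\pmb{y}\Vert_2\le a$ exactly where needed. Note that the paper itself gives no proof of this lemma, citing it directly from \cite[Lemma 11]{T2}; your self-contained argument (equivalently, the non-expansiveness of projection onto the convex ball via the obtuse-angle characterization) is the standard one and fully establishes the claim.
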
 

\begin{lemma} \cite[Lemma 19]{TMC2021} \label{non_expan}
Suppose that $\pM^*$ is $\mu$-incoherent, $\pZ = [\pX^{\tp}\ \pY^{\tp}]^{\tp}$ and $\dist_*(\pZ, \pZ^*) \leq \alpha \sigma_r^*$ for some $\alpha < 1$. Set $B \geq (1+\alpha)\sqrt{\mu r} \sigma_1(\pM^*)$, then $\mathcal{P}_{\mathcal{C}_2}(\pZ)$ satisfies the non-expansiveness
\bea
\dist_*(\mathcal{P}_{\mathcal{C}_2}(\pZ), \pZ^*) \leq \dist_*(\pZ, \pZ^*),
\eea 
and the incoherence condition
\bea
\sqrt{n_1} \| \pX\pY^{\tp} \|_{2,\infty} \vee \sqrt{n_1} \| \pY\pX^{\tp} \|_{2,\infty}  \leq B.
\eea
\end{lemma}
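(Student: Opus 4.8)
The plan is to first pin down the closed-form description of the scaled projection, read off the incoherence claim from it, and then prove non-expansiveness by exploiting convexity. Write the input as $\pZ=[\pX^{\tp}\ \pY^{\tp}]^{\tp}$ and the output as $\widetilde{\pZ}=\mathcal P_{\mathcal C_2}(\pZ)=[\widetilde{\pX}{}^{\tp}\ \widetilde{\pY}{}^{\tp}]^{\tp}$. For the closed form, note that the objective of $\mathcal P_{\mathcal C_2}$ and its two constraints all decouple across the rows of $\pX'$ (and of $\pY'$): the $i$-th row $\pX'_i$ enters only through $\|(\pX'_i-\pX_i)[(\pY)^{\tp}\pY]^{1/2}\|_2^2$ and the scalar constraint $\sqrt{n_1}\|\pX'_i[(\pY)^{\tp}\pY]^{1/2}\|_2\le B$, and $\|v[(\pY)^{\tp}\pY]^{1/2}\|_2=\|v\pY^{\tp}\|_2$ for every row vector $v$. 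After the (possibly generalized) invertible change of variables $v\mapsto v[(\pY)^{\tp}\pY]^{1/2}$, each row problem becomes the Euclidean projection of $\pX_i[(\pY)^{\tp}\pY]^{1/2}$ onto the $\ell_2$-ball of radius $B/\sqrt{n_1}$, and transforming back reproduces the stated formula $\widetilde{\pX}_i=(1\wedge B/(\sqrt{n_1}\|\pX_i\pY^{\tp}\|_2))\,\pX_i$ together with its $\pY$-analogue. In particular $\widetilde{\pX}_i=t_i\pX_i$ and $\widetilde{\pY}_j=s_j\pY_j$ for some $t_i,s_j\in(0,1]$, and by construction $\sqrt{n_1}\|\widetilde{\pX}_i\pY^{\tp}\|_2\le B$, $\sqrt{n_2}\|\widetilde{\pY}_j\pX^{\tp}\|_2\le B$.

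The incoherence claim then follows immediately: writing $T=\mathrm{diag}(t_i)$ and $S=\mathrm{diag}(s_j)$ we have $\widetilde{\pX}\widetilde{\pY}{}^{\tp}=T(\pX\pY^{\tp})S$, and right- (resp. left-) multiplying a matrix by a diagonal matrix with entries in $[0,1]$ never increases any of its row (resp. column) $\ell_2$-norms; hence $\|(\widetilde{\pX}\widetilde{\pY}{}^{\tp})_i\|_2\le t_i\|(\pX\pY^{\tp})_i\|_2=\|\widetilde{\pX}_i\pY^{\tp}\|_2\le B/\sqrt{n_1}$, and symmetrically for the columns of $\widetilde{\pX}\widetilde{\pY}{}^{\tp}$, i.e.\ for the rows of $\widetilde{\pY}\widetilde{\pX}{}^{\tp}$. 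Taking maxima gives $\sqrt{n_1}\|\widetilde{\pX}\widetilde{\pY}{}^{\tp}\|_{2,\infty}\vee\sqrt{n_2}\|\widetilde{\pY}\widetilde{\pX}{}^{\tp}\|_{2,\infty}\le B$.

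For non-expansiveness, the decisive observation is that, with $\pX,\pY$ fixed, the objective of $\mathcal P_{\mathcal C_2}$ is a squared (semi-)Hilbertian norm $\|\pZ'-\pZ\|_\star^2$ of $\pZ'-\pZ$, and the feasible set $\mathcal C(\pZ)=\{\pZ':\sqrt{n_1}\|\pX'\pY^{\tp}\|_{2,\infty}\le B,\ \sqrt{n_2}\|\pY'\pX^{\tp}\|_{2,\infty}\le B\}$ is convex (a max of Euclidean norms in each block). Thus $\widetilde{\pZ}$ is a metric projection onto a convex set, and obeys the Pythagorean inequality $\|\pZ'-\pZ\|_\star^2\ge\|\pZ'-\widetilde{\pZ}\|_\star^2+\|\pZ-\widetilde{\pZ}\|_\star^2$ for every $\pZ'\in\mathcal C(\pZ)$. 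I would apply this with a carefully chosen competitor. Since $\alpha<1$ we have $\dist_*(\pZ,\pZ^*)<\sigma_r^*$, so Lemma~\ref{lem:alignment1} guarantees that the optimal alignment $\pQ=\oQ$ of $\pZ$ exists; set $\pDeX=\pX\pQ-\pX^*$, $\pDeY=\pY\pQ^{-\tp}-\pY^*$ and $\widehat{\pZ}=[(\pX^*\pQ^{-1})^{\tp}\ (\pY^*\pQ^{\tp})^{\tp}]^{\tp}$. Using $\pX^*\pQ^{-1}\pY^{\tp}=\pM^*+\pX^*\pDeY^{\tp}$, the incoherence bounds $\|\pM^*_i\|_2\le\sqrt{\mu r}\,\sigma_1^*/\sqrt{n_1}$ and $\|\pX^*_i\|_2\le\sqrt{\mu r\,\sigma_1^*}/\sqrt{n_1}$ from ${\rm \pmb{\mathscr{A}}_1}$, and $\|\pDeY\|\le\|\pDeY(\pmb{\Sigma}^*)^{1/2}\|_F/\sqrt{\sigma_r^*}\le\dist_*(\pZ,\pZ^*)/\sqrt{\sigma_r^*}\le\alpha\sqrt{\sigma_r^*}$, a row-wise estimate yields $\sqrt{n_1}\|\pX^*\pQ^{-1}\pY^{\tp}\|_{2,\infty}\le(1+\alpha)\sqrt{\mu r}\,\sigma_1^*\le B$, and symmetrically for the $\pY$-block, so $\widehat{\pZ}\in\mathcal C(\pZ)$ — this is exactly where $\mu$-incoherence of $\pM^*$ and the choice $B\ge(1+\alpha)\sqrt{\mu r}\sigma_1^*$ are used.

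It then remains to convert the Pythagorean inequality, evaluated at $\pZ'=\widehat{\pZ}$, back into a statement about $\dist_*$. Using $\pY^{\tp}=\pQ(\pY^*+\pDeY)^{\tp}$ and $\pX^{\tp}=\pQ^{-\tp}(\pX^*+\pDeX)^{\tp}$ one rewrites $\|\pZ-\widehat{\pZ}\|_\star^2=\|\pDeX(\pY^*+\pDeY)^{\tp}\|_F^2+\|\pDeY(\pX^*+\pDeX)^{\tp}\|_F^2$ and $\|\widetilde{\pZ}-\widehat{\pZ}\|_\star^2=\|(\widetilde{\pX}\pQ-\pX^*)(\pY^*+\pDeY)^{\tp}\|_F^2+\|(\widetilde{\pY}\pQ^{-\tp}-\pY^*)(\pX^*+\pDeX)^{\tp}\|_F^2$. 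Expanding each around the $(\pmb{\Sigma}^*)^{1/2}$-weighted squared errors — which equal $\dist_*^2(\pZ,\pZ^*)$ for the first and upper-bound $\dist_*^2(\widetilde{\pZ},\pZ^*)$ for the second — the cross terms that appear are of size comparable to $\|\pZ-\widetilde{\pZ}\|_\star$ times a weighted error, which is precisely what the extra term $\|\pZ-\widetilde{\pZ}\|_\star^2$ on the small side of the Pythagorean inequality can absorb. Carrying out this bookkeeping, keeping the perturbative terms under control via the margin $\alpha<1$, yields $\dist_*^2(\widetilde{\pZ},\pZ^*)\le\dist_*^2(\pZ,\pZ^*)$. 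I expect this last step — organizing the two expansions so that the correction terms cancel against the extra projection term and the contraction constant comes out exactly $1$ — to be the main obstacle; a minor additional technicality is the case where $\pX^{\tp}\pX$ or $\pY^{\tp}\pY$ is singular, handled by passing to the generalized inverse and working modulo the null space of the seminorm $\|\cdot\|_\star$.
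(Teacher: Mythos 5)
Your closed-form description of the projection, the incoherence half (via the diagonal scalings $t_i,s_j\in(0,1]$ and the constraint $\sqrt{n_1}\|\widetilde{\pX}_i\oY^{\tp}\|_2\le B$), and the feasibility computation $\sqrt{n_1}\|\pX^*\pQ^{-1}\oY^{\tp}\|_{2,\infty}\le(1+\alpha)\sqrt{\mu r}\,\sigma_1^*\le B$ are all correct, and they do mirror ingredients of the actual argument (note the paper itself does not prove this lemma; it imports it from \cite{TMC2021}). The gap is in the non-expansiveness half, exactly where you flag uncertainty, and it is not a bookkeeping issue but a structural one. Your plan is: Pythagoras in the seminorm $\|\cdot\|_\star$ (weighted by $(\oY)^{\tp}\oY$ and $(\oX)^{\tp}\oX$) against the competitor $\widehat{\pZ}$, then convert both sides to the $(\pSig^*)^{1/2}$-weighted distance and absorb the conversion error into the surplus term $\|\pZ-\widetilde{\pZ}\|_\star^2$. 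Quantify this: writing $\pmb{W}=\pY^*+\pDeY$, $\pmb{A}=\widetilde{\pX}\pQ-\pX^*$, $\pmb{E}=(\widetilde{\pX}-\pX)\pQ$, the block Pythagorean inequality gives $\|\pmb{A}(\pSig^*)^{1/2}\|_F^2\le\|\pDeX(\pSig^*)^{1/2}\|_F^2-\langle\pDeX^{\tp}\pmb{E}+\pmb{E}^{\tp}\pDeX+\pmb{E}^{\tp}\pmb{E},\,\pmb{W}^{\tp}\pmb{W}-\pSig^*\rangle-\|\pmb{E}\pmb{W}^{\tp}\|_F^2$. The conversion error (the middle term) is of order $\alpha\,\|\pDeX(\pSig^*)^{1/2}\|_F\,\|\pmb{E}(\pSig^*)^{1/2}\|_F$, i.e.\ \emph{first order} in the projection displacement, whereas the absorbing surplus is only \emph{second order} in it; so whenever a row is clipped only slightly, the absorption fails and your route yields at best a contraction factor like $\frac{1+\alpha}{1-\alpha}>1$, not the exact constant $1$ claimed by the lemma. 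Bounding the cross term by Cauchy--Schwarz throws away precisely the sign information that makes the statement true.

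The proof that actually works (and is the one in \cite{TMC2021}) is row-wise and sign-aware, and your ingredients almost assemble into it: with $\pQ$ the optimal alignment of the \emph{input} $\pZ$ and $\widetilde{\pX}_i=t_i\pX_i$, one computes for a clipped row ($t_i<1$)
\begin{align*}
\|(t_i\pX_i\pQ-\pX_i^*)(\pSig^*)^{1\over2}\|_2^2-\|(\pX_i\pQ-\pX_i^*)(\pSig^*)^{1\over2}\|_2^2
=(1-t_i)\bigl[\,2\langle\pX_i\pQ\pSig^*,\pX_i^*\rangle-(1+t_i)\|\pX_i\pQ(\pSig^*)^{1\over2}\|_2^2\,\bigr],
\end{align*}
and shows the bracket is nonpositive: by Cauchy--Schwarz it is at most $\|\pX_i\pQ(\pSig^*)^{1\over2}\|_2\bigl[2\|\pX_i^*(\pSig^*)^{1\over2}\|_2-(1+t_i)\|\pX_i\pQ(\pSig^*)^{1\over2}\|_2\bigr]$, and since $\|\pX_i\pQ(\pSig^*)^{1\over2}\|_2\ge\|\pX_i\oY^{\tp}\|_2/(1+\alpha)$ (because $\|(\pY^*+\pDeY)(\pSig^*)^{-1/2}\|\le1+\alpha$), $t_i\|\pX_i\oY^{\tp}\|_2=B/\sqrt{n_1}$ and $\|\pX_i\oY^{\tp}\|_2>B/\sqrt{n_1}$, one gets $(1+t_i)\|\pX_i\pQ(\pSig^*)^{1\over2}\|_2\ge\frac{2B}{(1+\alpha)\sqrt{n_1}}\ge\frac{2\sqrt{\mu r}\,\sigma_1^*}{\sqrt{n_1}}\ge2\|\pX_i^*(\pSig^*)^{1\over2}\|_2$ by ${\rm \pmb{\mathscr{A}}_1}$. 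Summing over rows (and symmetrically over the $\pY$-block) gives contraction with constant exactly $1$ in the same alignment $\pQ$, hence for $\dist_*$. So the fix is to abandon the Pythagorean/absorption step and use this pointwise quadratic comparison, which exploits that the clipping direction is exactly along $\pX_i$ and that clipped rows are necessarily large compared with the incoherent target rows.
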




\subsection{Proof of Local Curvature and Smoothness Condition} \label{prf_prop34}
Before we give the proofs of  Proposition \ref{prop_curvature} and \ref{prop_smoothness}, we give some upper bounds, which can be found in \cite{TMC2021}.
\begin{lemma} \cite[Claim 6]{TMC2021} \label{lem:upperbound}
Assume that $\dist_*(\pZ,\pZ^*) \leq \alpha \sigma_r^*$ and $\sqrt{n_1}\| \pX\pY^\tp \|_{2,\infty} \vee \sqrt{n_2}\| \pY\pX^\tp \|_{2,\infty} \leq (1+\alpha)\sqrt{\mu r}\sigma_1^*$ where $\pZ = [\pX^\tp \  \pY^\tp]^\tp$, $\pZ^* = [(\pX^*)^\tp \  (\pY^*)^\tp]^\tp$ and $\alpha <1$,  one has
\bea
&&\| (\oX\ \oQ -\pX^*)(\pSig^*)^{-{1\over2}} \| \vee \| (\oY (\oQ)^{-\tp}-\pY^*) (\pSig^*)^{-{1\over2}} \| \leq \alpha; \\
&&\| \pY(\pY^\tp\pY)^{-1}(\pSig^*)^{1\over2} \| \leq \frac{1}{1-\alpha}  ;\\
&&\| (\pSig^*)^{1\over2}(\pY^\tp\pY)^{-1}(\pSig^*)^{1\over2} \| \leq \frac{1}{(1-\alpha)^2}  ;\\
&&  \sqrt{n_1}\| \pX(\pSig^*)^{\frac{1}{2}} \|_{2,\infty} \vee \sqrt{n_2}\| \pY(\pSig^*)^{\frac{1}{2}} \|_{2,\infty} \leq \frac{1+\alpha}{1-\alpha}\sqrt{\mu r}\sigma_1^*;\\
&&  \sqrt{n_1}\| \pX(\pSig^*)^{-\frac{1}{2}} \|_{2,\infty} \vee \sqrt{n_2}\| \pY(\pSig^*)^{-\frac{1}{2}} \|_{2,\infty}  \leq \frac{(1+\alpha)\kappa\sqrt{\mu r}}{1-\alpha};\\
&&  \sqrt{n_1}\| (\oX\ \oQ -\pX^*)(\pSig^*)^{\frac{1}{2}} \|_{2,\infty} \vee \sqrt{n_2}\| [ \oY (\oQ)^{-\tp}-\pY^*](\pSig^*)^{\frac{1}{2}} \|_{2,\infty}  \leq \frac{2}{1-\alpha}\sqrt{\mu r}\sigma_1^*.
\eea
\end{lemma}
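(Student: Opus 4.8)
The plan is to follow \cite[Claim 6]{TMC2021}: the statement is purely algebraic (no randomness and no Ramanujan structure enter), so it transfers essentially verbatim. Throughout I would read $\pX,\pY$ in the last five displays as the \emph{aligned} factors $\oX\oQ$ and $\oY(\oQ)^{-\tp}$, for which $\pX\pY^\tp=\oX\oY^\tp$ and the two hypotheses are unchanged, and I keep $\pDeX=\oX\oQ-\pX^*$, $\pDeY=\oY(\oQ)^{-\tp}-\pY^*$. By the symmetry between the two blocks it is enough to prove every bound on the $\pX$/$\pU^*$ side. The first display I would get immediately from Definition \ref{def_dist*}: optimality of $\oQ$ gives $\|\pDeX(\pSig^*)^{1/2}\|_F^2+\|\pDeY(\pSig^*)^{1/2}\|_F^2=\dist_*^2(\oZ,\pZ^*)\le\alpha^2(\sigma_r^*)^2$, and since the least diagonal entry of $\pSig^*$ is $\sigma_r^*$, $\|\pDeX(\pSig^*)^{-1/2}\|\le\|\pDeX(\pSig^*)^{1/2}(\pSig^*)^{-1}\|_F\le\|\pDeX(\pSig^*)^{1/2}\|_F\,\|(\pSig^*)^{-1}\|\le\alpha$, and likewise for $\pDeY$.

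The only step that requires genuine work is the pair of displays two and three, and these are equivalent since $\|\pY(\pY^\tp\pY)^{-1}(\pSig^*)^{1/2}\|^2=\|(\pSig^*)^{1/2}(\pY^\tp\pY)^{-1}(\pSig^*)^{1/2}\|$. To bound the right-hand side by $(1-\alpha)^{-2}$ it suffices to show $\pY^\tp\pY\succeq(1-\alpha)^2\pSig^*$. I would write $\pY(\pSig^*)^{-1/2}=\pV^*+\pDeY(\pSig^*)^{-1/2}$; since $\pV^*$ has orthonormal columns and $\|\pDeY(\pSig^*)^{-1/2}\|\le\alpha$ by the first display, Weyl's inequality gives $\sigma_r(\pY(\pSig^*)^{-1/2})\ge 1-\alpha$, i.e. $(\pSig^*)^{-1/2}\pY^\tp\pY(\pSig^*)^{-1/2}\succeq(1-\alpha)^2\pmb{I}$, which rearranges to the claim (and shows $\pY$ has full column rank); inverting and conjugating back by $(\pSig^*)^{1/2}$ then yields both displays.

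For displays four through six I would use that $\pY$ has full column rank, so $\pX(\pSig^*)^{\pm1/2}=\pX\pY^\tp\cdot\pY(\pY^\tp\pY)^{-1}(\pSig^*)^{\pm1/2}$; taking row-wise $\ell_2$ norms, $\|\pX(\pSig^*)^{\pm1/2}\|_{2,\infty}\le\|\pX\pY^\tp\|_{2,\infty}\,\|\pY(\pY^\tp\pY)^{-1}(\pSig^*)^{\pm1/2}\|$, where the operator-norm factor is at most $(1-\alpha)^{-1}$ for exponent $+$ and at most $(1-\alpha)^{-1}(\sigma_r^*)^{-1}$ for exponent $-$ (insert $(\pSig^*)^{-1}$ and use display two), while $\|\pX\pY^\tp\|_{2,\infty}=\|\oX\oY^\tp\|_{2,\infty}\le(1+\alpha)\sqrt{\mu r}\sigma_1^*/\sqrt{n_1}$ by hypothesis; this is displays four and five, the latter after $\sigma_1^*/\sigma_r^*=\kappa$. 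For display six I would split $\pDeX(\pSig^*)^{1/2}=\pX(\pSig^*)^{1/2}-\pX^*(\pSig^*)^{1/2}$ with $\pX^*(\pSig^*)^{1/2}=\pU^*\pSig^*$, bound $\|\pU^*\pSig^*\|_{2,\infty}\le\|\pU^*\|_{2,\infty}\sigma_1^*\le\sqrt{\mu r/n_1}\,\sigma_1^*$ via ${\rm \pmb{\mathscr{A}}_1}$, and add it to display four to get the prefactor $\tfrac{1+\alpha}{1-\alpha}+1=\tfrac{2}{1-\alpha}$.

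I do not anticipate any real obstacle: the lemma is bookkeeping once the one-line Weyl estimate of the second paragraph is in place, which is why \cite{TMC2021} labels it a claim. The only point worth flagging is the reading of the displays as statements about the aligned factors — for the raw iterate they would not even be scale-invariant, since $\oX\mapsto c\,\oX$, $\oY\mapsto c^{-1}\oY$ fixes both $\dist_*(\oZ,\pZ^*)$ and $\oX\oY^\tp$ while rescaling $\|\oX(\pSig^*)^{1/2}\|_{2,\infty}$ and $\oY^\tp\oY$.
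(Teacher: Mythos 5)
Your proposal is correct. The paper does not prove this lemma at all --- it imports it verbatim as \cite[Claim 6]{TMC2021} --- and your reconstruction follows the same standard route as the cited source: the Weyl-type bound $\sigma_r\bigl(\pY(\pSig^*)^{-1/2}\bigr)\geq 1-\alpha$ obtained from $\|\pDeY(\pSig^*)^{-1/2}\|\leq\alpha$, followed by the factorization $\pX(\pSig^*)^{\pm 1/2}=\pX\pY^\tp\,\pY(\pY^\tp\pY)^{-1}(\pSig^*)^{\pm 1/2}$ to convert the incoherence of $\pX\pY^\tp$ into the row-norm bounds. Your remark that the displays must be read for the aligned factors $\oX\,\oQ$, $\oY(\oQ)^{-\tp}$ is also the intended interpretation, since that is exactly how the bounds are invoked in the proofs of Propositions \ref{prop_curvature} and \ref{prop_smoothness}.
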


\begin{proof}[Proof of Proposition \ref{prop_curvature}]
For convenience, we denote $\pX = \oX{\ } \oQ$, $\pY= \oY (\oQ)^{-\tp}$ and $\pDeX = \pX - \pX^*$, $\pDeY = \pY - \pY^*$. We then have 
\begin{align*}
&\la (\oX\ \oQ - \pX^*)(\pSig^*)^{1\over2}, \nabla_{\pX}\mL_2(\oX, \oY)\left[(\oY)^\tp\oY\right]^{-1}\oQ(\pSig^*)^{1\over2} \ra  \\
=& \la \pDeX(\pSig^*)^{1\over2}, \frac{1}{p}\mP_{\Omega}(\pX\pY^\tp-\pM^*)\pY(\pY^\tp\pY)^{-1}(\pSig^*)^{1\over2} \ra \\
=& \la \pDeX (\pSig^*)^{1\over2}, (\frac{1}{p}\mP_{\Omega}-\mI)(\pX\pY^\tp-\pM^*)\pY(\pY^\tp\pY)^{-1}(\pSig^*)^{1\over2} \ra\\
&+\la \pDeX(\pSig^*)^{1\over2}, (\pX\pY^\tp-\pM^*)\pY(\pY^\tp\pY)^{-1}(\pSig^*)^{1\over2} \ra \\
=:& A_1 +A_2.
\end{align*}
Since 
\be
\pX\pY^\tp-\pM^*  =  \pDeX(\pY^*)^\tp + \pX\pDeY^\tp =  \pDeX(\pY^*)^\tp + \pX\pDeY^\tp, \label{qw}
\ee
 we break the term $A_1$ into three parts:
\begin{align*}
A_1 =& A_{1,1} + A_{1,2} + A_{1,3}\\
=&  \la \pDeX(\pSig^*)^{1\over2}, (\frac{1}{p}\mP_{\Omega}-\mI)( \pDeX(\pY^*)^\tp)\pDeY(\pY^\tp\pY)^{-1}(\pSig^*)^{1\over2} \ra \\
&+ \la \pDeX(\pSig^*)^{1\over2}, (\frac{1}{p}\mP_{\Omega}-\mI)( \pDeX(\pY^*)^\tp)\pY^*(\pY^\tp\pY)^{-1}(\pSig^*)^{1\over2} \ra \\
&+ \la \pDeX(\pSig^*)^{1\over2}, (\frac{1}{p}\mP_{\Omega}-\mI)(\pX\pDeY^\tp)\pY(\pY^\tp\pY)^{-1}(\pSig^*)^{1\over2} \ra.
\end{align*}

For the first one, we use the Lemma 12 in \cite{CLL2020} and the inequality $\Vert \pX\pY \Vert_F \leq \|\pX\| \cdot \|\pY\|_F$  to get
\begin{align*}
|A_{1,1}| 
\leq& \left\Vert \frac{1}{p}\pmb{G} - \pmb{E} \right\Vert \cdot  \Vert \pDeX(\pSig^*)^{1\over2} \Vert_{2,\infty} \Vert \pDeX(\pSig^*)^{1\over2} \Vert_F   \\
&\cdot \Vert\pY^*(\pSig^*)^{-{1\over2}} \Vert_{2,\infty} \Vert \pDeY(\pY^\tp\pY)^{-1}(\pSig^*)^{1\over2} \Vert_F\\
\leq& \frac{C_0\sqrt{n_1n_2}}{\sqrt{d_1\wedge d_2}} \Vert \pDeX(\pSig^*)^{1\over2} \Vert_{2,\infty} \Vert \pDeX(\pSig^*)^{1\over2} \Vert_F \Vert\pY^*(\pSig^*)^{-{1\over2}} \Vert_{2,\infty} \\
&\cdot  \Vert \pDeY(\pSig^*)^{-{1\over2}} \Vert_F \Vert (\pSig^*)^{1\over2}(\pY^\tp\pY)^{-1}(\pSig^*)^{1\over2} \Vert, 
\end{align*}
where the last inequality holds from $\Vert \frac{1}{p}\pmb{G} - \pmb{E} \Vert \leq \frac{C_0\sqrt{n_1 n_2}}{\sqrt{d_1\wedge d_2}}$ in Lemma \ref{l_10}.
Lemma \ref{lem:upperbound} and Assumption ${\rm \pmb{\mathscr{A}}_1}$ yields that 
\bea
|A_{1,1}| \leq \frac{2}{(1-\alpha)^3} \frac{C_0\mu r \kappa}{\sqrt{d_1\wedge d_2}} \Vert \pDeX(\pSig^*)^{{1\over2}} \Vert_F \Vert \pDeY(\pSig^*)^{{1\over2}} \Vert_F,
\eea
where we use the result $\Vert \pDeY(\pSig^*)^{-{1\over2}} \Vert_F \leq \frac{1}{\sigma_r^*}\Vert \pDeY(\pSig^*)^{{1\over2}} \Vert_F$.

For the second part, we use Lemma \ref{l_5} to bound 
\begin{align*}
|A_{1,2}| 
\leq& \tilde{\delta} \| \pDeX (\pY^*)^\tp \|_F \| \pDeX \pSig^* (\pY^\tp\pY)^{-1} (\pY^*)^\tp \|_F \\
\leq& \tilde{\delta} \| \pDeX (\pSig^*)^{1\over2} (\pV^*)^\tp \|_F   \| (\pSig^*)^{1\over2} (\pY^\tp\pY)^{-1} (\pSig^*)^{1\over2} (\pV^*)^\tp \|  
 \| \pDeX (\pSig^*)^{1\over2} \|_F \\
=& \tilde{\delta}\| (\pSig^*)^{1\over2} (\pY^\tp\pY)^{-1} (\pSig^*)^{1\over2} \|   \| \pDeX (\pSig^*)^{1\over2} \|_F^2,
\end{align*}
where the second inequality comes from $\pY^* = \pV^*(\pSig^*)^{1\over2}$ and $\|\pX\pY\|_F \leq \|\pX\| \|\pY\|_F$.
Lemma \ref{lem:upperbound} yields that
\bea
|A_{1,2}| \leq \frac{1}{(1-\alpha)^2} \sqrt{2\Big(\delta_d^2 + \frac{C_0^2\mu^2 r^2}{d_1\wedge d_2}\Big)} \| \pDeX (\pSig^*)^{1\over2} \|_F^2.
\eea

Similar to $|A_{1,1}|$, we can bound $A_{1,3}$ as
\begin{align*}
|A_{1,3}| 
\leq&  \frac{C_0\sqrt{n_1n_2}}{\sqrt{d_1\wedge d_2}} \cdot \| \pX (\pSig^*)^{-{1\over2}} \|_{2,\infty} \|\pDeX (\pSig^*)^{1\over2}\|_F \| \pY(\pY^\tp\pY)^{-1}(\pSig^*)^{1\over2} \|_{2,\infty} \|\pDeY(\pSig^*)^{1\over2}\|_F \\
\leq& \frac{C_0\sqrt{n_1n_2}}{\sqrt{d_1\wedge d_2}} \cdot \| \pX (\pSig^*)^{-{1\over2}} \|_{2,\infty} \|\pDeX(\pSig^*)^{1\over2}\|_F\\
&\cdot  \| (\pSig^*)^{1\over2}(\pY^\tp\pY)^{-1}(\pSig^*)^{1\over2} \|   \| \pY(\pSig^*)^{-{1\over2}}\|_{2,\infty}  \|\pDeY(\pSig^*)^{1\over2}\|_F \\
\leq& \frac{(1+\alpha)^2}{(1-\alpha)^4} \frac{C_0\mu r \kappa^2}{\sqrt{d_1\wedge d_2}} \|\pDeX(\pSig^*)^{1\over2}\|_F\|\pDeY(\pSig^*)^{1\over2}\|_F.
\end{align*}
Combining with these three parts and setting $C_1 = \frac{1}{(1-\alpha)^2} \sqrt{2\Big(\delta_d^2 + \frac{C_0^2\mu^2 r^2}{d_1\wedge d_2}\Big)}$ and $C_2 = \left[ 2 + \frac{(1+\alpha)^2\kappa}{1-\alpha}\right] \frac{C_0\mu r  \kappa}{(1-\alpha)^3\sqrt{d_1\wedge d_2}}$, we can see that 
\bea
A_1 &\geq& -C_1\|\pDeX(\pSig^*)^{1\over2}\|_F^2 -C_2 \|\pDeX(\pSig^*)^{1\over2}\|_F\|\pDeY(\pSig^*)^{1\over2}\|_F \\
&\geq& -(C_1 + \frac{C_2}{2}) \|\pDeX(\pSig^*)^{1\over2}\|_F^2 - \frac{C_2}{2} \|\pDeY(\pSig^*)^{1\over2}\|_F^2,
\eea
where the last inequality comes from $ab \leq\frac{a^2+b^2}{2}$.

Notice that 
\bea
A_2 = \| \pDeX(\pSig^*)^{1\over2} \|_F^2 + \la \pDeX(\pSig^*)^{1\over2}, \pX^* \pDeY^{\tp}\pY(\pY^{\tp}\pY)^{-1}(\pSig^*)^{1\over2} \ra,
\eea
because of  the equation \eqref{qw}. We then turn to estimate the latter one and we denote it as $A_{2,2}$ for short.

Utilizing the equalities $\pX^* = \pX - \pDeX$ and $\pX^\tp\pDeX \pSig^* = \pSig^* \pDeY^\tp\pY$ in Lemma 23 of \cite{TMC2021}, we can see
\bea
A_{2,2} &=& \la \pDeX(\pSig^*)^{1\over2}, (\pX - \pDeX) \pDeY^{\tp}\pY(\pY^{\tp}\pY)^{-1}(\pSig^*)^{1\over2} \ra \\
&=& \la \pX^\tp\pDeX\pSig^*,   \pDeY^{\tp}\pY(\pY^{\tp}\pY)^{-1} \ra 
- \la \pDeX\pSig^*,  \pDeX \pDeY^{\tp}\pY(\pY^{\tp}\pY)^{-1} \ra \\
&=& \la \pY^\tp \pDeY \pSig^* \pDeY^\tp\pY,   (\pY^{\tp}\pY)^{-1} \ra 
- \la \pDeX\pSig^*,  \pDeX \pDeY^{\tp}\pY(\pY^{\tp}\pY)^{-1} \ra.
\eea
Since $ \la \pY^\tp \pDeY \pSig^* \pDeY^\tp\pY,   (\pY^{\tp}\pY)^{-1} \ra$ is the inner product of two PSD matrices, we have $ \la \pY^\tp \pDeY \pSig^* \pDeY^\tp\pY,   (\pY^{\tp}\pY)^{-1} \ra \geq 0$. Then it arrives at
\bea
A_{2,2} \geq - \la \pDeX\pSig^*,  \pDeX \pDeY^{\tp}\pY(\pY^{\tp}\pY)^{-1} \ra.
\eea
Lemma \ref{lem:upperbound} yields that
\begin{align*}
\la \pDeX\pSig^*,  \pDeX \pDeY^{\tp}\pY(\pY^{\tp}\pY)^{-1} \ra 
 =& \la  (\pSig^*)^{1\over2}\pDeX^\tp \pDeX (\pSig^*)^{1\over2}, (\pSig^*)^{-{1\over2}} \pDeY^{\tp}\pY(\pY^{\tp}\pY)^{-1} (\pSig^*)^{1\over2} \ra \\
\leq& \| (\pSig^*)^{-{1\over2}} \pDeY^{\tp}\pY(\pY^{\tp}\pY)^{-1} (\pSig^*)^{1\over2} \| \cdot  \| (\pSig^*)^{1\over2}\pDeX^\tp \pDeX (\pSig^*)^{1\over2} \|_F \\
\leq& \|  \pDeY (\pSig^*)^{-{1\over2}} \| \cdot \|\pY(\pY^{\tp}\pY)^{-1} (\pSig^*)^{1\over2} \| \cdot  \| \pDeX (\pSig^*)^{1\over2} \|_F^2 \\
\leq& \frac{\alpha}{1-\alpha} \| \pDeX (\pSig^*)^{1\over2} \|_F^2,
\end{align*}
which implies
\bea
A_{2,2} \geq -\frac{\alpha}{1-\alpha} \| \pDeX (\pSig^*)^{1\over2} \|_F^2.
\eea
Combining with the bounds of $A_1$ and $A_2$, we have
\begin{align*}
&\la (\oX\ \oQ - \pX^*)(\pSig^*)^{1\over2}, \nabla_{\pX}\mL_2(\oX, \oY)[(\oY)^\tp\oY]^{-1}\oQ(\pSig^*)^{1\over2} \ra \\
\geq& \Big(1-C_1-\frac{C_2}{2} - \frac{\alpha}{1-\alpha}\Big) \| \pDeX (\pSig^*)^{1\over2} \|_F^2 
- \frac{C_2}{2} \| \pDeY (\pSig^*)^{1\over2} \|_F^2.
\end{align*}
When setting $\alpha =0.1$, $\delta_d = \frac{1}{64}$ and $d_1\wedge d_2 \geq 100^2C_0^2\mu^2 r^2 \kappa^4$, we can obtain $C_1 \leq 0.033$, $C_2 \leq 0.046$. Furthermore, we get
\bea
\la (\oX\ \oQ - \pX^*)(\pSig^*)^{1\over2}, \nabla_{\pX}\mL_2(\oX, \oY)[(\oY)^\tp\oY]^{-1}\oQ(\pSig^*)^{1\over2} \ra \\
\geq 0.833 \| \pDeX (\pSig^*)^{1\over2} \|_F^2 - 0.023\| \pDeY (\pSig^*)^{1\over2} \|_F^2.
\eea
Based on the symmetry property of $\pX$ and $\pY$, we also get
\bea
\la (\oY(\oQ)^{-\tp} - \pY^*)(\pSig^*)^{1\over2}, \nabla_{\pY}\mL_2(\oX, \oY)[(\oX)^\tp\oX]^{-1}(\oQ)^{-\tp}(\pSig^*)^{1\over2} \ra\\
 \geq 0.833 \| \pDeY (\pSig^*)^{1\over2} \|_F^2 - 0.023\| \pDeX (\pSig^*)^{1\over2} \|_F^2.
\eea
\end{proof}

\begin{proof}[Proof of Proposition \ref{prop_smoothness}]
We  denote $\pX = \oX{\ } \oQ$, $\pY= \oY(\oQ)^{-\tp}$ and $\pDeX = \pX - \pX^*$, $\pDeY = \pY - \pY^*$ for convenience. For $\|\nabla_{\pX}\mL_2(\oX, \oY)[(\oY)^\tp\oY]^{-1}\oQ(\pSig^*)^{1\over2}\|_F^2$, 
we write
\begin{align*}
&\| \frac{1}{p}\mP_{\Omega}(\pX\pY^\tp-\pM^*)\pY(\pY^\tp\pY)^{-1}(\pSig^*)^{1\over2} \|_F  \\
\leq& \| (\frac{1}{p}\mP_{\Omega}-\mI)(\pX\pY^\tp-\pM^*)\pY(\pY^\tp\pY)^{-1}(\pSig^*)^{1\over2} \|_F 
+ \| (\pX\pY^\tp-\pM^*)\pY(\pY^\tp\pY)^{-1}(\pSig^*)^{1\over2} \|_F\\
=:& B_1+B_2.
\end{align*}
Recalling that $\| \pmb{A} \|_F = \max_{\|\pmb{B}\|_F=1} \langle \pmb{A}, \pmb{B}\rangle$, there exists a matrix $\widetilde{\pmb{R}} \in \mbR^{n_1\times r}$ with $\| \widetilde{\pmb{R}} \|_F=1$ satisfying
\begin{align*}
& \| (\frac{1}{p}\mP_{\Omega}-\mI)(\pDeX(\pY^*)^\tp + \pX\pDeY^\tp)\pY(\pY^\tp\pY)^{-1}(\pSig^*)^{1\over2} \|_F \\
\leq& |\la (\frac{1}{p}\mP_{\Omega}-\mI)(\pDeX(\pY^*)^\tp + \pX\pDeY^\tp)\pY(\pY^\tp\pY)^{-1}(\pSig^*)^{1\over2} , \widetilde{\pmb{R}} \ra| \\
\leq& |\la (\frac{1}{p}\mP_{\Omega}-\mI)(\pDeX(\pY^*)^\tp), \widetilde{\pmb{R}}(\pSig^*)^{1\over2}(\pY^\tp\pY)^{-1}(\pY^*)^\tp \ra| \\
&+ |\la (\frac{1}{p}\mP_{\Omega}-\mI)(\pDeX(\pY^*)^\tp), \widetilde{\pmb{R}}(\pSig^*)^{1\over2}(\pY^\tp\pY)^{-1}\pDeY^\tp \ra| \\
&+ |\la (\frac{1}{p}\mP_{\Omega}-\mI)(\pX\pDeY^\tp), \widetilde{\pmb{R}}(\pSig^*)^{1\over2}(\pY^\tp\pY)^{-1}\pY^\tp \ra| \\
=:& B_{1,1} + B_{1,2} + B_{1,3},
\end{align*}
where we use the equation \eqref{qw}.

Invoke Lemmas \ref{l_5} and \ref{lem:upperbound} to obtain
\begin{align*}
B_{1,1} \leq& \tilde{\delta} \| \pDeX(\pY^*)^\tp \|_F \cdot \| \widetilde{\pmb{R}}(\pSig^*)^{1\over2}(\pY^\tp\pY)^{-1}(\pY^*)^\tp \|_F \\
\leq& \tilde{\delta}  \| \pDeX(\pSig^*)^{1\over2} \|_F  \| \widetilde{\pmb{R}}\|_F
 \| (\pSig^*)^{1\over2}(\pY^\tp\pY)^{-1}(\pSig^*)^{1\over2}\|  \|\pV^* \|_F ^2\\
\leq& \frac{1}{(1-\alpha)^2} \sqrt{2(\delta_d^2 + \frac{C_0^2\mu^2 r^2}{d_1\wedge d_2})} \| \pDeX(\pSig^*)^{1\over2} \|_F.
\end{align*}
Similar to the estimation of $A_{1,1}$, we can get
\begin{align*}
B_{1,2} \leq& \frac{C_0\sqrt{n_1n_2}}{\sqrt{d_1\wedge d_2}}  \| \pDeX (\pSig^*)^{1\over2}\|_{2,\infty}  \| \widetilde{\pmb{R}}\|_F \| \pY^*(\pSig^*)^{-{1\over2}}\|_{2,\infty} 
 \| (\pSig^*)^{1\over2}(\pY^\tp\pY)^{-1}(\pSig^*)^{1\over2} \| \| \pDeY(\pSig^*)^{-{1\over2}}\|_F \\
\leq& \frac{2}{(1-\alpha)^3} \frac{C_0\mu r \kappa}{\sqrt{d_1\wedge d_2}}  \| \pDeY(\pSig^*)^{1\over2} \|_F
\end{align*}
and 
\begin{align*}
B_{1,3} \leq& \frac{C_0\sqrt{n_1n_2}}{\sqrt{d_1\wedge d_2}}  \| \pX (\pSig^*)^{-{1\over2}}\|_F  \| \widetilde{\pmb{R}}\|_F \| \pDeY(\pSig^*)^{{1\over2}}\|_{2,\infty} 
 \| (\pSig^*)^{1\over2}(\pY^\tp\pY)^{-1} (\pSig^*)^{1\over2} \| \| \pY(\pSig^*)^{-{1\over2}}\|_{2,\infty} \\ 
\leq& \frac{(1+\alpha)^2}{(1-\alpha)^4} \frac{C_0\mu r \kappa^2}{\sqrt{d_1\wedge d_2}}  \| \pDeY(\pSig^*)^{1\over2} \|_F.
\end{align*}
Put all together to get
\begin{align*}
B_1 \leq C_1 \| \pDeX(\pSig^*)^{1\over2} \|_F + C_2\| \pDeY(\pSig^*)^{1\over2} \|_F,
\end{align*}
where the constants are the same ones in the setting of Proposition \ref{prop_curvature}.
Utilizing Lemma 26 in \cite{TMC2021}, we can see that
\begin{align*}
B_2 \leq& \| \pY(\pY^\tp\pY)^{-1}(\pSig^*)^{1\over2} \| \cdot \| \pX\pY^\tp-\pM^* \|_F \\
\leq& \frac{1}{1-\alpha} \left[1+ \frac{1}{2} \left(\| \pDeX(\pSig^*)^{-{1\over2}} \| \vee \| \pDeY(\pSig^*)^{-{1\over2}} \| \right) \right] 
\left[ \| \pDeX (\pSig^*)^{{1\over2}} \|_F + \| \pDeY(\pSig^*)^{{1\over2}} \|_F\right]  \\
\leq& \left(1+\frac{\alpha}{2}\right) \frac{1}{1-\alpha}  \left[\| \pDeX(\pSig^*)^{{1\over2}} \|_F + \| \pDeY(\pSig^*)^{{1\over2}} \|_F\right].
\end{align*}
Thus, we can get
\begin{align*}
\|\nabla_{\pX}\mL_2(\oX, \oY)[(\oY)^\tp\oY]^{-1}\oQ(\pSig^*)^{1\over2}\|_F^2 \leq& 2(B_1^2 +B_2^2) \\
\leq& \left[2C_1(C_1+C_2)+\left(\frac{2+\alpha}{1-\alpha}\right)^2 \right] \| \pDeX(\pSig^*)^{{1\over2}} \|_F^2 \\
&+ \left[2C_2(C_1+C_2)+\left(\frac{2+\alpha}{1-\alpha}\right)^2 \right] \| \pDeY(\pSig^*)^{{1\over2}} \|_F^2. 
\end{align*}
When setting $\alpha = 0.1$, $\delta_d = \frac{1}{64}$ and $d_1\wedge d_2 \geq 100^2C_0^2\mu^2 r^2 \kappa^4$, we can obtain $C_1 \leq 0.033$, $C_2 \leq 0.046$. Furthermore, we get
\begin{align*}
&\|\nabla_{\pX}\mL_2(\oX, \oY)[(\oY)^\tp\oY]^{-1}\oQ(\pSig^*)^{1\over2}\|_F^2\\
 \leq& 2(B_1^2 +B_2^2) \leq 5.5 \left[\| \pDeX(\pSig^*)^{{1\over2}} \|_F^2 + \| \pDeY(\pSig^*)^{{1\over2}} \|_F^2\right].
\end{align*}
Based on the symmetry property of $\pX$ and $\pY$, we also get
\bea
&&\|\nabla_{\pY}\mL_2(\oX, \oY)[(\oX)^\tp\oX]^{-1}(\oQ)^{-\tp}(\pSig^*)^{1\over2}\|_F^2\\
& \leq &2(B_1^2 +B_2^2) \leq 5.5 \left[ \| \pDeX(\pSig^*)^{{1\over2}} \|_F^2 + \| \pDeY(\pSig^*)^{{1\over2}} \|_F^2\right].
\eea
\end{proof}

\subsection{Initialization Behaves Well}
\begin{proof}[Proof of Theorem \ref{thm:initialization2}]
Lemma 24 in \cite{TMC2021} evokes
\bea
\dist_*(\oZ{}^0, \pZ^*) \leq (\sqrt{2}+1)^{1\over 2} \left\Vert \oU{}^0 \oSig{}^0 (\oV{}^0)^\tp - \pM^*  \right\Vert_F.
\eea
Based on Lemma \ref{l_1}, we have 
\begin{align}
\dist_*(\oZ{}^0, \pZ^*) 
  \overset{(i)}{\leq}& (\sqrt{2}+1)^{1\over 2} \sqrt{2r}\Vert \oU{}^0\oSig{}^0 (\oV{}^0)^\tp - \pmb{M}^*\Vert\nonumber\\
\leq& (\sqrt{2}+1)^{1\over 2} \sqrt{2r}\cdot\frac{2C_0\mu r}{\sqrt{d_1\wedge d_2}}\Vert \pmb{M}^*\Vert \nonumber \\
\leq& \frac{2\sqrt{5}C_0\mu r^{1.5}}{\sqrt{d_1\wedge d_2}}\Vert \pmb{M}^*\Vert. \label{prf1_thm:initialization}
\end{align}
Here, the inequality (i) holds because of Cauchy-Schwarz inequality and $\text{rank}(\oU{}^0\oSig{}^0 (\oV{}^0)^\tp - \pmb{M}^*) \leq \text{rank}(\oU{}^0\oSig{}^0 (\oV{}^0)^\tp) + \text{rank}(\pmb{M}^*) \leq 2r$. 
When $d_1 \wedge d_2 \geq 2000C_0^2\mu^2\kappa^2r^3$, we can get
\be
\dist_*(\oZ{}^0, \pZ^*)  \leq  \frac{2\sqrt{5}C_0 \mu \kappa r^{1.5}}{\sqrt{d_1\wedge d_2}}  \cdot\sigma_r^* \leq 0.1 \sigma_r^*, \label{prf3_thm:initialization}
\ee
which completes the proof.

\end{proof}

\subsection{Convergent Analysis}

\begin{proof}[Proof of Theorem \ref{thm_scaledPGD}]
Based on Proposition \ref{prop3} and Lemma \ref{non_expan} with the restriction of stepsize $\eta \leq 0.145$, the $k$-th step of Algorithm \ref{alg:scaledPGD} satisfies
\bea
\dist_{*}^2(\pZ^{k}, \pZ^*) &\leq& (1-1.6\eta + 11\eta^2) \dist_{*}^2(\pZ^{k-1}, \pZ^*) \\
&\leq& (1-1.6\eta + 11\eta^2)^{k} \dist_{*}^2(\pZ^{0}, \pZ^*) \\
&\leq& \frac{1}{100} (1-1.6\eta + 11\eta^2)^{k}  (\sigma_r^*)^2,
\eea
where the last inequality holds based on Theorem \ref{thm:initialization2}. Moreover, we have
\bea
\Vert \pX^{k}(\pY^{k})^\tp - \pM^* \Vert_F \leq \frac{3}{20} (1-1.6\eta + 11\eta^2)^{\frac{k}{2}}  \sigma_r^*.
\eea

\end{proof}

\section{Numerical Experiments} \label{sec4}

\begin{figure}[!h]
\vspace{-0.3cm}  

\setlength{\abovecaptionskip}{0.2cm}   

\setlength{\belowcaptionskip}{0.2cm}   
  \centering
  \includegraphics[scale=0.4]{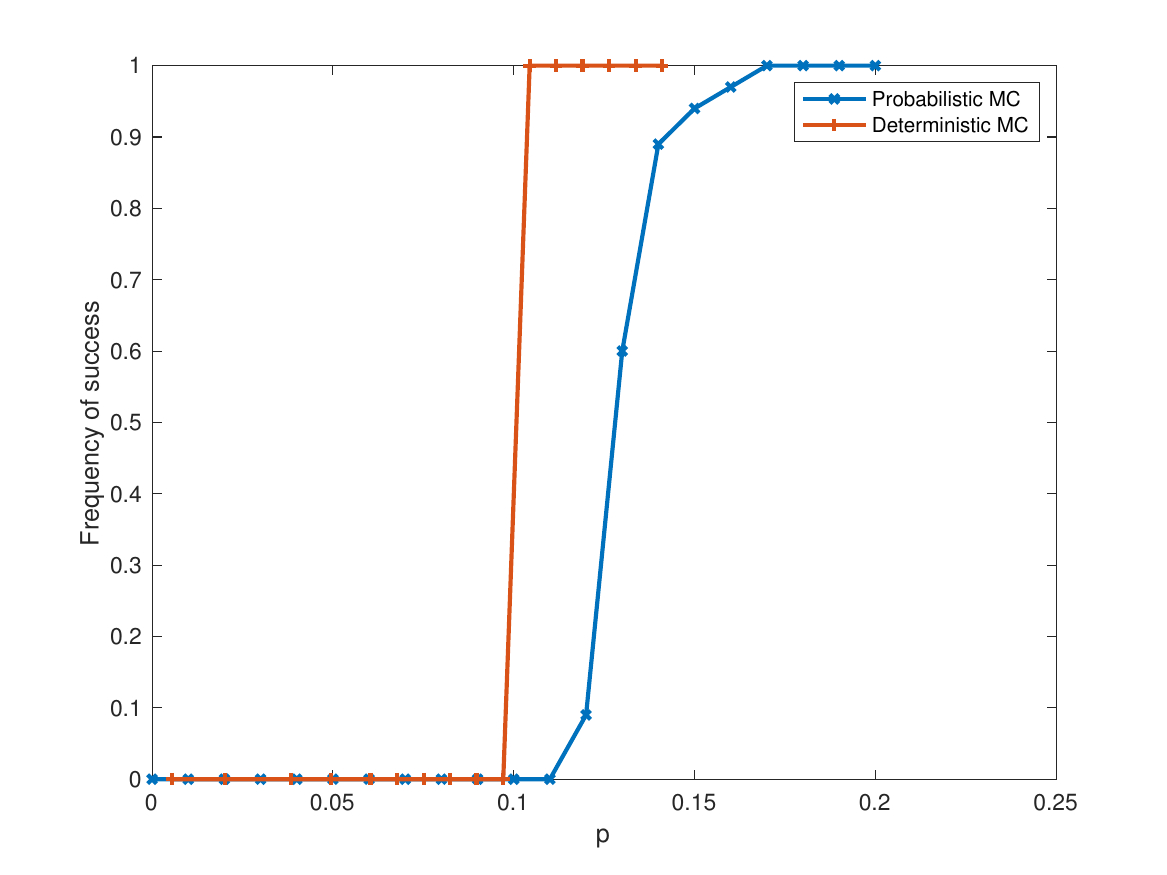}
  \caption{Frequency of success for probabilistic and deterministic matrix completion.}
\label{Fig1}
\end{figure}

We randomly generate a ground-truth $\pM^*$ of size $1092\times 1092$ and rank $3$. It is constructed from two random $1092\times 3$ matrices with i.i.d. normal entries. We compare the frequency of successful recovery for probabilistic and deterministic matrix completion by PGD in Figure \ref{Fig1}. Here, we set $\mu=1.1$ and step-size $\eta$ is tuned. 
And the Ramanujan graph is constructed by LPS construction presented in \cite{lubotzky1988ramanujan} and the MATLAB code for constructing the Ramanujan graph comes from Dr. S. Burnwal and Prof. M. Vidyasagar\footnote{Personal communication.}.
If the relative error $\| \hat{\pM} - \pM^* \|_F/\| \pM^*\|_F$ is less than $10^{-6}$, we say that the recovery is successful in a trial. We average over 100 such trials to determine the success ratio. Figure \ref{Fig1} shows the improvement of deterministic method over probabilistic method for matrix completion. We can see that the deterministic method needs smaller    sample rate $p$ than probabilistic method to achieve success ratio 1 and deterministic method removes the randomness in the recovery.
Thus, we can witness the advantages of deterministic recovery over probabilistic ones.

We use the Inexact Augmented Lagrangian Method (IALM) to solve NNM which is presented in \cite{lin2010augmented}. A brief comparison of the three algorithms is presented in Table \ref{tab1}. We set the tolerated relative error bound $\epsilon < 10^{-4}$ and record the number of iteration together with the running time by averaging 20 trials. One can see that PGD and Scaled PGD are always more efficient than ILAM in the large-scale cases. 
It is worth mentioning that 
we strictly follow the iterates in Algorithm \ref{alg:pgd} to test the running time of PGD. It leads to the major difference of running time between PGD and Scaled PGD since the term $\pmb{D}\pmb{Z}\pmb{Z}^\tp \pmb{D}\pmb{Z}$ in the gradient of $ \mL_1(\pmb{Z})$ in \eqref{lo2plus} by matrix lifting is more computationally expensive while the loss function $\mL_2(\pmb{Z})$ has no regularizer. For further comparison, we use $\left[                 
  \begin{array}{ccc}   
   \pmb{X}(\pX^\tp\pX-\pY^\tp\pY)\\  
    \pmb{Y}(\pY^\tp\pY-\pX^\tp\pX)\\  
  \end{array}
\right]$ instead of $\pmb{D}\pmb{Z}\pmb{Z}^\tp \pmb{D}\pmb{Z}$ in Table \ref{tab2}.

\begin{table}[!ht]

\begin{center}

\caption{Comparison between IALM, PGD and Scaled PGD on the deterministic matrix completion problem under the tolerated relative error bound $\epsilon < 10^{-4}$.}
\label{tab1}
\begin{tabular}{|c|c|c|c|c|c|} \hline
algorithm & $\text{rank}(\pM^*)$ & $p$& \#iter & time (s) & $\frac{\| \hat{\pM} - \pM^* \|_F}{\| \pM^*\|_F}$ \\ \hline
IALM & 2 & 158/2448 & 221 & 1045.43 &8.68e-5 \\ \hline
PGD & 2 & 158/2448 & 140 & 407.89 & 9.96e-5 \\ \hline
Scaled PGD & 2 & 158/2448 & 65 & 9.85 & 9.98e-5 \\ \hline
IALM & 3 & 158/2448 & 211 & 1105.24 &8.87e-5 \\ \hline
PGD & 3 & 158/2448 & 155 & 444.47 & 9.64e-5 \\ \hline
Scaled PGD & 3 & 158/2448 & 72 & 10.66 & 9.79e-5 \\ \hline
IALM & 2 & 298/2448 & 124 & 599.33 &7.69e-5 \\ \hline
PGD & 2 & 298/2448 & 119 & 352.44 & 9.50e-5 \\ \hline
Scaled PGD & 2 & 298/2448 & 56 & 8.89 & 9.35e-5 \\ \hline
IALM & 3 & 298/2448 & 124 & 625.55 &7.62e-5 \\ \hline
PGD & 3 & 298/2448 & 123 & 365.81 & 9.96e-5 \\ \hline
Scaled PGD & 3 & 298/2448 & 58 & 9.19 & 9.35e-5 \\ \hline
\end{tabular}

\end{center}
\end{table}

\begin{table}[!ht]

\begin{center}

\caption{Comparison between IALM, PGD and Scaled PGD on the deterministic matrix completion problem under the tolerated relative error bound $\epsilon < 10^{-4}$.}
\label{tab2}
\begin{tabular}{|c|c|c|c|c|c|} \hline
algorithm & $\text{rank}(\pM^*)$ & $p$& \#iter & time (s) & $\frac{\| \hat{\pM} - \pM^* \|_F}{\| \pM^*\|_F}$ \\ \hline
IALM & 2 & 158/2448 & 221 & 1045.43 &8.68e-5 \\ \hline
PGD & 2 & 158/2448 & 140 & 15.47 & 9.96e-5 \\ \hline
Scaled PGD & 2 & 158/2448 & 65 & 9.85 & 9.98e-5 \\ \hline
IALM & 3 & 158/2448 & 211 & 1105.24 &8.87e-5 \\ \hline
PGD & 3 & 158/2448 & 155 & 16.67 & 9.64e-5 \\ \hline
Scaled PGD & 3 & 158/2448 & 72 & 10.66 & 9.79e-5 \\ \hline
IALM & 2 & 298/2448 & 124 & 599.33 &7.69e-5 \\ \hline
PGD & 2 & 298/2448 & 119 & 13.34 & 9.50e-5 \\ \hline
Scaled PGD & 2 & 298/2448 & 56 & 8.89 & 9.35e-5 \\ \hline
IALM & 3 & 298/2448 & 124 & 625.55 &7.62e-5 \\ \hline
PGD & 3 & 298/2448 & 123 & 13.72 & 9.96e-5 \\ \hline
Scaled PGD & 3 & 298/2448 & 58 & 9.19 & 9.35e-5 \\ \hline
\end{tabular}

\end{center}
\end{table}

\begin{figure}[!t]
\vspace{-0.3cm}  

\setlength{\abovecaptionskip}{0.2cm}   

\setlength{\belowcaptionskip}{0.2cm}   
\centering 

\begin{minipage}[b]{0.45\textwidth} 
\centering 
\includegraphics[width=1\textwidth]{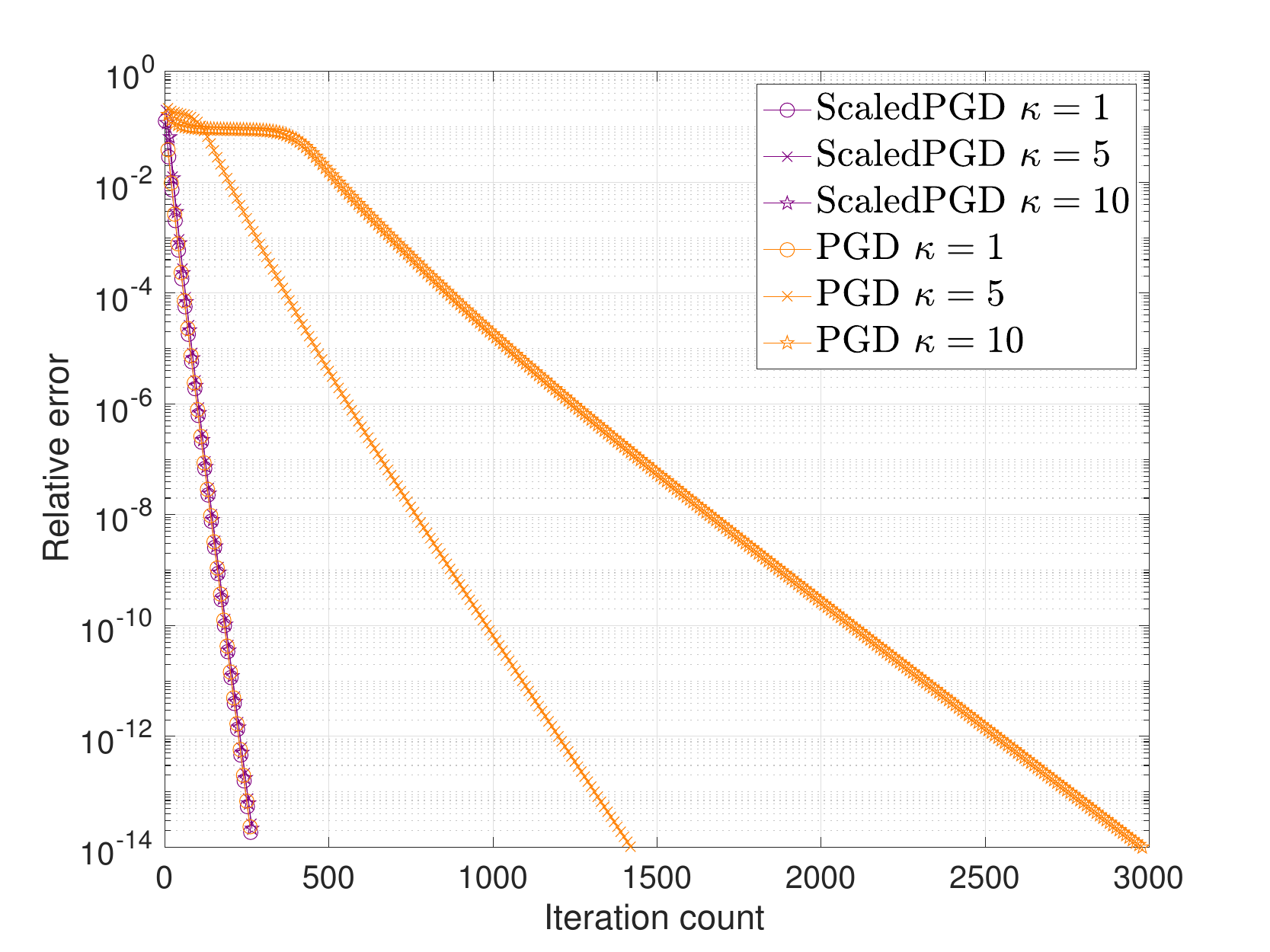} 
$r=3$.
\end{minipage}
\begin{minipage}[b]{0.45\textwidth} 
\centering 
\includegraphics[width=1\textwidth]{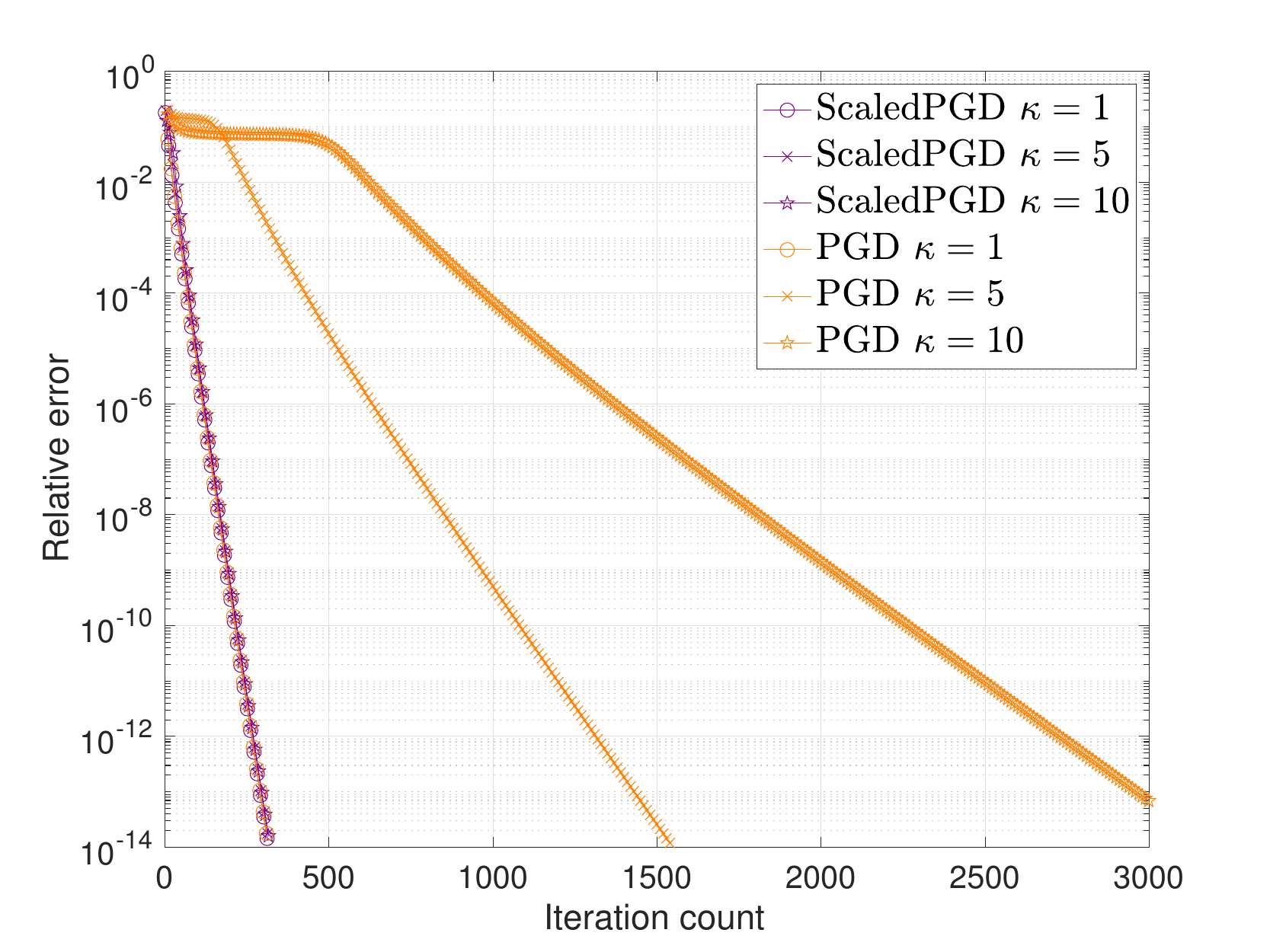}
$r=5$.
\end{minipage}
\caption{The relative errors of PGD and Scaled PGD with $r=3, 5$ under different condition numbers $\kappa = 1,5,10$ for deterministic matrix completion.}
\label{Fig2}
\end{figure}

We next illustrate the convergence rate of PGD and Scaled PGD. Figure \ref{Fig2} plots the tolerated relative errors of two algorithms as iteration count varies. When the condition number $\kappa =1$, the convergence rate of Scaled PGD is same as ones of PGD. And we can see the convergence rate of Scaled PGD is independent of the condition number $\kappa$ while convergence rate of PGD slows down along with $\kappa$ growing. It is consistent with our theoretical results.



\bibliographystyle{IEEEtran}
\bibliography{reference}

\newpage

\section*{Appendices}
\section*{A. Convergent Analysis of PGD} 

For the convenience of the proof, we set $n_1=n_2=n$ and $d_1=d_2=d$ in this part. The proof is similar to \cite{T2}.
Here, we set $T$ to be the subspace of $\mathbb{R}^{n_1\times n_2}$ which is spanned by elements of the form $\pmb{U}^*\pmb{X}^\tp$ and $\pmb{Y}(\pmb{V}^*)^\tp$, that is, 
\bea
T= \{ \pmb{U}^*\pmb{X}^\tp + \pmb{Y}(\pmb{V}^*)^\tp | \pX \in \mathbb{R}^{n_1\times r},  \pY \in \mathbb{R}^{n_2\times r}\}. \label{def_T}
\eea
Based on the definition of $\underline{\Omega}$, we can rewrite the objective function:
\begin{eqnarray}
 \mL_1(\pmb{Z})=\frac{1}{2p}\sum_{l=1}^{2m}\left( \langle \pmb{A}_l, \pmb{Z}\pmb{Z}^\tp \rangle - \pmb{b}_l\right)^2+ \frac{\lambda}{4}\Vert \pmb{Z}^\tp \pmb{D} \pmb{Z}\Vert_F^2, \label{lo3}
\end{eqnarray}
where $m = n_1 d_1 = n_2  d_2$, $\pmb{b}_l = \langle \pmb{A}_l, \pmb{Z}^*(\pmb{Z}^*)^\tp \rangle$ and $\pmb{A}_l$ is a matrix with 1 at one observed entry and 0 elsewhere. Hence, the gradient can be given as
\begin{align}
\nabla  \mL_1(\pmb{Z})=&\frac{1}{p}\sum_{l=1}^{2m}(\langle \pmb{A}_l,\pmb{Z}\pmb{Z}^\tp\rangle -\pmb{b}_l)\cdot(\pmb{A}_l+\pmb{A}_l^\tp)\pmb{Z} 
+\lambda \pmb{D}\pmb{Z}\pmb{Z}^\tp \pmb{D}\pmb{Z} \label{lo4}.
\end{align}

Let $\overline{\pmb{Z}} =\arg\min_{\widetilde{\pmb{Z}} \in \mathcal{S}}\Vert \pmb{Z} - \widetilde{\pmb{Z}} \Vert_F$ and $\pmb{H} = \pmb{Z} - \overline{\pmb{Z}}$. Hence, the gradient (\ref{lo4}) can be written as 
\begin{align}
\nabla  \mL_1(\pmb{Z}
= \frac{1}{p}\sum_{l=1}^{2m}(\langle \pmb{A}_l,\pmb{H}(\overline{\pmb{Z}})^\tp+\overline{\pmb{Z}}\pmb{H}^\tp+\pmb{H}\pmb{H}^\tp\rangle)(\pmb{A}_l+\pmb{A}_l^\tp)(\overline{\pmb{Z}}+\pmb{H})
+\lambda \pmb{D}\pmb{Z}\pmb{Z}^\tp \pmb{D}\pmb{Z}. \label{lo5}
\end{align}


\subsection*{A.1 Initialization Behaves Well}
Firstly, we give a result about our spectral initialization. It can be  proved that in some condition, $\pmb{Z}^0$ is within a small neighborhood of $\pmb{Z}^*$.
\begin{theorem}\label{1}
There exists a constant $C$, so that if the sample size $p \geq \frac{128}{\sqrt{2}-1}C_0^2\mu^2\kappa^2r^3/(n_1 \wedge n_2)$, then our initialization in Algorithm \ref{alg:pgd} satisfies
\begin{eqnarray*}
\dist(\pmb{Z}^1,\pmb{Z}^*) \leq \dist(\pmb{Z}^0,\pmb{Z}^*) \leq \frac{1}{4}\sqrt{\sigma_r^*}.
\end{eqnarray*}
\end{theorem}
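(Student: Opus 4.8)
The plan is to bound $\dist(\pZ^0,\pZ^*)$ directly from the operator-norm accuracy of the spectral estimate, and then transfer that bound to $\pZ^1=\mathcal{P}_{\mathcal{C}_1}(\pZ^0)$ using non-expansiveness of the coordinate-wise projection. First I would invoke the standard metric comparison (see \cite[Lemma 5.4]{pmlr-v48-tu16}): for any $\pZ$ and the rank-$r$ target $\pZ^*$,
\[
\dist^2(\pZ,\pZ^*)\;\le\;\frac{1}{2(\sqrt2-1)\,\sigma_r\!\big(\pZ^*(\pZ^*)^\tp\big)}\,\big\|\pZ\pZ^\tp-\pZ^*(\pZ^*)^\tp\big\|_F^2 .
\]
Because $(\pZ^*)^\tp\pZ^*=(\pSig^*)^{1/2}\big((\pU^*)^\tp\pU^*+(\pV^*)^\tp\pV^*\big)(\pSig^*)^{1/2}=2\pSig^*$, one has $\sigma_r(\pZ^*(\pZ^*)^\tp)=2\sigma_r^*$; applying the inequality with $\pZ=\pZ^0=\oZ^0$ and then Lemma \ref{l_2} yields $\dist^2(\pZ^0,\pZ^*)\le\frac{1}{(\sqrt2-1)\sigma_r^*}\big\|\pU^0\pSig^0(\pV^0)^\tp-\pM^*\big\|_F^2$.

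Next I would control the spectral error. Since $\pU^0\pSig^0(\pV^0)^\tp-\pM^*$ has rank at most $2r$, its Frobenius norm is at most $\sqrt{2r}$ times its operator norm, which Lemma \ref{l_1} bounds by $\tfrac{2C_0\mu r}{\sqrt d}\sigma_1^*$ (here $d=d_1=d_2$, so $d=pn$ under the Appendix's convention $n_1=n_2=n$). Chaining the estimates gives $\dist^2(\pZ^0,\pZ^*)\le\frac{8C_0^2\mu^2 r^3\kappa^2}{(\sqrt2-1)\,d}\,\sigma_r^*$, and the hypothesis $d=pn\ge\frac{128}{\sqrt2-1}C_0^2\mu^2\kappa^2 r^3$ collapses this to $\dist^2(\pZ^0,\pZ^*)\le\tfrac{1}{16}\sigma_r^*$, i.e.\ $\dist(\pZ^0,\pZ^*)\le\tfrac14\sqrt{\sigma_r^*}$, which is the right-hand inequality.

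For the left-hand inequality I would use that $\mathcal{P}_{\mathcal{C}_1}$ is non-expansive toward any point of $\mathcal{C}_1$. Writing $\pR$ for a minimizer in the definition of $\dist(\pZ^0,\pZ^*)$, it suffices to check $\pZ^*\pR\in\mathcal{C}_1$, since then Lemma \ref{l_3}, applied row by row, gives
\[
\dist(\pZ^1,\pZ^*)\le\big\|\mathcal{P}_{\mathcal{C}_1}(\pZ^0)-\pZ^*\pR\big\|_F\le\big\|\pZ^0-\pZ^*\pR\big\|_F=\dist(\pZ^0,\pZ^*).
\]
To see $\pZ^*\pR\in\mathcal{C}_1$, note $\|(\pZ^*\pR)_i\|_2=\|\pZ^*_i\|_2\le\sqrt{\tfrac{\mu r}{n_1\wedge n_2}}\sqrt{\sigma_1^*}$ (split $\pZ^*$ into its $\pU^*(\pSig^*)^{1/2}$ and $\pV^*(\pSig^*)^{1/2}$ blocks and use ${\rm \pmb{\mathscr{A}}_1}$ with $\|(\pSig^*)^{1/2}\|=\sqrt{\sigma_1^*}$), while $\|\pZ^0\|^2=2\sigma_1(\pSig^0)\ge\sigma_1^*$ since Lemma \ref{l_1} forces $\sigma_1(\pSig^0)\ge(1-\tfrac{2C_0\mu r}{\sqrt d})\sigma_1^*\ge\tfrac12\sigma_1^*$; hence $\|\pZ^*_i\|_2\le\sqrt{\tfrac{2\mu r}{n_1\wedge n_2}}\|\pZ^0\|$, as required.

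I expect no serious obstacle: given Lemmas \ref{l_1}, \ref{l_2} and \ref{l_3} together with the metric comparison, the argument is a direct chain of inequalities. The only points that need care are the bookkeeping — carrying the factor $\sigma_r(\pZ^*(\pZ^*)^\tp)=2\sigma_r^*$ and the rank-$2r$ bound through consistently so the constants produce exactly the threshold $\frac{128}{\sqrt2-1}C_0^2\mu^2\kappa^2 r^3$ — and the verification that $\pZ^*\pR\in\mathcal{C}_1$, which hinges on the identity $\|\pZ^0\|^2=2\sigma_1(\pSig^0)$.
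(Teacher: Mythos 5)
Your proposal is correct and follows essentially the same route as the paper: the metric comparison lemma of Tu et al.\ with $\sigma_r(\pmb{Z}^*(\pmb{Z}^*)^\tp)=2\sigma_r^*$, Lemma \ref{l_2} to pass to $\pmb{U}^0\pmb{\Sigma}^0(\pmb{V}^0)^\tp-\pmb{M}^*$, the rank-$2r$ Frobenius-to-operator bound with Lemma \ref{l_1}, and Lemma \ref{l_3} for the non-expansiveness of the row-wise clipping. Your inline verification that $\pmb{Z}^*\pmb{R}\in\mathcal{C}_1$ via $\Vert\pmb{Z}^0\Vert^2=2\sigma_1(\pmb{\Sigma}^0)\geq\sigma_1^*$ is exactly the content of the paper's Corollary \ref{r_1} (proved there via Weyl's theorem), so the argument matches the paper's proof up to this minor reorganization.
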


\begin{corollary}\label{r_1}
The ground-truth $\pmb{Z}^*$ satisfies $\Vert \pmb{Z}^*\Vert_{2,\infty}\leq\sqrt{\frac{2\mu r}{n_1 \wedge n_2}}\Vert \pmb{Z}^0\Vert$. 
\end{corollary}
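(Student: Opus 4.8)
The claim is that $\Vert \pmb{Z}^*\Vert_{2,\infty}\leq\sqrt{\frac{2\mu r}{n_1 \wedge n_2}}\Vert \pmb{Z}^0\Vert$. The plan is to combine two facts: the incoherence of $\pmb{Z}^*$ coming from Assumption ${\rm \pmb{\mathscr{A}}_1}$, and the fact that $\Vert\pmb{Z}^0\Vert$ is close enough to $\Vert\pmb{Z}^*\Vert$ (a consequence of the spectral initialization being close to $\pmb{Z}^*$, via Theorem \ref{1}) that the factor of $2$ absorbs the discrepancy.

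First I would bound $\Vert\pmb{Z}^*\Vert_{2,\infty}$ directly. Recall $\pmb{Z}^* = \left[\begin{array}{c}\pmb{U}^*\\ \pmb{V}^*\end{array}\right]{(\pmb{\Sigma}^*)}^{1/2}$, so for a row in the top block, $\Vert(\pmb{Z}^*)_i\Vert_2 = \Vert\pmb{U}_i^*{(\pmb{\Sigma}^*)}^{1/2}\Vert_2 \leq \Vert\pmb{U}_i^*\Vert_2 \sqrt{\sigma_1^*} \leq \sqrt{\frac{\mu r}{n_1}}\sqrt{\sigma_1^*}$ by ${\rm \pmb{\mathscr{A}}_1}$, and similarly rows in the bottom block are bounded by $\sqrt{\frac{\mu r}{n_2}}\sqrt{\sigma_1^*}$. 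Hence $\Vert\pmb{Z}^*\Vert_{2,\infty} \leq \sqrt{\frac{\mu r}{n_1\wedge n_2}}\sqrt{\sigma_1^*} = \sqrt{\frac{\mu r}{n_1\wedge n_2}}\,\Vert\pmb{Z}^*\Vert$, using $\Vert\pmb{Z}^*\Vert^2 = \sigma_1(\pmb{Z}^*(\pmb{Z}^*)^\tp) = \sigma_1(\pmb{N}^*) = \sigma_1^*$.

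Next I would control $\Vert\pmb{Z}^*\Vert$ in terms of $\Vert\pmb{Z}^0\Vert$. By Theorem \ref{1} (whose hypothesis on $p$ is met under the standing assumptions), $\dist(\pmb{Z}^0,\pmb{Z}^*)\leq\frac14\sqrt{\sigma_r^*}$, so there is an orthogonal $\pmb{R}$ with $\Vert\pmb{Z}^0 - \pmb{Z}^*\pmb{R}\Vert_F\leq\frac14\sqrt{\sigma_r^*}$. Then by the triangle inequality for the operator norm, $\Vert\pmb{Z}^*\Vert = \Vert\pmb{Z}^*\pmb{R}\Vert \leq \Vert\pmb{Z}^0\Vert + \Vert\pmb{Z}^0 - \pmb{Z}^*\pmb{R}\Vert \leq \Vert\pmb{Z}^0\Vert + \frac14\sqrt{\sigma_r^*} \leq \Vert\pmb{Z}^0\Vert + \frac14\sqrt{\sigma_1^*}$. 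Since $\Vert\pmb{Z}^0\Vert \geq \Vert\pmb{Z}^*\Vert - \frac14\sqrt{\sigma_r^*} \geq \frac34\sqrt{\sigma_1^*}$ is comparable to $\sqrt{\sigma_1^*}$, one gets $\sqrt{\sigma_1^*} = \Vert\pmb{Z}^*\Vert \leq \sqrt{2}\,\Vert\pmb{Z}^0\Vert$ (indeed $\sqrt{\sigma_1^*}\le \Vert\pmb{Z}^0\Vert + \tfrac14\sqrt{\sigma_1^*}$ gives $\sqrt{\sigma_1^*}\le\tfrac43\Vert\pmb{Z}^0\Vert\le\sqrt2\,\Vert\pmb{Z}^0\Vert$). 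Plugging this into the previous display yields $\Vert\pmb{Z}^*\Vert_{2,\infty}\leq\sqrt{\frac{\mu r}{n_1\wedge n_2}}\cdot\sqrt{2}\,\Vert\pmb{Z}^0\Vert = \sqrt{\frac{2\mu r}{n_1\wedge n_2}}\,\Vert\pmb{Z}^0\Vert$, which is exactly the claim.

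The argument is essentially bookkeeping; there is no real obstacle beyond making sure the constant chase is clean. The one point to be slightly careful about is the relation $\Vert\pmb{Z}^*\Vert=\sqrt{\sigma_1^*}$ (and $\Vert\pmb{Z}^*\pmb{R}\Vert=\Vert\pmb{Z}^*\Vert$ for orthogonal $\pmb{R}$), and the fact that $\dist$ in Theorem \ref{1} is measured in Frobenius norm, so I pass to the operator norm via $\Vert\cdot\Vert\le\Vert\cdot\Vert_F$ before applying the triangle inequality. If a cleaner constant is wanted one can instead invoke Weyl's inequality on singular values of $\pmb{Z}^0$ vs $\pmb{Z}^*\pmb{R}$, but the crude bound above already suffices for the stated factor $2$.
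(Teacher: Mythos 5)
Your argument is correct in substance and rests on the same reduction as the paper's proof --- bound $\Vert\pZ^*\Vert_{2,\infty}$ by $\sqrt{\mu r/(n_1\wedge n_2)}\sqrt{\sigma_1^*}$ via ${\rm \pmb{\mathscr{A}}_1}$, then show $\sqrt{\sigma_1^*}\le\sqrt{2}\,\Vert\pZ^0\Vert$ --- but you reach the second step by a somewhat different route. The paper compares $\pmb{M}^0=\frac1p\mathcal{P}_\Omega(\pM^*)$ with $\pM^*$ using Lemma \ref{l_1} and Weyl's inequality to conclude $\vert\sigma_1(\pZ^0)-\sigma_1^*\vert\le\frac1{16}\sigma_r^*$, whereas you compare the lifted factors directly, combining the estimate $\dist(\pZ^0,\pZ^*)\le\frac14\sqrt{\sigma_r^*}$ with $\Vert\cdot\Vert\le\Vert\cdot\Vert_F$ and the triangle inequality. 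Both routes draw on the same initialization analysis and sample-size hypothesis, so neither is stronger; yours avoids Weyl's theorem, the paper's works at the level of $n_1\times n_2$ matrices. Two points should be tightened. First, your identity $\Vert\pZ^*\Vert^2=\sigma_1(\pmb{N}^*)=\sigma_1^*$ is incorrect: since $(\pZ^*)^\tp\pZ^*=(\pSig^*)^{1/2}\big[(\pU^*)^\tp\pU^*+(\pV^*)^\tp\pV^*\big](\pSig^*)^{1/2}=2\pSig^*$, one has $\Vert\pZ^*\Vert=\sqrt{2\sigma_1^*}$ and $\sigma_1(\pmb{N}^*)=2\sigma_1^*$. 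This does not invalidate the proof, because the only inequality you actually use downstream is $\sqrt{\sigma_1^*}\le\Vert\pZ^*\Vert$, which remains true (with extra slack), but the equality should be corrected and the incoherence bound kept in the form $\Vert\pZ^*\Vert_{2,\infty}\le\sqrt{\mu r/(n_1\wedge n_2)}\sqrt{\sigma_1^*}$ rather than routed through $\Vert\pZ^*\Vert$. Second, be explicit that you invoke only the intermediate bound \eqref{3p3} on $\dist(\pZ^0,\pZ^*)$, not Theorem \ref{1} as a whole: the final step of that theorem, $\dist(\pZ^1,\pZ^*)\le\dist(\pZ^0,\pZ^*)$, uses Lemma \ref{l_3} and needs precisely Corollary \ref{r_1} to ensure the rows of $\overline{\pZ}{}^0$ lie within the clipping radius, so citing the full theorem would be circular. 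Since you only use the $\pZ^0$ part, the logic is sound once this dependence is stated.
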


\begin{proof}[Proof of Theorem \ref{1}]
Based on Lemma \ref{l_1} and Lemma \ref{l_2}, we have 
\begin{align}
\Vert \pmb{Z}^0(\pmb{Z}^0)^\tp - \pmb{Z}^*(\pmb{Z}^*)^\tp\Vert_F 
 \leq& 2 \Vert \pmb{U}^0\pmb{\Sigma}^0 (\pmb{V}^0)^\tp - \pmb{U}^*\pmb{\Sigma}^* (\pmb{V}^*)^\tp\Vert_F
\overset{(i)}{\leq} 2\sqrt{2r}\Vert \pmb{U}^0\pmb{\Sigma}^0 (\pmb{V}^0)^\tp - \pmb{M}^*\Vert\nonumber\\
\leq& 2\sqrt{2r}\cdot\frac{2C_0\mu r}{\sqrt{d}}\Vert \pmb{M}^*\Vert
= \frac{4\sqrt{2}C_0\mu r^{1.5}}{\sqrt{d}}\Vert \pmb{M}^*\Vert. \label{3p1}
\end{align}
Here, the inequality (i) holds because of Cauchy-Schwarz inequality and $\text{rank}(\pmb{U}^0\pmb{\Sigma}^0 (\pmb{V}^0)^\tp - \pmb{U}^*\pmb{\Sigma}^* (\pmb{V}^*)^\tp) \leq \text{rank}(\pmb{U}^0\pmb{\Sigma}^0 (\pmb{V}^0)^\tp) + \text{rank}(\pmb{U}^*\pmb{\Sigma}^* (\pmb{V}^*)^\tp) = 2r$.
On the other hand, we set $\pmb{H}^0=\pmb{Z}^0-\overline{\pmb{Z}}{}^0$ and from Lemma 5.4 in \cite{TBS2016}, we have
\begin{eqnarray}
\Vert \pmb{Z}^0(\pmb{Z}^0)^\tp - \pmb{Z}^*(\pmb{Z}^*)^\tp\Vert_F^2 \geq 4(\sqrt{2}-1)\sigma_r^*\Vert \pmb{H}^0\Vert_F^2.\label{3p2}
\end{eqnarray}
Combining with (\ref{3p1}) and (\ref{3p2}), we can get
\begin{align}
&\dist(\pmb{Z}^0,\pmb{Z}^*)^2 = \Vert \pmb{H}^0 \Vert_F^2 \nonumber\\
 \leq& \frac{\Vert \pmb{Z}^0(\pmb{Z}^0)^\tp - \pmb{Z}^*(\pmb{Z}^*)^\tp\Vert_F^2}{4(\sqrt{2}-1)\sigma_r^*}
\leq\frac{8}{\sqrt{2}-1}\cdot\frac{C_0^2\mu^2r^3}{d\sigma_r^*}\sigma_1^*{}^2\nonumber\\
=&\frac{8}{\sqrt{2}-1}\cdot\frac{C_0^2\mu^2r^3}{d}\cdot\kappa^2\cdot\sigma_r^*\nonumber\\
\leq&\frac{1}{16}\sigma_r^*, \label{3p3}
\end{align}
where the last inequality comes from $d \geq \frac{128}{\sqrt{2}-1}C_0^2\mu^2\kappa^2r^3$.
Since $\mathcal{P}_{\mathcal{C}_1}$ is row-wise clipping, we apply Lemma \ref{l_3} to get 
\begin{eqnarray*}
\dist(\pmb{Z}^1,\pmb{Z}^*)^2 \leq \Vert \mathcal{P}_{\mathcal{C}_1}(\pmb{Z}^0)-\overline{\pmb{Z}}{}^0\Vert_F^2 
\leq \Vert \pmb{Z}^0-\overline{\pmb{Z}}{}^0\Vert_F^2 
= \dist(\pmb{Z}^0,\pmb{Z}^*)^2 \leq \frac{1}{16}\sigma_r^*.
\end{eqnarray*}
\end{proof}

\begin{proof}[Proof of Corollary \ref{r_1}]
From the incoherence condition ${\rm \pmb{\mathscr{A}}_1}$, we have
\begin{eqnarray*}
\Vert \pmb{Z}^*\Vert_{2,\infty}\leq\sqrt{\frac{\mu r}{n}}\cdot\sqrt{\sigma_1^*}.
\end{eqnarray*}
Thus, we need only to prove $\sigma_1^* \leq 2\sigma_1(\pZ^0)$. From (\ref{3p3}) in the proof of Theorem \ref{1}, it goes that
\begin{eqnarray*}
\frac{8}{\sqrt{2}-1}\cdot\frac{C_0^2\mu^2r^3}{d\sigma_r^*}\sigma_1^*{}^2 \leq \frac{1}{16}\sigma_r^*.
\end{eqnarray*}
After transposition, we get
\begin{align*}
\left\Vert \frac{1}{p}\mathcal{P}_{\Omega}(\pmb{M}^*)-\pmb{M}^*\right\Vert^2 \leq \frac{C_0^2\mu^2r^2}{d}\sigma_1^*{}^2 
 \leq \frac{\sqrt{2}-1}{8r}\cdot\frac{1}{16}\sigma_r^*{}^2\leq \left( \frac{1}{16}\sigma_r^*\right)^2
\end{align*}
because of Lemma \ref{l_1}. Then, by Weyl' s theorem, we have
\begin{eqnarray}
\vert \sigma_1(\pZ^0)-\sigma_1^*\vert\leq\frac{1}{16}\sigma_r^*. \label{weyl}
\end{eqnarray}
That means $\sigma_1^* \leq 2\sigma_1(\pZ^0)$, which completes the proof of Corollary \ref{r_1}.
\end{proof}

\subsection*{A.2 Local Geometric Condition} \label{subsec:local1}
Before giving the proof of main convergence theorem,
we need to state that the loss function $\mathcal{L}_1(\pmb{Z})$ satisfies local geometric condition \cite{CLS2015} which indicates the gradient of $\mathcal{L}_1(\pmb{Z})$ is well-behaved. Similar definition can also be found in \cite{T2}. 

\begin{itemize}
\item local curvature condition:
\begin{proposition} \label{p_1} 
If $p \geq 262144C_0^2\mu^2r^2\kappa^2/(n_1 \wedge n_2)$, 
by setting $\delta_d =\frac{1}{64}$, $ \lambda = \frac{1}{2}$,  then for any $\pmb{Z} \in \mathcal{C}_1$ satisfying $\dist(\pmb{Z},\pmb{Z}^*) \leq \frac{1}{4}\sqrt{\sigma_r^*}$, we have 
\begin{eqnarray}
\langle \nabla  \mL_1(\pmb{Z}), \pmb{H} \rangle \geq \frac{3}{8}\sigma^*_r\Vert\pmb{H} \Vert_F^2+\frac{1}{4} \Vert (\overline{\pmb{Z}})^\tp \pmb{D}\pmb{H}\Vert_F^2. \label{lcc}
\end{eqnarray}
\end{proposition}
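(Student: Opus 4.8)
The plan is to split $\la \nabla \mL_1(\pZ), \pH\ra$ into a data-fidelity contribution and a balancing-regularizer contribution, lower bound each, and add them, mirroring the argument for PGD in the random setting \cite{T2}. Write $\nabla\mL_1(\pZ) = \frac1p \mathcal{P}_{\underline{\Omega}}(\pZ\pZ^\tp-\pmb{N}^*)\pZ + \lambda \pmb{D}\pZ\pZ^\tp\pmb{D}\pZ$. Since $\oZ = \pZ^*\pmb{R}$ for an orthogonal $\pmb{R}$, we have $\oZ\,\oZ^\tp = \pmb{N}^*$, and, because $\pZ^*$ is balanced, $(\pZ^*)^\tp\pmb{D}\pZ^* = \pmb{0}$, hence $\oZ^\tp\pmb{D}\oZ = \pmb{0}$; moreover $\oZ^\tp\pH$ is symmetric by optimality of $\oZ$ over rotations. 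Substituting $\pZ = \oZ + \pH$ gives $\pZ\pZ^\tp - \pmb{N}^* = \pmb{M}_1 + \pmb{M}_2$ with $\pmb{M}_1 = \pH\oZ^\tp + \oZ\pH^\tp$ and $\pmb{M}_2 = \pH\pH^\tp$, and, using the symmetry of $\mathcal{P}_{\underline{\Omega}}(\pZ\pZ^\tp-\pmb{N}^*)$ and of $\pZ^\tp\pmb{D}\pZ$, one obtains $\la \nabla \mL_1(\pZ), \pH\ra = \frac{1}{2p}\la \mathcal{P}_{\underline{\Omega}}(\pmb{M}_1+\pmb{M}_2),\, \pmb{M}_1 + 2\pmb{M}_2\ra + \lambda \la \pZ^\tp\pmb{D}\pZ,\, \pZ^\tp\pmb{D}\pH\ra$.

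For the data term, the crucial structural fact is that the off-diagonal block of $\pmb{M}_1$ lies in the subspace $T$, so Lemma \ref{l_5} gives $\frac{1}{2p}\|\mathcal{P}_{\underline{\Omega}}(\pmb{M}_1)\|_F^2 \ge (1-\tilde\delta)\|\text{(off-diagonal block of }\pmb{M}_1)\|_F^2$. The remaining pieces of $\frac1{2p}\la\mathcal{P}_{\underline{\Omega}}(\pmb{M}_1+\pmb{M}_2),\pmb{M}_1+2\pmb{M}_2\ra$ all carry at least one factor of $\pmb{M}_2 = \pH\pH^\tp$; I would bound them by Cauchy--Schwarz together with Lemma \ref{l_7} and Lemma \ref{l_8}, using $\pZ \in \mathcal{C}_1$ and the incoherence of $\oZ$ (Corollary \ref{r_1}) to control $\|\pH\|_{2,\infty}$, and then invoke $\|\pH\|_F^2 \le \frac{1}{16}\sigma_r^*$ (from $\dist(\pZ,\pZ^*)\le \frac14\sqrt{\sigma_r^*}$) and the sample hypothesis --- which forces the deterministic ``leakage'' factor $C_0\mu r\kappa/\sqrt{d}$ to be tiny --- to conclude that these contributions are at most a small multiple of $\sigma_r^*\|\pH\|_F^2$.

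The off-diagonal residual $\|\pH_{\pU}\oZ_{\pV}^\tp + \oZ_{\pU}\pH_{\pV}^\tp\|_F^2$ is not by itself lower bounded by $\sigma_r^*\|\pH\|_F^2$ --- this is the asymmetric-factorization ambiguity --- but a lower bound in the spirit of Lemma 5.4 of \cite{TBS2016} yields $\|\pH_{\pU}\oZ_{\pV}^\tp + \oZ_{\pU}\pH_{\pV}^\tp\|_F^2 \ge c_1\sigma_r^*\|\pH\|_F^2 - c_2\|(\oZ)^\tp\pmb{D}\pH\|_F^2$, the last term being precisely the imbalance the regularizer penalizes. Expanding $\lambda\la\pZ^\tp\pmb{D}\pZ,\pZ^\tp\pmb{D}\pH\ra$ with $\oZ^\tp\pmb{D}\oZ = \pmb{0}$ and absorbing the resulting cross terms by AM--GM (using $\|\pH^\tp\pmb{D}\pH\|_F \le \|\pH\|_F^2 \le \frac14\sqrt{\sigma_r^*}\|\pH\|_F$) gives $\lambda\la\pZ^\tp\pmb{D}\pZ,\pZ^\tp\pmb{D}\pH\ra \ge \lambda c_3\|(\oZ)^\tp\pmb{D}\pH\|_F^2 - (\text{small})\,\sigma_r^*\|\pH\|_F^2$. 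With $\lambda = \frac12$, $\delta_d = \frac1{64}$, and $p \ge 262144\,C_0^2\mu^2 r^2\kappa^2/(n_1\wedge n_2)$ chosen so that $\lambda c_3 - c_2 = \frac14$ and the aggregate of all ``small'' terms fits inside the slack between $(1-\tilde\delta)c_1\sigma_r^*$ and $\frac38\sigma_r^*$, adding the two lower bounds produces $\la\nabla\mL_1(\pZ),\pH\ra \ge \frac38\sigma_r^*\|\pH\|_F^2 + \frac14\|(\oZ)^\tp\pmb{D}\pH\|_F^2$.

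The main obstacle is the bookkeeping: obtaining the two advertised constants $\frac38$ and $\frac14$ rather than merely ``positive'' requires (i) establishing the off-diagonal lower bound in the lifted, deterministic setting with explicit $c_1, c_2$ (and tracking the symmetric versus antisymmetric part of the imbalance, which is where the argument is delicate); (ii) showing the regularizer's coefficient $c_3$ beats $c_2$ by exactly $\frac14/\lambda$ once $\lambda=\frac12$ is fixed; and (iii) verifying that every deterministic-sampling error term --- each carrying a $\mu r$ and one or two powers of $\kappa$ --- is dominated, which is exactly where the $\mu^2 r^2\kappa^2$ sample complexity gets consumed. A minor but easy-to-miss point is correctly relating Lemma \ref{l_5}, stated for $\mathcal{P}_{\Omega}$ on $T \subset \mathbb{R}^{n_1\times n_2}$, to the lifted operator $\mathcal{P}_{\underline{\Omega}}$ and the attendant factor of two.
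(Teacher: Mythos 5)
There is a genuine gap, and it sits exactly at the point you yourself flag as ``delicate.'' Write $S=(\oZ)^\tp\pmb{D}\pH$, $A=\pH_{\pU}^\tp\oZ_{\pU}$, $B=\pH_{\pV}^\tp\oZ_{\pV}$, so $S=A^\tp-B^\tp$, and recall that optimal alignment forces $\oZ^\tp\pH=A^\tp+B^\tp$ to be symmetric. Your second key inequality --- $\lambda\la\pZ^\tp\pmb{D}\pZ,\pZ^\tp\pmb{D}\pH\ra\geq \lambda c_3\|S\|_F^2-(\text{small})\,\sigma_r^*\|\pH\|_F^2$ --- is false as stated. Expanding with $\oZ^\tp\pmb{D}\oZ=\pmb{0}$, the part of the regularizer that is quadratic in $S$ is $\lambda\bigl(\|S\|_F^2+\mathrm{tr}(S^2)\bigr)=2\lambda\|S_{\mathrm{sym}}\|_F^2$: the antisymmetric part of $S$ cancels identically, and the remaining terms are cubic/quartic in $\pH$. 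But configurations with $S$ purely antisymmetric are feasible in the admissible region (take $A=-B$ antisymmetric, e.g.\ $\pH_{\pU}=\oZ_{\pU}(\oZ_{\pU}^\tp\oZ_{\pU})^{-1}A_a^\tp$, $\pH_{\pV}=-\oZ_{\pV}(\oZ_{\pV}^\tp\oZ_{\pV})^{-1}A_a^\tp$ with $A_a$ small and antisymmetric; then $\oZ^\tp\pH=\pmb{0}$, so $\oZ$ is still the optimal alignment), and there $\|S\|_F^2$ can be of order $\sigma_1^*\|\pH\|_F^2$ while the regularizer contributes essentially nothing; no ``small'' multiple of $\sigma_r^*\|\pH\|_F^2$ can absorb the discrepancy when $\kappa$ is large. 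Consequently your assembly cannot work: step (a) discards the data cross term $2\la\pH_{\pU}\oZ_{\pV}^\tp,\oZ_{\pU}\pH_{\pV}^\tp\ra$ through the lossy bound $\geq c_1\sigma_r^*\|\pH\|_F^2-c_2\|S\|_F^2$, yet that cross term is precisely the only source of positive control on $\|S_{\mathrm{anti}}\|_F^2$ (with the alignment symmetry it equals $\tfrac12\|\oZ^\tp\pH\|_F^2-\tfrac12\mathrm{tr}(S^2)\geq-\tfrac12\|S_{\mathrm{sym}}\|_F^2+\tfrac12\|S_{\mathrm{anti}}\|_F^2$). Adding (a) to the (false) (b) leaves an uncancelled $-c_2\|S_{\mathrm{anti}}\|_F^2$, which can be of order $\kappa\,\sigma_r^*\|\pH\|_F^2$, so you do not even get positivity, let alone the constants $\tfrac38$ and $\tfrac14$.

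The fix is what the paper actually does: keep the cross term exactly and combine it algebraically with the regularizer before any lossy bounding. In the paper's $A_1$ computation this combination yields $A_1\geq\frac{\lambda}{2}\|S\|_F^2+\frac12\|\pH^\tp\oZ\|_F^2-\frac72\lambda\|\pH^\tp\pmb{D}\pH\|_F^2+(\lambda-\tfrac12)\mathrm{tr}\bigl(\pH^\tp\pmb{D}\oZ\pH^\tp\pmb{D}\oZ\bigr)$ plus a nonnegative square, and the choice $\lambda=\tfrac12$ is used exactly to annihilate the indefinite trace term $\mathrm{tr}(S^2)$ --- i.e.\ the symmetric/antisymmetric interplay is resolved by exact cancellation, not by treating data and regularizer as independent modules. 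Equivalently, a corrected modular version of your plan must refine both of your inequalities: the data part should read $\|\pH_{\pU}\oZ_{\pV}^\tp+\oZ_{\pU}\pH_{\pV}^\tp\|_F^2\geq\sigma_r^*\|\pH\|_F^2-\tfrac12\|S_{\mathrm{sym}}\|_F^2+\tfrac12\|S_{\mathrm{anti}}\|_F^2$, and the regularizer part should claim only $2\lambda\|S_{\mathrm{sym}}\|_F^2$ minus cubic/quartic errors; those two do add up (with $\lambda=\tfrac12$, the errors controlled by Lemma \ref{l_7} and $\|\pH\|_F^2\leq\tfrac1{16}\sigma_r^*$) to the stated $\tfrac38\sigma_r^*\|\pH\|_F^2+\tfrac14\|S\|_F^2$. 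The rest of your outline (the expansion of $\la\nabla\mL_1,\pH\ra$, the use of Lemma \ref{l_5} on $T$ through the lifted operator with its factor of two, and the $\pmb{M}_2$-error bookkeeping via Lemmas \ref{l_7}--\ref{l_8} and the sample-size hypothesis) matches the paper and is fine.
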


\item local smoothness condition: 
\begin{proposition}\label{p_2}
If $p \geq 262144C_0^2\mu^2r^2\kappa^2/(n_1 \wedge n_2)$, 
by setting $\delta_d =\frac{1}{64}$, $ \lambda = \frac{1}{2}$, then for any $\pmb{Z} \in \mathcal{C}_1$ satisfying $\dist(\pmb{Z},\pmb{Z}^*) \leq \frac{1}{4}\sqrt{\sigma_r^*}$, we have
\begin{eqnarray}
\Vert \nabla  \mL_1(\pmb{Z}) \Vert_F^2 \leq 1524\mu^2r^2\sigma_1^*{}^2\Vert \pmb{H}\Vert_F^2 + 5\sigma_1^*\Vert (\overline{\pmb{Z}})^\tp \pmb{D}\pmb{H}\Vert_F^2. \label{lsc}
\end{eqnarray}
\end{proposition}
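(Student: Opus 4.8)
The plan is to bound $\|\nabla\mL_1(\pmb{Z})\|_F$ by splitting the gradient \eqref{lo5} into its data part $\pmb{G}_{\rm d}$ (the first sum, which equals a constant multiple of $\tfrac1p\mathcal{P}_{\underline{\Omega}}(\pmb{W})\pmb{Z}$ with $\pmb{W}:=\pmb{H}\overline{\pmb{Z}}^\tp+\overline{\pmb{Z}}\pmb{H}^\tp+\pmb{H}\pmb{H}^\tp=\pmb{Z}\pmb{Z}^\tp-\overline{\pmb{Z}}\,\overline{\pmb{Z}}^\tp$) and its regularizer part $\lambda\pmb{D}\pmb{Z}\pmb{Z}^\tp\pmb{D}\pmb{Z}$, bounding each in Frobenius norm, squaring, and applying $2ab\le a^2+b^2$ to collect everything into the two target pieces $\mu^2r^2\sigma_1^{*2}\|\pmb{H}\|_F^2$ and $\sigma_1^*\|(\overline{\pmb{Z}})^\tp\pmb{D}\pmb{H}\|_F^2$. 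Throughout I will use the a priori facts available in the basin $\dist(\pmb{Z},\pmb{Z}^*)\le\frac14\sqrt{\sigma_r^*}$: namely $\|\pmb{H}\|_F\le\frac14\sqrt{\sigma_r^*}$; $\|\overline{\pmb{Z}}\|=\|\pmb{Z}^*\|=\sqrt{\sigma_1^*}$, hence $\|\pmb{Z}\|\le\frac54\sqrt{\sigma_1^*}$; and — from the incoherence of $\pmb{Z}\in\mathcal{C}_1$, the incoherence of $\overline{\pmb{Z}}$ provided by Corollary~\ref{r_1}, and the Weyl bound \eqref{weyl} — the row bounds $\|\pmb{Z}\|_{2,\infty}\vee\|\overline{\pmb{Z}}\|_{2,\infty}\lesssim\sqrt{\mu r\sigma_1^*/(n_1\wedge n_2)}$, whence $\|\pmb{H}\|_{2,\infty}\le4\sqrt{\mu r\sigma_1^*/(n_1\wedge n_2)}$, exactly the hypothesis required to invoke Lemma~\ref{l_7}. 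The choices $\delta_d=\frac1{64}$ and $p\ge262144\,C_0^2\mu^2r^2\kappa^2/(n_1\wedge n_2)$ are used only at the very end, to turn every ``error'' coefficient (a power of $\tilde{\delta}=\sqrt{2(\delta_d^2+C_0^2\mu^2r^2/(d_1\wedge d_2))}$, or of $\tfrac1p\|\pmb{G}-p\pmb{E}\|\lesssim C_0\sqrt{n_1n_2}/\sqrt{d_1\wedge d_2}$ after pairing with $\ell_{2,\infty}$ factors) into a small absolute constant times $\sqrt{\sigma_1^*}$ or $\sqrt{\sigma_r^*}$.

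The regularizer term is the easy one. Because the balanced optimal factorization satisfies $(\pmb{Z}^*)^\tp\pmb{D}\pmb{Z}^*=(\pmb{X}^*)^\tp\pmb{X}^*-(\pmb{Y}^*)^\tp\pmb{Y}^*=\pmb{0}$ and $\overline{\pmb{Z}}=\pmb{Z}^*\pmb{R}$ with $\pmb{R}$ orthogonal, we also have $(\overline{\pmb{Z}})^\tp\pmb{D}\overline{\pmb{Z}}=\pmb{0}$; substituting $\pmb{Z}=\overline{\pmb{Z}}+\pmb{H}$ gives $\pmb{Z}^\tp\pmb{D}\pmb{Z}=(\overline{\pmb{Z}})^\tp\pmb{D}\pmb{H}+\pmb{H}^\tp\pmb{D}\overline{\pmb{Z}}+\pmb{H}^\tp\pmb{D}\pmb{H}$. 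Since $\|\pmb{D}\|=1$, this yields $\|\pmb{D}\pmb{Z}\pmb{Z}^\tp\pmb{D}\pmb{Z}\|_F\le\|\pmb{Z}\|\big(2\|(\overline{\pmb{Z}})^\tp\pmb{D}\pmb{H}\|_F+\|\pmb{H}\|\,\|\pmb{H}\|_F\big)\le\frac54\sqrt{\sigma_1^*}\big(2\|(\overline{\pmb{Z}})^\tp\pmb{D}\pmb{H}\|_F+\frac14\sqrt{\sigma_r^*}\,\|\pmb{H}\|_F\big)$, and with $\lambda=\frac12$, squaring and AM--GM distribute this cleanly across both target terms.

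The data term is where the work is. By duality $\|\pmb{G}_{\rm d}\|_F=\max_{\|\pmb{P}\|_F=1}\la\pmb{G}_{\rm d},\pmb{P}\ra$, and for each test matrix $\pmb{P}$ one pushes $\mathcal{P}_{\underline{\Omega}}$ through the inner product to get (up to the constant) $\la\pmb{G}_{\rm d},\pmb{P}\ra=\tfrac1p\la\pmb{W},\mathcal{P}_{\underline{\Omega}}(\pmb{P}(\overline{\pmb{Z}}+\pmb{H})^\tp)\ra$. Now decompose both arguments along the lifted tangent space $\underline{T}=\{\pmb{Z}^*\pmb{B}^\tp+\pmb{B}(\pmb{Z}^*)^\tp\}$: write $\pmb{W}=\pmb{W}_T+\pmb{H}\pmb{H}^\tp$ with $\pmb{W}_T:=\pmb{H}\overline{\pmb{Z}}^\tp+\overline{\pmb{Z}}\pmb{H}^\tp\in\underline{T}$, and $\pmb{P}(\overline{\pmb{Z}}+\pmb{H})^\tp=\pmb{P}\overline{\pmb{Z}}^\tp+\pmb{P}\pmb{H}^\tp$ with $\pmb{P}\overline{\pmb{Z}}^\tp\in\underline{T}$. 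This produces four groups of inner products. (i) For $\pmb{W}_T$ paired with $\mathcal{P}_{\underline{\Omega}}(\pmb{P}\overline{\pmb{Z}}^\tp)$, both sides lie in $\underline{T}$, so Lemma~\ref{l_5} replaces $\tfrac1p\mathcal{P}_{\underline{\Omega}}$ by the identity up to a $\tilde{\delta}\|\cdot\|_F\|\cdot\|_F$ error, and then $\|\pmb{W}_T\|_F\le2\sqrt{\sigma_1^*}\,\|\pmb{H}\|_F$ and $\|\pmb{P}\overline{\pmb{Z}}^\tp\|_F\le\sqrt{\sigma_1^*}$ close it. (ii) For $\pmb{W}_T$ paired with $\mathcal{P}_{\underline{\Omega}}(\pmb{P}\pmb{H}^\tp)$, split $\tfrac1p\mathcal{P}_{\underline{\Omega}}=\mI+(\tfrac1p\mathcal{P}_{\underline{\Omega}}-\mI)$; the identity piece is again a Frobenius-norm product, and the residual piece is handled exactly as the term $A_{1,1}$ in \ref{prf_prop34} — via the Hadamard identity $(\tfrac1p\mathcal{P}_\Omega-\mI)(\pmb{B})=(\tfrac1p\pmb{G}-\pmb{E})\circ\pmb{B}$, the operator bound of Lemma~\ref{l_10}, the estimate of Lemma~12 in \cite{CLL2020}, and the $\ell_{2,\infty}$ bounds on $\pmb{H}$ and $\overline{\pmb{Z}}$. (iii)--(iv) The two groups carrying the quadratic factor $\pmb{H}\pmb{H}^\tp$ are paired against $\mathcal{P}_{\underline{\Omega}}(\pmb{P}\overline{\pmb{Z}}^\tp)$ resp.\ $\mathcal{P}_{\underline{\Omega}}(\pmb{P}\pmb{H}^\tp)$ by Cauchy--Schwarz, with $\tfrac1p\|\mathcal{P}_{\underline{\Omega}}(\pmb{H}\pmb{H}^\tp)\|_F$ controlled by Lemma~\ref{l_7} (or Lemma~\ref{l_8}, where the cruder $(n_1\vee n_2)$-bound suffices, and $\|\pmb{H}\|_{2,\infty}^2$ brings in the polynomial-in-$(\mu,r)$ factor). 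Collecting all four groups and invoking the stated choices of $\delta_d$ and $p$ to make every stray coefficient small, one obtains $\|\pmb{G}_{\rm d}\|_F\lesssim\sqrt{\mu r}\,\sigma_1^*\|\pmb{H}\|_F$; squaring, merging with the regularizer bound, and tracking the numerical constants yields the stated $1524\mu^2r^2\sigma_1^{*2}\|\pmb{H}\|_F^2+5\sigma_1^*\|(\overline{\pmb{Z}})^\tp\pmb{D}\pmb{H}\|_F^2$ (the constant $1524$ is deliberately generous — any polynomial bound of this shape feeds the contraction argument and sets $\eta=\mathcal{O}(1/(\mu^2r^2\kappa))$ as in Theorem~\ref{t_1}).

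The main obstacle is the bookkeeping in the data term. One must (i) verify the tangent-space memberships $\pmb{W}_T,\pmb{P}\overline{\pmb{Z}}^\tp\in\underline{T}$ and that Lemma~\ref{l_5}, stated for $\mathcal{P}_\Omega$ on $T$, transfers to $\mathcal{P}_{\underline{\Omega}}$ on $\underline{T}$ — this uses that the off-diagonal block of a $\underline{T}$-element is of the form $\pmb{U}^*\pmb{X}^\tp\in T$, with a harmless factor $2$ from the symmetric lifting; (ii) always route $\tfrac1p\mathcal{P}_{\underline{\Omega}}$ either onto a $\underline{T}$-matrix or onto the quadratic $\pmb{H}\pmb{H}^\tp$, never onto a bare $\pmb{P}\pmb{H}^\tp$ or onto $\pmb{W}_T$ in a way that extracts a Frobenius norm, since $\tfrac1p\mathcal{P}_{\underline{\Omega}}$ is \emph{not} bounded as an operator on general matrices (its Frobenius-to-Frobenius norm is $\sim\sqrt{1/p}$); and (iii) carry the resulting half-dozen products down to the final two-term form with explicit constants. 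None of these steps is deep in isolation; the combination is the delicate part. The companion Proposition~\ref{p_1} (local curvature) runs on exactly the same decomposition, with the bonus that there the regularizer \emph{contributes} the positive term $\frac14\|(\overline{\pmb{Z}})^\tp\pmb{D}\pmb{H}\|_F^2$ rather than costing it.
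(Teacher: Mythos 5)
Your overall strategy is sound and would deliver a bound of the stated shape, but for the data term it follows a genuinely different (and finer) route than the paper's own proof. The paper's argument for Proposition \ref{p_2} is deliberately crude: it writes $\Vert\nabla\mL_1(\pmb{Z})\Vert_F=\max_{\Vert\pmb{W}\Vert_F=1}\la\nabla\mL_1(\pmb{Z}),\pmb{W}\ra$, expands the inner product into four sampled terms plus the regularizer, applies $(a+b+c+d+e)^2\leq 5(a^2+\cdots+e^2)$ and Cauchy--Schwarz, and then bounds \emph{every} sampled Frobenius norm — including $\frac1p\Vert\mathcal{P}_{\underline{\Omega}}(\pmb{H}\overline{\pmb{Z}}{}^\tp)\Vert_F^2$ and $\frac1p\Vert\mathcal{P}_{\underline{\Omega}}(\pmb{W}\overline{\pmb{Z}}{}^\tp)\Vert_F^2$ — by the row-degree Lemma \ref{l_8} (with the quadratic term by Lemma \ref{l_7}); it never invokes the restricted isometry Lemma \ref{l_5}, Lemma \ref{l_10}, or the CLL2020 Hadamard estimate in this proof. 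You instead decompose along the lifted tangent space and route the tangent--tangent piece through Lemma \ref{l_5}, the tangent--off-tangent cross terms through the spectral-gap machinery, and the quadratic pieces through Lemmas \ref{l_7}/\ref{l_8}; this essentially mirrors the proof of Proposition \ref{prop_smoothness} for scaled PGD rather than the paper's PGD proof. What your route buys is a sharper dependence ($\Vert\pmb{G}_{\rm d}\Vert_F\lesssim\sqrt{\mu r}\,\sigma_1^*\Vert\pmb{H}\Vert_F$ versus the paper's effective $\mu r\,\sigma_1^*\Vert\pmb{H}\Vert_F$), at the cost of extra bookkeeping (transferring Lemma \ref{l_5} blockwise to $\underline{T}$, which does go through as you indicate); since the target only asks for $\mu^2r^2\sigma_1^{*2}$, the paper's shortcut suffices and is shorter.

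One concrete slip: $\Vert\overline{\pmb{Z}}\Vert=\Vert\pmb{Z}^*\Vert=\sqrt{2\sigma_1^*}$, not $\sqrt{\sigma_1^*}$, because $(\pmb{Z}^*)^\tp\pmb{Z}^*=2\pmb{\Sigma}^*$ for the lifted factor; consequently $\Vert\pmb{Z}\Vert\leq\sqrt{2\sigma_1^*}+\tfrac14\sqrt{\sigma_r^*}\leq\tfrac74\sqrt{\sigma_1^*}$, not $\tfrac54\sqrt{\sigma_1^*}$. This does not damage the structure of your argument, but it doubles the coefficients coming out of your regularizer estimate, and since you bound $\Vert\pmb{D}\pmb{Z}\pmb{Z}^\tp\pmb{D}\pmb{Z}\Vert_F\leq\Vert\pmb{Z}\Vert\,\Vert\pmb{Z}^\tp\pmb{D}\pmb{Z}\Vert_F$ (prefactor $\Vert\pmb{Z}\Vert^2\approx 3\sigma_1^*$ on the $\Vert(\overline{\pmb{Z}})^\tp\pmb{D}\pmb{H}\Vert_F^2$ piece after squaring), hitting the stated coefficient $5\sigma_1^*$ requires a weighted AM--GM rather than the plain $(a+b)^2\leq 2a^2+2b^2$. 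The paper sidesteps this by splitting $\pmb{D}\pmb{Z}\pmb{Z}^\tp\pmb{D}\pmb{Z}=\pmb{D}(\pmb{Z}\pmb{Z}^\tp-\overline{\pmb{Z}}\,\overline{\pmb{Z}}{}^\tp)\pmb{D}\pmb{Z}+\pmb{D}\overline{\pmb{Z}}\,\overline{\pmb{Z}}{}^\tp\pmb{D}\pmb{H}$, so that the $\Vert(\overline{\pmb{Z}})^\tp\pmb{D}\pmb{H}\Vert_F^2$ term carries only $\Vert\overline{\pmb{Z}}\Vert^2=2\sigma_1^*$, giving exactly $5\lambda^2\cdot 2\Vert\overline{\pmb{Z}}\Vert^2=5\sigma_1^*$; you may want to adopt that split to land the constants cleanly.
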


\end{itemize}

Rearranging the terms in the local smoothness condition (\ref{lsc}), we can obtain
\begin{align}
\Vert (\overline{\pmb{Z}})^\tp \pmb{D}\pmb{H}\Vert_F^2 
\geq& \frac{\Vert \nabla  \mL_1(\pmb{Z})\Vert_F^2}{5\sigma_1^*} -\frac{1524}{5}\mu^2r^2\sigma_1^*\Vert \pmb{H}\Vert_F^2\nonumber \\
 \geq& \frac{\Vert \nabla  \mL_1(\pmb{Z})\Vert_F^2}{1524\mu^2 r^2 \kappa \sigma_1^*} - \sigma_r^*\Vert \pmb{H}\Vert_F^2. \label{lsc4}
\end{align}
Combining (\ref{lcc}) and (\ref{lsc4}), we can obtain that $ \mL_1(\pmb{Z})$ satisfies the regularity condition:
\begin{eqnarray}
\langle \nabla  \mL_1(\pmb{Z}), \pmb{H} \rangle \geq \frac{1}{8}\sigma^*_r\Vert\pmb{H} \Vert_F^2+ \frac{\Vert \nabla  \mL_1(\pmb{Z})\Vert_F^2}{6096\mu^2 r^2 \kappa \sigma_1^*}. \label{lsc5}
\end{eqnarray}

\begin{proof}[Proof of Proposition \ref{p_1}]
From \eqref{lo5}, we can compute
\begin{align*}
\langle \nabla  \mL_1(\pmb{Z}), \pmb{H} \rangle  
=& \frac{1}{p}\sum_{l=1}^{2m}\left(\langle \pmb{A}_l,\pmb{H}(\overline{\pmb{Z}})^\tp+\overline{\pmb{Z}}\pmb{H}^\tp+\pmb{H}\pmb{H}^\tp\rangle \right. \\
& \left. \cdot \langle(\pmb{A}_l+\pmb{A}_l^\tp)(\overline{\pmb{Z}}+\pmb{H}), \pmb{H}\rangle\right)
+\lambda \text{tr}(\pmb{H}^\tp\pmb{D}\pmb{Z}\pmb{Z}^\tp \pmb{D}\pmb{Z})\\
=&\frac{1}{p}\sum_{l=1}^{2m}\langle \pmb{A}_l,\pmb{H}(\overline{\pmb{Z}})^\tp+\overline{\pmb{Z}}\pmb{H}^\tp\rangle^2 + \frac{2}{p}\sum_{l=1}^{2m}\langle \pmb{A}_l, \pmb{H}\pmb{H}^\tp\rangle^2\\
&+\frac{3}{p}\sum_{l=1}^{2m}\langle \pmb{A}_l,\pmb{H}(\overline{\pmb{Z}})^\tp+\overline{\pmb{Z}}\pmb{H}^\tp\rangle\langle\pmb{A}_l, \pmb{H}\pmb{H}^\tp\rangle 
 +\lambda \text{tr}(\pmb{H}^\tp\pmb{D}\pmb{Z}\pmb{Z}^\tp \pmb{D}\pmb{Z}).
\end{align*}
Cauchy-Schwarz inequality yields that
\begin{align*}
 \langle \nabla  \mL_1(\pmb{Z}), \pmb{H} \rangle
\geq& \frac{1}{p}\sum_{l=1}^{2m}\langle \pmb{A}_l,\pmb{H}(\overline{\pmb{Z}})^\tp+\overline{\pmb{Z}}\pmb{H}^\tp\rangle^2 + \frac{1}{p}\sum_{l=1}^{2m}2\langle \pmb{A}_l, \pmb{H}\pmb{H}^\tp\rangle^2\\
&-\frac{3}{\sqrt{2}}\frac{1}{p}\sqrt{\sum_{l=1}^{2m}\langle \pmb{A}_l,\pmb{H}(\overline{\pmb{Z}})^\tp+\overline{\pmb{Z}}\pmb{H}^\tp\rangle^2}\sqrt{\sum_{l=1}^{2m}2\langle \pmb{A}_l, \pmb{H}\pmb{H}^\tp\rangle^2} \\
&+ \lambda \text{tr}(\pmb{H}^\tp\pmb{D}\pmb{Z}\pmb{Z}^\tp \pmb{D}\pmb{Z}).
\end{align*}
We then utilize the fundamental inequality $(a-b)^2\geq\frac{a^2}{2}-b^2$ to get
\begin{align*}
 \langle \nabla  \mL_1(\pmb{Z}), \pmb{H} \rangle 
\geq& \frac{1}{2p}\sum_{l=1}^{2m}\langle \pmb{A}_l,\pmb{H}(\overline{\pmb{Z}})^\tp+\overline{\pmb{Z}}\pmb{H}^\tp\rangle^2 - \frac{5}{2p}\sum_{l=1}^{2m}\langle \pmb{A}_l, \pmb{H}\pmb{H}^\tp\rangle^2
+ \lambda \text{tr}(\pmb{H}^\tp\pmb{D}\pmb{Z}\pmb{Z}^\tp \pmb{D}\pmb{Z})\\
\geq&\frac{1}{2p}\Vert \mathcal{P}_{\underline{\Omega}}(\pmb{H}(\overline{\pmb{Z}})^\tp+\overline{\pmb{Z}}\pmb{H}^\tp)\Vert_F^2 - \frac{5}{2p}\Vert \mathcal{P}_{\underline{\Omega}}(\pmb{H}\pmb{H}^\tp)\Vert_F^2
+ \lambda \text{tr}(\pmb{H}^\tp\pmb{D}\pmb{Z}\pmb{Z}^\tp \pmb{D}\pmb{Z}).
\end{align*}

Firstly, we turn to estimate $\frac{1}{2p}\Vert \mathcal{P}_{\underline{\Omega}}(\pmb{H}(\overline{\pmb{Z}})^\tp+\overline{\pmb{Z}}\pmb{H}^\tp)\Vert_F^2$. By the symmetry of $\underline{\Omega}$, we have
\begin{align*}
\frac{1}{2p}\Vert \mathcal{P}_{\underline{\Omega}}(\pmb{H}(\overline{\pmb{Z}})^\tp+\overline{\pmb{Z}}\pmb{H}^\tp)\Vert_F^2
 =& \frac{1}{p}\Vert \mathcal{P}_\Omega (\pmb{H}_{\pmb{U}}(\overline{\pmb{Z}}_{\pmb{V}})^\tp + \overline{\pmb{Z}}_{\pmb{U}}(\pmb{H}_{\pmb{V}})^\tp)\Vert_F^2\\
=&\frac{1}{p}\Vert \mathcal{P}_\Omega (\pmb{H}_{\pmb{U}}(\overline{\pmb{Z}}_{\pmb{V}})^\tp)\Vert_F^2 + \frac{1}{p}\Vert \mathcal{P}_\Omega (\overline{\pmb{Z}}_{\pmb{U}}(\pmb{H}_{\pmb{V}})^\tp)\Vert_F^2 \\
&+ \frac{2}{p}\langle \mathcal{P}_\Omega (\pmb{H}_{\pmb{U}}(\overline{\pmb{Z}}_{\pmb{V}})^\tp), \mathcal{P}_\Omega (\overline{\pmb{Z}}_{\pmb{U}}(\pmb{H}_{\pmb{V}})^\tp)\rangle.
\end{align*}
By the settings of $\pmb{H}$ and $\pmb{Z}$, we can clarify that $\pmb{H}_{\pmb{U}}(\overline{\pmb{Z}}_{\pmb{V}})^\tp$ and $\overline{\pmb{Z}}_{\pmb{U}}(\pmb{H}_{\pmb{V}})^\tp$ belong to the space $T$ in \eqref{def_T}. Then, based on Lemma \ref{l_5}, we have
\begin{align}
\frac{1}{2p}\Vert \mathcal{P}_{\underline{\Omega}}(\pmb{H}(\overline{\pmb{Z}})^\tp+\overline{\pmb{Z}}\pmb{H}^\tp)\Vert_F^2 
\geq& (1-\tilde{\delta})\left( \Vert \pmb{H}_{\pmb{U}}(\overline{\pmb{Z}}_{\pmb{V}})^\tp\Vert_F^2+\Vert \overline{\pmb{Z}}_{\pmb{U}}(\pmb{H}_{\pmb{V}})^\tp\Vert_F^2 \right)\nonumber\\
&  -2\tilde{\delta} \Vert \pmb{H}_{\pmb{U}}(\overline{\pmb{Z}}_{\pmb{V}})^\tp \Vert_F \Vert \overline{\pmb{Z}}_{\pmb{U}}(\pmb{H}_{\pmb{V}})^\tp\Vert_F 
 + 2\langle \pmb{H}_{\pmb{U}}(\overline{\pmb{Z}}_{\pmb{V}})^\tp,  \overline{\pmb{Z}}_{\pmb{U}}(\pmb{H}_{\pmb{V}})^\tp\rangle  \nonumber\\
\overset{(i)}{\geq}&  (1-2\tilde{\delta})\left( \Vert \pmb{H}_{\pmb{U}}(\overline{\pmb{Z}}_{\pmb{V}})^\tp\Vert_F^2+\Vert \overline{\pmb{Z}}_{\pmb{U}}(\pmb{H}_{\pmb{V}})^\tp\Vert_F^2 \right) \nonumber\\
&+ 2\langle \pmb{H}_{\pmb{U}}(\overline{\pmb{Z}}_{\pmb{V}})^\tp,  \overline{\pmb{Z}}_{\pmb{U}}(\pmb{H}_{\pmb{V}})^\tp\rangle, \label{lcc1}
\end{align}
where $\tilde{\delta} = \sqrt{2(\delta_d^2+\frac{C_0^2\mu^2r^2}{d})}$ and  the inequality $(i)$ holds because $2\Vert \pmb{H}_{\pmb{U}}(\overline{\pmb{Z}}_{\pmb{V}})^\tp \Vert_F \Vert \overline{\pmb{Z}}_{\pmb{U}}(\pmb{H}_{\pmb{V}})^\tp\Vert_F \leq \Vert \pmb{H}_{\pmb{U}}(\overline{\pmb{Z}}_{\pmb{V}})^\tp\Vert_F^2+\Vert \overline{\pmb{Z}}_{\pmb{U}}(\pmb{H}_{\pmb{V}})^\tp\Vert_F^2$.\\
Recalling that $\overline{\pmb{Z}}_{\pmb{V}} = \pmb{V}^*(\pmb{\Sigma}^*)^{\frac{1}{2}}\pmb{R}$ and $ \overline{\pmb{Z}}_{\pmb{U}} = \pmb{U}^*(\pmb{\Sigma}^*)^{\frac{1}{2}}\pmb{R}$ where $\pmb{R}$ is a orthogonal matrix, we have
$\sigma (\overline{\pmb{Z}}_{\pmb{V}})=\sqrt{\sigma^*_r}, \sigma (\overline{\pmb{Z}}_{\pmb{U}})=\sqrt{\sigma^*_r}$.
Thus, it goes that
\begin{eqnarray}
 \Vert \pmb{H}_{\pmb{U}}(\overline{\pmb{Z}}_{\pmb{V}})^\tp\Vert_F^2 \geq \sigma^*_r \Vert \pmb{H}_{\pmb{U}}\Vert_F^2, \Vert \overline{\pmb{Z}}_{\pmb{U}}(\pmb{H}_{\pmb{V}})^\tp\Vert_F^2 \geq \sigma^*_r \Vert \pmb{H}_{\pmb{V}}\Vert_F^2.\label{lcc2}
\end{eqnarray}
Combining with (\ref{lcc1}) and (\ref{lcc2}), we have 
\begin{align*}
\frac{1}{2p}\Vert \mathcal{P}_{\underline{\Omega}}(\pmb{H}(\overline{\pmb{Z}})^\tp+\overline{\pmb{Z}}\pmb{H}^\tp)\Vert_F^2 
\geq& (1-2\tilde{\delta})\sigma^*_r\Vert\pmb{H} \Vert_F^2 + 2\langle \pmb{H}_{\pmb{U}}(\overline{\pmb{Z}}_{\pmb{V}})^\tp,  \overline{\pmb{Z}}_{\pmb{U}}(\pmb{H}_{\pmb{V}})^\tp\rangle.
\end{align*}
Thus, it goes that
\begin{align}
\langle \nabla  \mL_1(\pmb{Z}), \pmb{H} \rangle 
\geq&   \underbrace{ 2\langle \pmb{H}_{\pmb{U}}(\overline{\pmb{Z}}_{\pmb{V}})^\tp,  \overline{\pmb{Z}}_{\pmb{U}}(\pmb{H}_{\pmb{V}})^\tp\rangle +  \lambda \text{tr}(\pmb{H}^\tp\pmb{D}\pmb{Z}\pmb{Z}^\tp \pmb{D}\pmb{Z})}_{A_1}  \nonumber\\
 & - \frac{5}{2p}\Vert \mathcal{P}_{\underline{\Omega}}(\pmb{H}\pmb{H}^\tp)\Vert_F^2 + (1-2\tilde{\delta})\sigma^*_r\Vert\pmb{H} \Vert_F^2. \label{lcc4}
\end{align}

Secondly, we turn to bound $A_1$.
Since
\begin{align*}
2\langle \pmb{H}_{\pmb{U}}(\overline{\pmb{Z}}_{\pmb{V}})^\tp,  \overline{\pmb{Z}}_{\pmb{U}}(\pmb{H}_{\pmb{V}})^\tp\rangle =& \left\langle \left[                 
  \begin{array}{ccc}   
   \pmb{H}_{\pmb{U}}\\  
   \pmb{H}_{\pmb{V}}\\  
  \end{array}
\right], \left[                 
  \begin{array}{ccc}   
    \overline{\pmb{Z}}_{\pmb{U}}(\pmb{H}_{\pmb{V}})^\tp\overline{\pmb{Z}}_{\pmb{V}}\\  
     \overline{\pmb{Z}}_{\pmb{V}}(\pmb{H}_{\pmb{U}})^\tp\overline{\pmb{Z}}_{\pmb{U}}\\  
  \end{array}
\right] \right\rangle  \\
=& \left \langle \pmb{H}, \left[                 
  \begin{array}{ccc}   
    \pmb{0} &  \overline{\pmb{Z}}_{\pmb{U}}(\pmb{H}_{\pmb{V}})^\tp\\  
     \overline{\pmb{Z}}_{\pmb{V}}(\pmb{H}_{\pmb{U}})^\tp & \pmb{0}\\  
  \end{array}
\right]\overline{\pmb{Z}}\right \rangle \\
=& \left\langle \pmb{H}, \frac{1}{2}(\overline{\pmb{Z}}\pmb{H}^\tp-\pmb{D}\overline{\pmb{Z}}\pmb{H}^\tp \pmb{D})\overline{\pmb{Z}}\right \rangle,
\end{align*}
we can get
\bea
A_1 = \langle \pmb{H}, \frac{1}{2}(\overline{\pmb{Z}}\pmb{H}^\tp-\pmb{D}\overline{\pmb{Z}}\pmb{H}^\tp \pmb{D})\overline{\pmb{Z}} \rangle + \lambda\langle \pmb{H}, \pmb{D}\pmb{Z}\pmb{Z}^\tp \pmb{D}\pmb{Z} \rangle.
\eea
Since $\pmb{Z} = \overline{\pmb{Z}}+\pmb{H}$, $(\overline{\pmb{Z}})^\tp \pmb{D}\overline{\pmb{Z}} = \pmb{0}$ and $\pmb{Z}\pmb{Z}^\tp-\overline{\pmb{Z}}(\overline{\pmb{Z}})^\tp = \pmb{H}\pmb{H}^\tp + \overline{\pmb{Z}}\pmb{H}^\tp+\pmb{H}(\overline{\pmb{Z}})^\tp$, we have
\begin{align*}
A_1 =& \langle \pmb{H}, \frac{1}{2}(\overline{\pmb{Z}}\pmb{H}^\tp-\pmb{D}\overline{\pmb{Z}}\pmb{H}^\tp \pmb{D})\overline{\pmb{Z}} \rangle+ \lambda \Vert (\overline{\pmb{Z}})^\tp \pmb{D}\pmb{H}\Vert_F^2\\
& + \lambda\langle \pmb{H}, \pmb{D}(\pmb{H}\pmb{H}^\tp + \overline{\pmb{Z}}\pmb{H}^\tp+\pmb{H}(\overline{\pmb{Z}})^\tp) \pmb{D}(\overline{\pmb{Z}}+\pmb{H}) \rangle.
\end{align*}
After simple computations, it arrives at
\begin{align*}
A_1
=& \frac{\lambda}{2} \Vert (\overline{\pmb{Z}})^\tp \pmb{D}\pmb{H}\Vert_F^2 + \frac{1}{2}\Vert \pmb{H}^\tp\overline{\pmb{Z}}\Vert_F^2 - \frac{7}{2}\lambda\Vert \pmb{H}^\tp \pmb{D}\pmb{H}\Vert_F^2 \\
&+ \frac{\lambda}{2} \Vert (\overline{\pmb{Z}})^\tp \pmb{D}\pmb{H}+3\pmb{H}^\tp \pmb{D}\pmb{H}\Vert_F^2 
+ (\lambda -\frac{1}{2})\text{tr} (\pmb{H}^\tp \pmb{D}\overline{\pmb{Z}}\pmb{H}^\tp \pmb{D}\overline{\pmb{Z}})\\
\geq&  \frac{\lambda}{2} \Vert (\overline{\pmb{Z}})^\tp \pmb{D}\pmb{H}\Vert_F^2  - \frac{7}{2}\lambda\Vert \pmb{H}^\tp \pmb{D}\pmb{H}\Vert_F^2
 + (\lambda -\frac{1}{2})\text{tr} (\pmb{H}^\tp \pmb{D}\overline{\pmb{Z}}\pmb{H}^\tp \pmb{D}\overline{\pmb{Z}}).
\end{align*}
 By setting $\lambda = \frac{1}{2}$, we can get
 \begin{eqnarray}
 A_1 \geq \frac{1}{4} \Vert (\overline{\pmb{Z}})^\tp \pmb{D}\pmb{H}\Vert_F^2  - \frac{7}{4}\Vert \pmb{H}\Vert_F^4. \label{lcc5}
 \end{eqnarray}

At last, we turn to bound $\frac{1}{p}\Vert \mathcal{P}_{\underline{\Omega}}(\pmb{H}\pmb{H}^\tp)\Vert_F^2$.
Based on Corollary \ref{r_1}, equation \eqref{weyl} and $\pZ \in \mathcal{C}_1$, we can get
\be
\| \pmb{H} \|_{2,\infty} \leq \| \pmb{Z} \|_{2,\infty} + \| \overline{\pmb{Z}} \|_{2,\infty} \leq 4\sqrt{\frac{\mu r \sigma_1^*}{n}}. \label{H_2infty}
\ee
Then using Lemma \ref{l_7} by choosing $p \geq 262144C_0^2\mu^2r^2\kappa^2/n$, we have 
   \begin{eqnarray}
  \frac{1}{p}\Vert \mathcal{P}_{\underline{\Omega}}(\pmb{H}\pmb{H}^\tp)\Vert_F^2 \leq 2\Vert \pmb{H}\Vert_F^4 + \frac{1}{16}\sigma_r^*\Vert \pmb{H}\Vert_F^2.  \label{lcc6}
   \end{eqnarray}

 Combining (\ref{lcc4}), (\ref{lcc5}), (\ref{lcc6}) with $\delta_d \leq \frac{1}{64}$ and $p \geq 262144C_0^2\mu^2r^2\kappa^2/n$, we can get
 \begin{align*}
 \langle \nabla  \mL_1(\pmb{Z}), \pmb{H} \rangle
 \geq& \left(1-2\sqrt{2\left(\delta_d^2+\frac{C_0^2\mu^2r^2}{d}\right)}\right)\sigma^*_r\Vert\pmb{H} \Vert_F^2 +\frac{1}{4} \Vert (\overline{\pmb{Z}})^\tp \pmb{D}\pmb{H}\Vert_F^2  \nonumber\\  
 &- \frac{7}{4}\Vert \pmb{H}\Vert_F^4  - \frac{5}{2}\left(2\Vert \pmb{H}\Vert_F^4 + \frac{1}{16}\sigma_r^*\Vert \pmb{H}\Vert_F^2\right)\\
 \geq& \left(1-\frac{3}{64}- \frac{5}{32}\right)\sigma^*_r\Vert\pmb{H} \Vert_F^2 - \frac{27}{4}\Vert \pmb{H}\Vert_F^4+\frac{1}{4} \Vert (\overline{\pmb{Z}})^\tp \pmb{D}\pmb{H}\Vert_F^2.
 \end{align*}
Since $\dist(\pZ, \pZ^*) \leq \frac{1}{4}\sqrt{\sigma_r^*}$, 
the local curvature condition follows that
\begin{eqnarray}
 \langle \nabla  \mL_1(\pmb{Z}), \pmb{H} \rangle \geq \frac{3}{8}\sigma^*_r\Vert\pmb{H} \Vert_F^2+\frac{1}{4} \Vert (\overline{\pmb{Z}})^\tp \pmb{D}\pmb{H}\Vert_F^2. \label{lcc7}
\end{eqnarray}
\end{proof}

\begin{proof}[Proof of Proposition \ref{p_2}]
Since the Frobenius norm satisfies 
\begin{eqnarray*}
\Vert \nabla  \mL_1(\pmb{Z})\Vert_F^2 = \max_{\Vert \pmb{W}\Vert_F=1}\vert \langle\nabla  \mL_1(\pmb{Z}), \pmb{W}\rangle\vert^2,
\end{eqnarray*}
thus we focus on the inner product $\langle\nabla  \mL_1(\pmb{Z}), \pmb{W}\rangle$ when $\Vert \pmb{W}\Vert_F=1$,
\begin{align*}
\langle\nabla  \mL_1(\pmb{Z}), \pmb{W}\rangle 
=&  \frac{1}{p}\sum_{l=1}^{2m}\left(\langle \pmb{A}_l,\pmb{H}(\overline{\pmb{Z}})^\tp+\overline{\pmb{Z}}\pmb{H}^\tp+\pmb{H}\pmb{H}^\tp\rangle 
 \cdot\langle(\pmb{A}_l+\pmb{A}_l^\tp)(\overline{\pmb{Z}}+\pmb{H}), \pmb{W}\rangle\right)
\\&+ \lambda\langle \pmb{D}\pmb{Z}\pmb{Z}^\tp \pmb{D}\pmb{Z}, \pmb{W}\rangle\\
=&\frac{1}{p} \langle \mathcal{P}_{\underline{\Omega}}(\pmb{H}(\overline{\pmb{Z}})^\tp+\overline{\pmb{Z}}\pmb{H}^\tp), \mathcal{P}_{\underline{\Omega}}( \pmb{W}(\overline{\pmb{Z}})^\tp+\overline{\pmb{Z}}\pmb{W}^\tp)\rangle \\
& + \frac{1}{p} \langle \mathcal{P}_{\underline{\Omega}}(\pmb{H}\pmb{H}^\tp), \mathcal{P}_{\underline{\Omega}}( \pmb{W}(\overline{\pmb{Z}})^\tp+\overline{\pmb{Z}}\pmb{W}^\tp)\rangle \\
& +\frac{1}{p} \langle \mathcal{P}_{\underline{\Omega}}(\pmb{H}(\overline{\pmb{Z}})^\tp+\overline{\pmb{Z}}\pmb{H}^\tp), \mathcal{P}_{\underline{\Omega}}(\pmb{W}\pmb{H}^\tp+\pmb{H}\pmb{W}^\tp)\rangle \\
&+ \frac{1}{p} \langle \mathcal{P}_{\underline{\Omega}}(\pmb{H}\pmb{H}^\tp), \mathcal{P}_{\underline{\Omega}}(\pmb{W}\pmb{H}^\tp+\pmb{H}\pmb{W}^\tp)\rangle 
+ \lambda\langle \pmb{D}\pmb{Z}\pmb{Z}^\tp \pmb{D}\pmb{Z}, \pmb{W}\rangle.
\end{align*}
Since $(a+b+c+d+e)^2\leq 5(a^2+b^2+c^2+d^2+e^2)$, we have
\begin{align*}
\vert \langle\nabla  \mL_1(\pmb{Z}), \pmb{W}\rangle\vert^2
\leq&\frac{5}{p^2} \langle \mathcal{P}_{\underline{\Omega}}(\pmb{H}(\overline{\pmb{Z}})^\tp+\overline{\pmb{Z}}\pmb{H}^\tp), \mathcal{P}_{\underline{\Omega}}( \pmb{W}(\overline{\pmb{Z}})^\tp+\overline{\pmb{Z}}\pmb{W}^\tp)\rangle^2\\
& + \frac{5}{p^2} \langle \mathcal{P}_{\underline{\Omega}}(\pmb{H}\pmb{H}^\tp), \mathcal{P}_{\underline{\Omega}}( \pmb{W}(\overline{\pmb{Z}})^\tp+\overline{\pmb{Z}}\pmb{W}^\tp)\rangle^2\\
&+ \frac{5}{p^2}\langle \mathcal{P}_{\underline{\Omega}}(\pmb{H}(\overline{\pmb{Z}})^\tp+\overline{\pmb{Z}}\pmb{H}^\tp), \mathcal{P}_{\underline{\Omega}}(\pmb{W}\pmb{H}^\tp+\pmb{H}\pmb{W}^\tp)\rangle^2\\
&+ \frac{5}{p^2} \langle \mathcal{P}_{\underline{\Omega}}(\pmb{H}\pmb{H}^\tp), \mathcal{P}_{\underline{\Omega}}(\pmb{W}\pmb{H}^\tp+\pmb{H}\pmb{W}^\tp)\rangle^2  
+ 5\lambda^2\langle \pmb{D}\pmb{Z}\pmb{Z}^\tp \pmb{D}\pmb{Z}, \pmb{W}\rangle^2.
\end{align*}
Cauchy-Schwarz inequality yields that
\begin{align*}
\vert \langle\nabla  \mL_1(\pmb{Z}), \pmb{W}\rangle\vert^2
\leq& \frac{5}{p^2}\left[ \Vert \mathcal{P}_{\underline{\Omega}}(\pmb{H}(\overline{\pmb{Z}})^\tp+\overline{\pmb{Z}}\pmb{H}^\tp)\Vert_F^2 +  \Vert \mathcal{P}_{\underline{\Omega}}(\pmb{H}\pmb{H}^\tp)\Vert_F^2 \right] \\
&\cdot \left[ \Vert \mathcal{P}_{\underline{\Omega}}( \pmb{W}(\overline{\pmb{Z}})^\tp+\overline{\pmb{Z}}\pmb{W}^\tp)\Vert_F^2 +  \Vert \mathcal{P}_{\underline{\Omega}}(\pmb{W}\pmb{H}^\tp+\pmb{H}\pmb{W}^\tp)\Vert_F^2 \right] \\
&+ 5\lambda^2\Vert \pmb{W}\Vert_F^2\Vert \pmb{D}\pmb{Z}\pmb{Z}^\tp \pmb{D}\pmb{Z}\Vert_F^2\\
\overset{(i)}{\leq}& \frac{5}{p}\left[ 4\Vert \mathcal{P}_{\underline{\Omega}}(\pmb{H}(\overline{\pmb{Z}})^\tp)\Vert_F^2 + \Vert \mathcal{P}_{\underline{\Omega}}(\pmb{H}\pmb{H}^\tp)\Vert_F^2\right] \\
&\cdot  \frac{1}{p} \left[ 4\Vert \mathcal{P}_{\underline{\Omega}}(\pmb{W}(\overline{\pmb{Z}})^\tp)\Vert_F^2+4\Vert \mathcal{P}_{\underline{\Omega}}(\pmb{W}\pmb{H}^\tp)\Vert_F^2\right]
+5\lambda^2 \Vert \pmb{D}\pmb{Z}\pmb{Z}^\tp \pmb{D}\pmb{Z}\Vert_F^2,
\end{align*}
where  step $(i)$ follows from $(a+b)^2 \leq 2(a^2+b^2)$ and $\Vert \pmb{W}\Vert_F^2 = 1$.
Based on Lemmas \ref{l_8} and \ref{l_7} with $p \geq 262144C_0^2\mu^2r^2\kappa^2/n$, 
\begin{align}
\vert \langle\nabla  \mL_1(\pmb{Z}), \pmb{W}\rangle\vert^2
\leq& 5\left( 4n\Vert \pmb{H}\Vert_F^2\Vert \overline{\pmb{Z}}\Vert_{2,\infty}^2 + 2\Vert \pmb{H}\Vert_F^4 + \frac{1}{16}\sigma_r^*\Vert \pmb{H}\Vert_F^2\right) \nonumber\\
&\cdot \left( 4n\Vert \pmb{W}\Vert_F^2\Vert \overline{\pmb{Z}}\Vert_{2,\infty}^2+4n\Vert \pmb{W}\Vert_F^2\Vert \pmb{H}\Vert_{2,\infty}^2\right) 
+ 5\lambda^2 \Vert \pmb{D}\pmb{Z}\pmb{Z}^\tp \pmb{D}\pmb{Z}\Vert_F^2\nonumber\\
\overset{(ii)}{\leq}& \left( 20\mu r\sigma_1^*+ \frac{10}{16}\sigma_r^* + \frac{5}{16}\sigma_r^*\right)\cdot \Vert \pmb{H}\Vert_F^2 \cdot \left( 68\mu r\sigma_1^*\right) 
+ 5\lambda^2 \Vert \pmb{D}\pmb{Z}\pmb{Z}^\tp \pmb{D}\pmb{Z}\Vert_F^2,\label{lsc1}
\end{align}
where the inequality $(ii)$ holds because of incoherence condition, $\Vert \pmb{H}\Vert_F \leq \frac{1}{4}\sqrt{\sigma_r^*}$ and $\Vert \pmb{H}\Vert_{2,\infty}\leq 4\sqrt{\frac{\mu r}{n}\sigma_1^*}$ in \eqref{H_2infty}.

As for $\Vert \pmb{D}\pmb{Z}\pmb{Z}^\tp \pmb{D}\pmb{Z}\Vert_F^2$, we utilize the basic inequality to obtain
\begin{align*}
\Vert \pmb{D}\pmb{Z}\pmb{Z}^\tp \pmb{D}\pmb{Z}\Vert_F^2
\leq& 2\Vert \pmb{D}(\pmb{Z}\pmb{Z}^\tp -\overline{\pmb{Z}}(\overline{\pmb{Z}})^\tp)\pmb{D}\pmb{Z}\Vert_F^2 + 2\Vert \pmb{D}\overline{\pmb{Z}} (\overline{\pmb{Z}})^\tp \pmb{D}\pmb{Z}\Vert_F^2\\
\leq& 2\Vert \pmb{Z}\Vert^2\cdot\Vert \pmb{Z}\pmb{Z}^\tp -\overline{\pmb{Z}}(\overline{\pmb{Z}})^\tp\Vert_F^2 + 2\Vert \overline{\pmb{Z}}\Vert^2\cdot \Vert (\overline{\pmb{Z}})^\tp \pmb{D}\pmb{Z}\Vert_F^2\\
=&  2\Vert \pmb{Z}\Vert^2\cdot\Vert \pmb{H}(\overline{\pmb{Z}})^\tp+\overline{\pmb{Z}}\pmb{H}^\tp+\pmb{H}\pmb{H}^\tp\Vert_F^2 
+ 2\Vert \overline{\pmb{Z}}\Vert^2\cdot \Vert (\overline{\pmb{Z}})^\tp \pmb{D}(\pmb{H}+\overline{\pmb{Z}})\Vert_F^2\\
\overset{(iii)}{\leq}& 6\left( \Vert \pmb{H}\pmb{H}^\tp\Vert_F^2 + \Vert \pmb{H}(\overline{\pmb{Z}})^\tp\Vert_F^2 + \Vert \overline{\pmb{Z}}\pmb{H}^\tp\Vert_F^2 \right)\cdot\Vert \pmb{Z}\Vert^2
+2\Vert \overline{\pmb{Z}}\Vert^2\cdot \Vert (\overline{\pmb{Z}})^\tp \pmb{D}\pmb{H}\Vert_F^2\\
\leq& 6\left( \Vert \pmb{H}\Vert_F^4 + 2\Vert \pmb{H}\Vert_F^2 \Vert \overline{\pmb{Z}}\Vert^2 \right) \Vert \pmb{Z}\Vert^2 + 2\Vert \overline{\pmb{Z}}\Vert^2 \Vert (\overline{\pmb{Z}})^\tp \pmb{D}\pmb{H}\Vert_F^2.
\end{align*}
Here, the inequality $(iii)$ comes from basic inequality $(a+b+c)^2\leq3(a^2+b^2+c^2)$ and $(\overline{\pmb{Z}})^\tp \pmb{D}\overline{\pmb{Z}} =\pmb{0}$. Since $\Vert \overline{\pmb{Z}}\Vert = \sqrt{2\sigma_1^*}$, $\Vert \pmb{H}\Vert_F \leq \frac{1}{4}\sqrt{\sigma_r^*}$ and $\Vert \pmb{Z}\Vert \leq \Vert  \pmb{Z}-  \overline{\pmb{Z}}\Vert + \Vert \overline{\pmb{Z}}\Vert \leq \frac{1}{4}\sqrt{\sigma_r^*} + \sqrt{2\sigma_1^*}\leq\frac{7}{4}\sqrt{\sigma_1^*}$, it goes that
\begin{align}
\Vert \pmb{D}\pmb{Z}\pmb{Z}^\tp \pmb{D}\pmb{Z}\Vert_F^2  \leq& 6\left( \frac{1}{16}\sigma_r^* + 4\sigma_1^*\right)\cdot\left( \frac{7}{4}\sqrt{\sigma_1^*}\right)^2\cdot\Vert \pmb{H}\Vert_F^2 
+ 4\sigma_1^*\Vert (\overline{\pmb{Z}})^\tp \pmb{D}\pmb{H}\Vert_F^2. \label{lsc2}
\end{align}
Thus, combining (\ref{lsc1}), (\ref{lsc2}) and setting $\lambda = \frac{1}{2}$, we have
\begin{align}
\Vert \nabla  \mL_1(\pmb{Z})\Vert_F^2 
\leq& \left( 20\mu r\sigma_1^* + \frac{10}{16}\sigma_r^* + \frac{5}{16}\sigma_r^*\right) \cdot \left( 68\mu r\sigma_1^*\right) \cdot \Vert \pmb{H}\Vert_F^2 \nonumber\\
&+ 5\lambda^2 \Bigg[ 6\left( \frac{1}{16}\sigma_r^* + 4\sigma_1^*\right)\cdot\left( \frac{7}{4}\sqrt{\sigma_1^*}\right)^2\cdot\Vert \pmb{H}\Vert_F^2 
 + 4\sigma_1^*\Vert (\overline{\pmb{Z}})^\tp \pmb{D}\pmb{H}\Vert_F^2\Bigg]\nonumber\\
\leq& 1524\mu^2r^2\sigma_1^*{}^2\Vert \pmb{H}\Vert_F^2 + 5\sigma_1^*\Vert (\overline{\pmb{Z}})^\tp \pmb{D}\pmb{H}\Vert_F^2.\label{lsc3}
\end{align}
\end{proof}

\subsection*{A.3 Convergent Analysis}
\begin{proof}[Proof of Theorem \ref{t_1}]
Let $\pmb{H}^k = \pmb{Z}^k - \overline{\pmb{Z}}{}^k$ be the residual error. Based on the iteration
\begin{align*}
\pmb{Z}^{k+1} =  \mathcal{P}_{\mathcal{C}_1}\left(\pmb{Z}^k - \frac{\eta}{\Vert \pmb{Z}^*\Vert^2}\nabla  \mL_1(\pmb{Z}^k)\right),
\end{align*}
we have
\begin{align*}
\Vert \pmb{Z}^{k+1} - \overline{\pmb{Z}}{}^{k+1}\Vert_F^2 \leq& \Vert \pmb{Z}^{k+1} - \overline{\pmb{Z}}{}^{k}\Vert_F^2
 = \left\Vert  \mathcal{P}_{\mathcal{C}_1}\left(\pmb{Z}^k - \frac{\eta}{\Vert \pmb{Z}^*\Vert^2}\nabla  \mL_1(\pmb{Z}^k)\right) - \overline{\pmb{Z}}{}^{k} \right\Vert_F^2 \\
\leq& \left\Vert \pmb{Z}^k - \frac{\eta}{\Vert \pmb{Z}^*\Vert^2}\nabla  \mL_1(\pmb{Z}^k)- \overline{\pmb{Z}}{}^{k} \right\Vert_F^2
\end{align*}
for the utilization of Lemma \ref{l_3} and the projection $\mathcal{P}_{\mathcal{C}_1}$ is row-wise clipping. 
Based on Proposition \ref{p_1} and Proposition \ref{p_2} and by induction, we can know that the projection satisfies non-expansiveness property since $\dist(\pZ^0, \pZ^*) \leq \frac{1}{4}\sqrt{\sigma_r^*}$.
Then, it is easy to find that
\begin{align*}
\dist^2 (\pmb{Z}^{k+1}, \pmb{Z}^*) 
\leq& \left\Vert \pmb{Z}^k - \frac{\eta}{\Vert \pmb{Z}^*\Vert^2}\nabla  \mL_1(\pmb{Z}^k)- \overline{\pmb{Z}}{}^{k} \right\Vert_F^2\\
=& \Vert \pmb{H}^k\Vert_F^2 + \frac{\eta^2}{4\sigma_1^*{}^2}\Vert \nabla  \mL_1(\pmb{Z}^k) \Vert_F^2 - 2\langle \pmb{H}^k, \frac{\eta}{2\sigma_1^*}\nabla  \mL_1(\pmb{Z}^k) \rangle\\
\overset{(i)}{\leq}& \Vert \pmb{H}^k\Vert_F^2 + \frac{\eta^2}{4\sigma_1^*{}^2}\Vert \nabla  \mL_1(\pmb{Z}^k) \Vert_F^2 
- \frac{\eta}{\sigma_1^*}\left(\alpha \sigma^*_r\Vert\pmb{H}^k \Vert_F^2 + \beta \frac{\Vert \nabla  \mL_1(\pmb{Z}^k)\Vert_F^2}{\sigma_1^*}\right)\\
=& \left(1- \frac{\alpha \eta \sigma_r^*}{\sigma_1^*}\right) \Vert \pmb{H}^k\Vert_F^2 +  \left(\frac{\eta^2}{4\sigma_1^*{}^2} - \frac{\beta\eta}{\sigma_1^*{}^2}\right)\Vert \nabla  \mL_1(\pmb{Z}^k) \Vert_F^2\\
=&  \left(1-\frac{\alpha\eta}{\kappa}\right) \Vert \pmb{H}^k\Vert_F^2 + \frac{\eta}{\sigma_1^*{}^2} \left(\frac{1}{4}\eta - \beta\right)\Vert \nabla  \mL_1(\pmb{Z}^k) \Vert_F^2\\
\overset{(ii)}{\leq}& \left(1-\frac{\alpha\eta}{\kappa}\right) \Vert \pmb{H}^k\Vert_F^2,
\end{align*}
where the inequality (i) holds for (\ref{lsc5}) by setting $\alpha = \frac{1}{8}$, $\beta = \frac{1}{6096\mu^2 r^2 \kappa}$, the inequality (ii) holds for $\eta < \min \{\frac{1}{\alpha}, 4\beta \}$.\\
\indent Thus, the $k$-th step of Algorithm \ref{alg:pgd} satisfies
\begin{align*}
\dist(\pmb{Z}^k, \pmb{Z}^*) 
\leq& \sqrt{1-\frac{\eta}{8\kappa}} \dist(\pmb{Z}^{k-1}, \pmb{Z}^*) = (1-\frac{\eta}{8\kappa})^{\frac{k}{2}}\dist(\pmb{Z}^0, \pmb{Z}^*)  \\
\leq& \frac{1}{4}(1-\frac{\eta}{8\kappa})^{\frac{k}{2}} \sqrt{\sigma_r^*}
\end{align*}
with $\lambda = \frac{1}{2}$ in the model (\ref{lo2}) and $\eta \leq \frac{1}{1524\mu^2 r^2 \kappa}$.

Setting $\pH^k_{\pX} = \pX^k - \pX^*\pR$ and $\pH^k_{\pY} = \pY^k - \pY^*\pR$, 
\begin{align*}
\| \pX^k (\pY^k)^\tp - \pM^* \|_F 
\leq& \| \pH^k_{\pX}(\pH^k_{\pY})^\tp \|_F + \| \pH_{\pX}^k\pR^\tp(\pY^*)^\tp \|_F + \|  \pX^*\pR(\pH^k_{\pY})^\tp \|_F.
\end{align*}
Based on Theorem \ref{1}, 
\begin{align*}
\| \pH_{\pX}^k\pR^\tp(\pY^*)^\tp \|_F \leq \| \pY^*\|  \| \pR\|_F  \| \pH_{\pX}^k \|_F \leq \sqrt{\sigma_1^*} \| \pH_{\pX}^k \|_F
\end{align*}
because of $\| \pY^*\|= \| \pV^* (\pSig^*)^{\frac{1}{2}} \| \leq \sqrt{\sigma_1^*}\| \pV^*\| = \sqrt{\sigma_1^*}$. Similarly, we also get $\|  \pX^*\pR(\pH^k_{\pY})^\tp \|_F \leq \sqrt{\sigma_1^*} \| \pH_{\pY}^k \|_F$.

Meanwhile, 
\begin{align*}
\| \pH^k_{\pX}(\pH^k_{\pY})^\tp \|_F =& \frac{1}{2}\| \pH^k_{\pX}(\pH^k_{\pY})^\tp \|_F + \frac{1}{2}\| \pH^k_{\pX}(\pH^k_{\pY})^\tp \|_F \\
\leq& \frac{1}{8}\sqrt{\sigma_r^*} (\| \pH^k_{\pX}\|_F + \| \pH^k_{\pY}\|_F).
\end{align*}
Hence,
\begin{align*}
\| \pX^k (\pY^k)^\tp - \pM^* \|_F \leq& (\sqrt{\sigma_1^*} + \frac{1}{8}\sqrt{\sigma_r^*})(\| \pH^k_{\pX}\|_F + \| \pH^k_{\pY}\|_F)\\
\leq& \sqrt{2}(\sqrt{\sigma_1^*} + \frac{1}{8}\sqrt{\sigma_r^*}) \dist(\pZ^k, \pZ^*),
\end{align*}
 which completes the main theorem.
\end{proof}

\end{document}